\newtheorem{theorem}{Theorem}[section]
\newtheorem{lemma}[theorem]{Lemma}
\newtheorem{corollary}[theorem]{Corollary}
\newtheorem{definition}[theorem]{Definition}
\newtheorem{remark}[theorem]{\it Remark}
\newtheorem{proposition}[theorem]{Proposition}
\newtheorem{conjecture}[theorem]{Conjecture}
\def\SL{\mathrm{SL}}
\def\C{\mathbb{C}}
\def\R{\mathbb{R}}
\def\Z{\mathbb{Z}}
\def\Z{\mathbb{Z}}
\def\tree{\mathcal{T}}
\def\Pncc{\includegraphics[bb = 0 17.5 40 0, scale = .65]{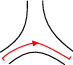}} 
\def\Pnck{\includegraphics[bb = 0 17.5 40 0, scale = .65]{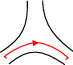}} 
\def\Pnkc{\includegraphics[bb = 0 17.5 40 0, scale = .65]{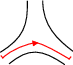}} 
\def\Pnkk{\includegraphics[bb = 0 17.5 40 0, scale = .65]{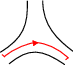}}
\def\Pcnc{\includegraphics[bb = 0 17.5 40 0, scale = .65]{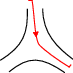}} 
\def\Pcnk{\includegraphics[bb = 0 17.5 40 0, scale = .65]{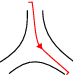}} 
\def\Pknc{\includegraphics[bb = 0 17.5 40 0, scale = .65]{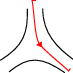}} 
\def\Pknk{\includegraphics[bb = 0 17.5 40 0, scale = .65]{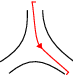}}
\def\Pccn{\includegraphics[bb = 0 17.5 40 0, scale = .65]{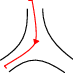}} 
\def\Pckn{\includegraphics[bb = 0 17.5 40 0, scale = .65]{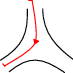}} 
\def\Pkcn{\includegraphics[bb = 0 17.5 40 0, scale = .65]{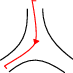}} 
\def\Pkkn{\includegraphics[bb = 0 17.5 40 0, scale = .65]{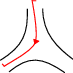}}
\def\tri{\includegraphics[bb = 0 17.5 40 0, scale = .65]{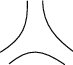}}
\title{Compactifications of character varieties \\and skein relations on conformal blocks}
\author{Christopher Manon}
\thanks{}
\begin{document}

\begin{abstract}
Let $M_C(G)$ be the moduli space of semistable principal $G-$bundles over a smooth curve $C$.  We show that a flat degeneration of this space $M_{C_{\Gamma}}(G)$ associated to a singular stable curve $C_{\Gamma}$ contains the free group character variety $\mathcal{X}(F_g, G)$ as a dense, open subset, where $g = genus(C).$  In the case $G = SL_2(\C)$ we describe the resulting compactification explicitly, and in turn we conclude that the coordinate ring of $M_{C_{\Gamma}}(SL_2(\C))$ is presented by homogeneous skein relations.  Along the way, we prove the parabolic version of these results over stable, marked curves $(C_{\Gamma}, \vec{p}_{\Gamma})$.
\end{abstract} 

\maketitle

\smallskip
\noindent

\section{Introduction}

We explore a relationship between the moduli space $M_C(G)$ of semistable principal $G$-bundles on a stable projective curve $C$, 
and the character variety $\mathcal{X}(F_g, G)$ of the free group on $g = genus(C)$ generators. As the curve $C$ varies in the moduli of smooth curves $\mathcal{M}_g,$ the spaces $M_C(G)$ form a flat family of projective schemes, and this family can be extended (\cite{M4}) to a flat family on the Deligne-Mumford compactification $\bar{\mathcal{M}}_g.$ We show that the fiber $M_{C_{\Gamma}}(G)$ over a maximally singular stable curve $C_{\Gamma}$ contains $\mathcal{X}(F_g, G)$ as a dense open subspace.  Our methods are algebraic and combinatorial, and have as their centerpiece a relationship between the coordinate ring $\C[\mathcal{X}(F_g, G)]$ and a degenerated algebra of non-Abelian theta functions $V_{C_{\Gamma}}(G),$ which serves as the projective coordinate ring of $M_{C_{\Gamma}}(G).$ 

\begin{theorem}\label{maing0}
For $G$ a simple, simply connected complex group, the algebra $V_{C_{\Gamma}}(G)$ is a Rees algebra of $\C[\mathcal{X}(F_g, G)].$ The character variety $\mathcal{X}(F_g, G)$ is a dense, open subspace of $M_{C_{\Gamma}}(G)$. 
\end{theorem}

 Narasimhan and Seshadri introduced character varieties into the study of semistable principal bundles in \cite{NS}, where they show that $M_C(G)$ is homeomorphic to $\mathcal{X}(\pi_1(C), K)$ for $K \subset G$ a maximal compact subgroup.  In contrast, we work with a free group character variety for the complex group $G$, similar to Florentino's work \cite{F} on Schottky uniformization.  Florentino defines a natural map between $\mathcal{X}(F_g, G)$ and $M_C(G)$ for $C$ smooth $G = GL_n(\C)$, and studies where this map is a submersion.  Theorem \ref{maing0} is an analogue of this result for singular curves, and echoes the principle that structures in the moduli of principal bundles on curves simplify when degenerated to the stable boundary of $\mathcal{M}_g$.  Degeneration techniques have been used in the study of vector bundles and principal bundles at least since the work of Gieseker \cite{Gi}, in particular Abe \cite{A} and the author \cite{M4}, \cite{M10} have used these methods to establish structural properties of the coordinate ring of $M_C(G)$.

We begin with a description of our results in the case $G = SL_2(\C).$  The coordinate ring of $\mathcal{X}(F_g, SL_2(\C))$ has two interesting
combinatorial structures: it is presented by skein relations on a trivalent ribbon graph $\Gamma$ with $\beta_1(\Gamma) = g$, and it has a basis of spin diagram functions, represented by integer labellings of the edges of $\Gamma.$  We focus for now on skein relations, and direct the reader to Section \ref{sl2theory} for their construction.

\begin{figure}[htbp]
\centering
\includegraphics[scale = 0.5]{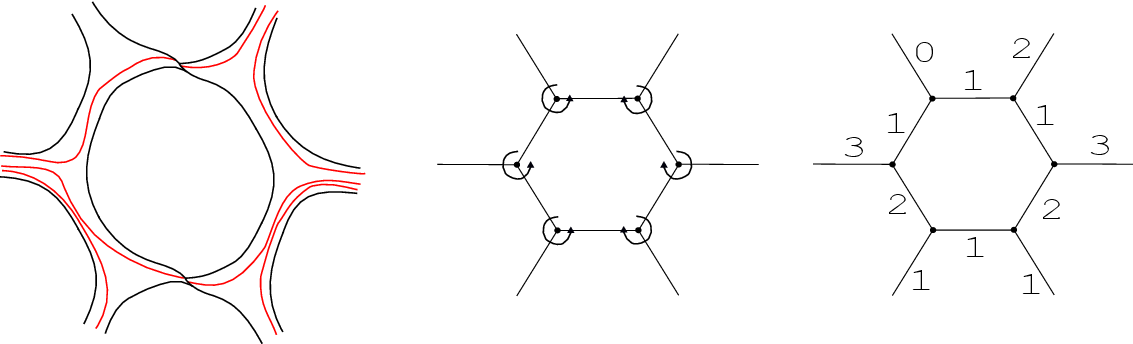}
\caption{Left: Paths on a ribbon graph. Right: A spin diagram on the underlying graph. }
\label{ribwire}
\end{figure}

  A ribbon structure on a graph $\Gamma$ is an assignment of a cyclic ordering to the edges in the link of each vertex, this is sufficient combinatorial data to build an orientable surface from $\Gamma$, see Figure \ref{ribwire}. Roughly, the skein algebra associated to $\Gamma$ is a vector space spanned by arrangements of paths inside the thickened graph up to isotopy equivalence.
Multiplication in the skein algebra is computed by taking unions of these arrangements in general position, and resolving crossings with the skein relations. 

\begin{figure}[htbp]
\centering
\includegraphics[scale = 0.7]{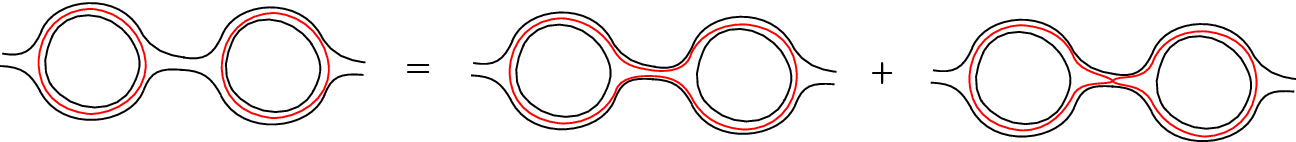}
\caption{the skein relation $tr(M)tr(N) = tr(MN) + tr(MN^{-1}).$}
\label{ribwire2}
\end{figure} 

Skein relations are appealing in part because they present equations from linear algebra in a graphical way, for example Figure \ref{ribwire2} represents a relation on traces of two $2\times 2$ matrices.   We let  $F_{\Gamma}(L) \subset \C[\mathcal{X}(F_g, SL_2(\C))]$ be the subspace spanned by elements with $\leq L$ paths through each vertex $v \in V(\Gamma).$  Each of these spaces is finite dimensional, and they form an increasing filtration on the algebra $\C[\mathcal{X}(F_g, SL_2(\C))]$ with an associated Rees algebra $R_{\Gamma}(\C[\mathcal{X}(F_g, SL_2(\C))])$ $=\bigoplus_{L \geq 0} F_{\Gamma}(L)$.

 Recall that the stack $\bar{\mathcal{M}}_g$ of semistable curves of genus $g$ is stratified by combinatorial types of arrangements of nodal singularities, and that the lowest strata are isolated points $C_{\Gamma}$ indexed by connected trivalent graphs $\Gamma$ with no leaves and $\beta_1(\Gamma) = g.$ The following theorem is a consequence of Theorem \ref{maing0}, and places the Rees algebra $R_{\Gamma}(\C[\mathcal{X}(F_g, SL_2(\C))])$ in the theory of semistable $SL_2(\C)$ principal bundles. 

\begin{figure}[htbp]
\centering
\includegraphics[scale = 0.35]{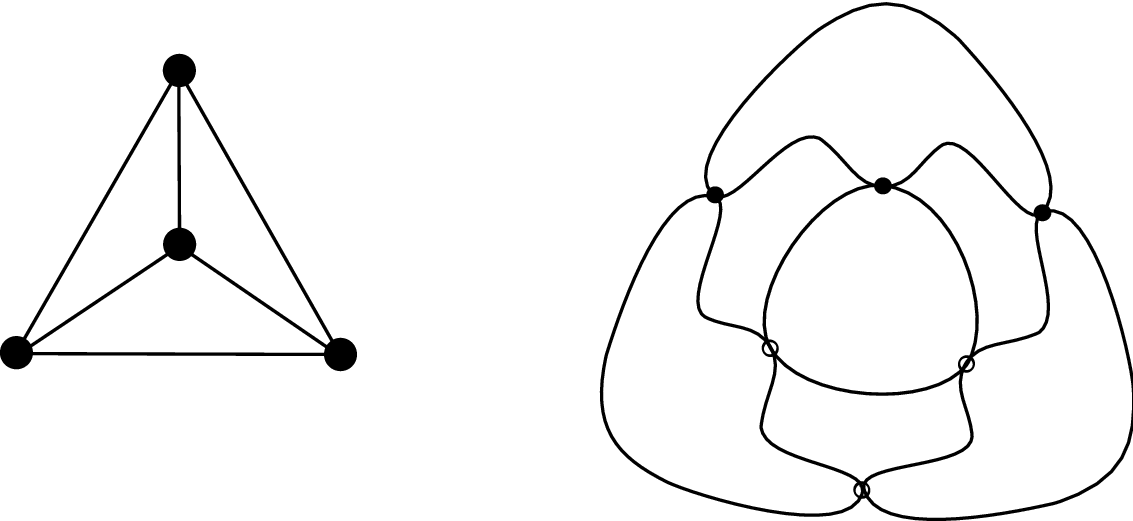}
\caption{A stable curve with graph}
\label{fig:1}
\end{figure} 

\begin{theorem}\label{maingeo}
The Rees algebra $R_{\Gamma}(\C[\mathcal{X}(F_g, SL_2(\C))])$ is isomorphic to the projective coordinate ring $V_{C_{\Gamma}}(SL_2(\C))$ of $M_{C_{\Gamma}}(SL_2(\C))$.  As a consequence, each $M_{C_{\Gamma}}(SL_2(\C))$ is a compactification of $\mathcal{X}(F_g, SL_2(\C)).$ 
\end{theorem}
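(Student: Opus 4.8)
The plan is to prove the two graded algebras isomorphic by exhibiting a common combinatorial basis and then matching their multiplication rules. On the skein side, the graded piece $F_\Gamma(L)$ of the Rees algebra is spanned by spin diagrams with at most $L$ paths through each vertex. At a trivalent vertex whose three incident edges carry $a,b,c$ strands, the arcs within the vertex-disk pair up the $a+b+c$ endpoints, so the number of paths through the vertex is $(a+b+c)/2$; this is an integer precisely when the parity condition holds, and the arc counts are nonnegative precisely when the triangle inequalities hold. Thus a basis of $F_\Gamma(L)$ is indexed by admissible edge-weightings $\mathbf{w}\colon E(\Gamma) \to \Z_{\geq 0}$, meaning weightings satisfying at each vertex the triangle inequalities, the parity condition that $a+b+c$ is even, and the bound $a+b+c \leq 2L$. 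The first task is to produce exactly this index set on the conformal blocks side.

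For this I would invoke the factorization rules of the WZNW model. Because $C_\Gamma$ is totally degenerate---every component is a $\mathbb{P}^1$ meeting the rest of the curve in three nodes, by trivalence of $\Gamma$---the level-$L$ conformal block space $V_{C_\Gamma}(L)$ decomposes as a direct sum over edge-weightings $\mathbf{w}$ of tensor products, over the vertices $v$, of the three-point blocks on $\mathbb{P}^1$ for the weights of the edges incident to $v$. For $SL_2(\C)$ at level $L$ each such three-point space is one-dimensional exactly when the incident weights are admissible (the same triangle, parity, and level conditions above) and is zero otherwise. Hence $V_{C_\Gamma}(L)$ has a basis indexed by the very same admissible weightings, and the resulting bijection of bases is a graded vector-space isomorphism $R_\Gamma \cong V_{C_\Gamma}(SL_2(\C))$ matching the $L$-grading of the Rees algebra with the level grading of conformal blocks.

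The substance of the theorem is promoting this bijection to an algebra isomorphism, and this is the step I expect to be the main obstacle. The product in $R_\Gamma$ is the skein product (superimpose two diagrams in general position and resolve crossings by the relation $tr(M)tr(N) = tr(MN) + tr(MN^{-1})$, homogenized with respect to the $L$-grading), while the product in $V_{C_\Gamma}(SL_2(\C))$ is multiplication of sections. To compare them I would pass to associated graded algebras with respect to a term order on edge-weightings: each skein resolution has a well-defined leading weighting, and these leading terms generate the affine semigroup $S_\Gamma$ of pairs $(\mathbf{w},L)$ with $\mathbf{w}$ admissible and every vertex-sum $\leq 2L$. One then shows $\gr R_\Gamma \cong \C[S_\Gamma]$ and, using the same factorization together with the standard toric degeneration of conformal blocks, $\gr V_{C_\Gamma}(SL_2(\C)) \cong \C[S_\Gamma]$ as well. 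Since both Hilbert functions equal the number of lattice points of $S_\Gamma$ in degree $L$, and the two degenerations are to the identical semigroup ring under the basis bijection, the bijection lifts to an isomorphism of the original algebras; in particular the homogenized skein relations become a presentation of $V_{C_\Gamma}(SL_2(\C))$.

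Finally I would deduce the compactification statement from the Rees-algebra formalism. The Rees algebra carries a distinguished degree-one element $t$, the image of $1 \in F_\Gamma(0) \subset F_\Gamma(1)$, and because the filtration is exhaustive one has $R_\Gamma/(t-1) \cong \C[\mathcal{X}(F_g, SL_2(\C))]$; equivalently the degree-zero part of the localization $R_\Gamma[t^{-1}]$ is this coordinate ring. Transporting $t$ across the isomorphism to a section $s \in V_{C_\Gamma}(1)$, the distinguished affine open $\{s \neq 0\} \subset \Proj V_{C_\Gamma}(SL_2(\C)) = M_{C_\Gamma}(SL_2(\C))$ is exactly $\mathrm{Spec}\,\C[\mathcal{X}(F_g, SL_2(\C))] = \mathcal{X}(F_g, SL_2(\C))$, with complement the divisor $\{s = 0\}$. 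Since $M_{C_\Gamma}(SL_2(\C))$ is projective, it is therefore a compactification of the character variety.
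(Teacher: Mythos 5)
Your first step (matching bases via the $\widehat{sl_2}$ fusion rules and the path-count of $(a+b+c)/2$ at each trivalent vertex) and your last step (recovering $\mathcal{X}(F_g,SL_2(\C))$ as the complement of the divisor $\{s=0\}$ via $R_\Gamma/(t-1)$) are both sound and agree with what the paper does. The gap is exactly where you anticipated it: the passage from a graded vector-space bijection to an algebra isomorphism. Knowing that both algebras have associated graded algebra $\C[S_\Gamma]$ and equal Hilbert functions does not let you conclude they are isomorphic, and in particular does not let you conclude that your chosen basis bijection respects multiplication. Two filtered algebras with a common toric degeneration are merely two fibers in (possibly different) flat families over $\mathbb{A}^1$ with the same special fiber; nothing forces the general fibers to agree, and a linear map $\phi$ with $\phi(xy)-\phi(x)\phi(y)$ of strictly lower weight need not be multiplicative. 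So the sentence ``the bijection lifts to an isomorphism of the original algebras'' is asserting the theorem rather than proving it.

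The paper closes this gap by never comparing two separately-defined multiplications at all: it constructs a single injective \emph{algebra} homomorphism from the conformal-block side into $\C[\mathcal{X}(F_g,SL_2(\C))]\otimes\C[t]$ and then identifies its image. Concretely, multiplication on the algebra of conformal blocks is defined by dualizing the maps $q_{\eta}^{\alpha,\beta}$, which are by construction dual to multiplication in $\C[G]$; this makes Beauville's correlation map an algebra homomorphism in genus $0$ (Proposition \ref{g0rees}), and the factorization isomorphism is likewise checked to intertwine the multiplication maps (Proposition \ref{ringfact}), so the genus-$0$ correlation maps glue to $\kappa_{\Gamma}: V_{C_{\Gamma}}(SL_2(\C)) \to \C[\mathcal{X}(F_g,SL_2(\C))]\otimes\C[t]$ (Proposition \ref{gcorrelate}). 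Only after that does the combinatorics you describe enter, to show that the image of the level-$L$ piece is spanned by the $\Gamma$-tensors with at most $L$ paths through each vertex (Proposition \ref{levelbasis}), i.e.\ that the induced filtration is $F_\Gamma$. If you want to salvage your route, you would need to replace the associated-graded comparison with an actual construction of the map on algebras --- which is the correlation map --- or else prove a rigidity statement showing that the filtered algebra structure is determined by its degeneration, and no such statement is available here.
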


\begin{corollary}\label{present}
The algebra $V_{C_{\Gamma}}(SL_2(\C))$ is presented by homogeneous skein relations. 
\end{corollary}

Our approach to Theorem \ref{maing0} is to construct $\mathcal{X}(F_g, G)$ and $M_{C_{\Gamma}}(G)$ in a parallel way using $GIT$ and a recipe derived from the graph $\Gamma$, taking advantage of representation theoretic structures in the coordinate rings of both $\mathcal{X}(F_g, G)$ and $M_{C_{\Gamma}}(G)$.   The graded components $V_C(L)$ of the projective coordinate ring $V_C(G)$ of $M_C(G)$ are called non-Abelian theta functions, and they are known to coincide with the spaces of conformal blocks associated to the Wess-Zumino-Novikov-Witten (WZNW) model of conformal field theory on $C$ with respect to the Lie algebra $\mathfrak{g} = Lie(G)$.    The WZNW theory assigns a finite dimensional vector space $V_{C, \vec{p}}(\vec{\lambda}, L)$  to each stable, marked curve $(C, \vec{p})$ in the Deligne-Mumford stack of stable curves $\bar{\mathcal{M}}_{g, n}$ for every non-negative integer $L$ and tuple $(\lambda_1, \ldots, \lambda_n) = \vec{\lambda}$ of integral dominant weights chosen from the level $L$ alcove $\Delta_L \subset \Delta$ (see Section \ref{factorcor} below) in a Weyl chamber of $\mathfrak{g}.$  Results of Kumar, Narasimhan, and Ramanathan \cite{KNR}, Faltings \cite{Fal}, Beauville and Laszlo \cite{BL}, and Pauly \cite{P} ( in the parabolic case) identify these spaces with the spaces of global sections of line bundles on moduli spaces of parabolic principal $G$-bundles on the curves $(C, \vec{p})$. Conformal blocks come with the following properties, proved by Tsuchiya, Ueno, and Yamada in \cite{TUY}.

\begin{enumerate}
\item (Flatness) The spaces $V_{C, \vec{p}}(\vec{\lambda}, L)$ form a vector bundle $V(\vec{\lambda}, L)$ over $\bar{\mathcal{M}}_{g, n}.$\\
\item (Correlation) The space $V_{\mathbb{P}^1, \vec{p}}(\vec{\lambda}, L)$ can be realized as a subspace of the space of invariants in the $n-$fold tensor product of irreducible representations $(V(\lambda_1)\otimes \ldots \otimes V(\lambda_n))^{\mathfrak{g}}.$\\
\item (Factorization) For a stable curve $C$ with nodal singularity $q \in C$ and partial normalization $(\tilde{C}, q_1, q_2)$ with induced marked points $q_1, q_2,$  $$V_{C, \vec{p}}(\vec{\lambda}, L) = \bigoplus_{\alpha \in \Delta_L} V_{\tilde{C}, \vec{p}, q_1, q_2}(\vec{\lambda}, \alpha, \alpha^*, L)$$.\\
\end{enumerate}

\begin{figure}[htbp]
\centering
\includegraphics[scale = 0.35]{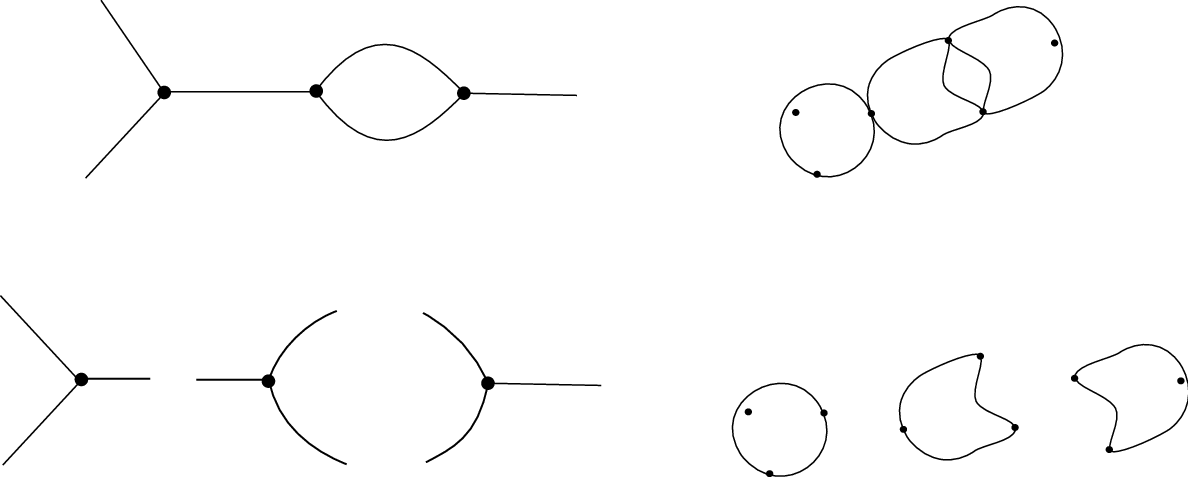}
\caption{Normalization of a stable curve}
\label{fig:2}
\end{figure}

Mimicking the factorization property (3) above, we try to build the spaces  $M_{C_{\Gamma}}(G)$ out of the moduli spaces $M_{C, \vec{p}}(\vec{\lambda}, L)$  of parabolic semistable principal bundles on $(C, \vec{p}) \in \bar{\mathcal{M}}_{g, n}$ following ideas of Hurtubise, Jeffrey, Sjamaar, \cite{HJS}, and Jeffrey, Weitsman \cite{JW}.   By decomposing a stable curve $C$ into its marked irreducible components $(C_i, \vec{q}_i)$, one attempts to reconstruct a point in $M_{C_{\Gamma}}(G)$ as a collection of points on parabolic moduli over the component curves with compatible parabolic data at the induced marked points.  However, the parabolic data appearing at a gluing point for general bundles in $M_{C_{\Gamma}}(G)$  is certainly not limited to a single choice of $\mathfrak{g}-$weight. Indeed, in the factorization statement $(3)$ above, one requires the sum over $\alpha \in \Delta_L$, as opposed to a single $\alpha$ to do the job.   For this reason, we require a space associated to each $C_i, \vec{q}_i$ which makes all possible parabolic data available.  A space which ''sees" all of this data is studied in \cite{M4}, the spectrum $\bar{K}_{C_i, \vec{q}_i}(G)$ of the total coordinate ring $V_{C_i, \vec{q}_i}(G)$ of the moduli stack $\mathcal{M}_{C_i, \vec{q}_i}(G)$ of quasi-parabolic principal bundles on $C_i, \vec{q}_i$.  This space is more flexible in two ways, first any of the parabolic moduli can be obtained as a $T^n\times \C^*$ $GIT$ quotient from $\bar{K}_{C, \vec{p}}(G),$ where $T \subset G$ is a maximal torus. 

\begin{equation}
\bar{K}_{C, \vec{p}}(G)/_{\vec{\lambda}, L} T^n\times\C^* = M_{C, \vec{p}}(\vec{\lambda}, L)\\
\end{equation}

\noindent
Second, in sympathy with the factorization property, for a stable curve $(C, \vec{p})$, with normalization $\coprod (C_i, \vec{p}_i, \vec{q}_i)$ the scheme $\bar{K}_{C, \vec{p}}(G)$ is $almost$ a torus $GIT$ quotient of the product $\prod \bar{K}_{C_i, \vec{p}_i, \vec{q}_i}(G),$ as shown in \cite{M4}, these two spaces are related by a flat degeneration. Here $m$ is the number of pairs of points $q_1, q_2$ introduced by the normalization.

\begin{equation}
\bar{K}_{C, \vec{p}}(G) \Rightarrow [\prod \bar{K}_{C_i, \vec{p}_i, \vec{q}_i}(G)]/ (T \times \C^*)^m\\
\end{equation}

  While this degeneration statement is an approximation of the property we need, it is too weak for our purposes. We pass to another ''universal" space $B_{C, \vec{p}}(G)$, and its affine cone $\bar{B}_{C, \vec{p}}(G)$, constructed in Section \ref{algebra}.  Mirroring $\bar{K}_{C, \vec{p}}(G)$, any moduli space of parabolic bundles can be obtained from $\bar{B}_{C, \vec{p}}(G)$ by an extended $G^n \times \C^*$ GIT quotient.  Below $\mathcal{O}(\lambda)$ denotes the flag variety of $G$ obtained as the orbit through the highest weight vector $[v_{\lambda^*}] \in \mathbb{P}(V(\lambda^*))$. 

\begin{equation}
\bar{B}_{C, \vec{p}}(G) \times \mathcal{O}(\lambda_1)\times \ldots \times \mathcal{O}(\lambda_n)/_{\mathcal{L}(\vec{\lambda})}G^n \times \C^* = M_{C, \vec{p}}(\vec{\lambda}, L)\\
\end{equation}

This quotient is taken with respect to the linearization $\mathcal{L}(\vec{\lambda})$ defined by the trivial bundle on $\bar{B}_{C, \vec{p}}(G)$
and the $G-$linearized line bundles $\mathcal{L}(\lambda_i)$ on the $\mathcal{O}(\lambda_i)$, obtained as the pullbacks of $\mathcal{O}(1)$ on $\mathbb{P}(V(\lambda_i^*)).$  The spaces $B_{C, \vec{p}}(G)$ still fit into a flat family over $\bar{\mathcal{M}}_{g, n}$, and the second property above becomes equality on the nose (Section \ref{stablecurvesection}, Proposition \ref{ringfact}).

\begin{theorem}\label{mainfact}
The following holds for a stable curve $(C, \vec{p})$ with normalization $\coprod (C_i, \vec{p}_i, \vec{q}_i)$. 

\begin{equation}
\bar{B}_{C, \vec{p}}(G) = [\prod \bar{B}_{C_i, \vec{p}_i, \vec{q}_i}(G)]/ (G\times \C^*)^m\\
\end{equation}

\end{theorem}

When $\vec{p} = \emptyset$, $B_{C}(G) = M_C(G)$ so we obtain a construction of $M_{C_{\Gamma}}(G)$ based around the combinatorics of the graph $\Gamma.$  In order to give the parallel construction of $\mathcal{X}(F_g, G)$, we bring in spaces $M_{g, n}(G)$ (Section \ref{mgnsection}). For $g, n$ with $2g + n \geq 3$, the space $M_{g, n}(G)$ can be assembled from copies of $M_{0, 3}(G)$ by $GIT$ quotient, in sympathy with the construction defined for Theorem \ref{mainfact}, and when $n =0$ we obtain the character variety, $M_{g, 0}(G) = \mathcal{X}(F_g, G).$   We also enrich the picture for the scheme $\bar{K}_{C, \vec{p}}(G)$ and its projectivization $K_{C, \vec{p}}(G) = Proj(V_{C, \vec{p}}(G))$ (the grading is defined by the level $L$ of the conformal blocks) by bringing in the affine $GIT$ quotients $P_{g, n}(G) = M_{g, n}(G)/U^n$, where $U \subset G$ is a maximal unipotent subgroup.  The following theorem is proved by first verifying the $0, 3$ case, and then following parallel $GIT$ constructions of $M_{g, n}(G)$ and $\bar{B}_{C_{\Gamma}, \vec{p}_{\Gamma}}(G)$ out of the $0, 3$ spaces. 

\begin{theorem}\label{maingen1}
The coordinate ring of $\bar{B}_{C_{\Gamma}, \vec{p}_{\Gamma}}(G)$ is a Rees algebra of $\C[M_{g, n}(G)]$, and $M_{g, n}(G)$ is a dense, open subscheme of $B_{C_{\Gamma}, \vec{p}_{\Gamma}}(G).$
The coordinate ring of $\bar{K}_{C_{\Gamma}, \vec{p}_{\Gamma}}(G)$ is a Rees algebra of $\C[P_{g, n}(G)]$, and $P_{g, n}(G)$ is a dense, open subscheme of $K_{C_{\Gamma}, \vec{p}_{\Gamma}}(G).$
\end{theorem}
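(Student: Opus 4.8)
The plan is to prove both statements simultaneously, reducing the general case to the trivalent building block and then propagating along the factorization of Proposition \ref{mainfact}, and finally to deduce the $\bar{K}$/$P_{g,n}(G)$ assertion from the $\bar{B}$/$M_{g,n}(G)$ assertion by a unipotent quotient. The first task is to pin down the filtration. Since $\Gamma$ is trivalent, the normalization of $C_\Gamma$ is a disjoint union of rational three-pointed components $(C_i,\vec{p}_i,\vec{q}_i)$, each giving a copy of the cone attached to $M_{0,3}(G)$. Every cone $\bar{B}_{C,\vec{p}}(G)$ is graded by the conformal-blocks level $L$, and I would transport this grading to a filtration on $\C[M_{g,n}(G)]$ under which $M_{g,n}(G)$ is recovered as the locus where the canonical degree-one element is invertible; the first statement is then precisely the assertion that the direct sum over $L$ of the level-$L$ graded piece is the Rees algebra of this filtration.

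Second, I would verify the base case $g=0$, $n=3$ directly. Using the presentation $\bar{B}_{\mathbb{P}^1,\vec{p}}(G)\times\mathcal{O}(\lambda_1)\times\cdots\times\mathcal{O}(\lambda_n)/G^n = M_{\mathbb{P}^1,\vec{p}}(\vec{\lambda},L)$ together with the correlation property, the level-$L$ piece of the coordinate ring of $\bar{B}_{\mathbb{P}^1,\vec{p}}(G)$ is a sum over the level-$L$ alcove of subspaces of the classical invariants $[V(\lambda_1)\otimes V(\lambda_2)\otimes V(\lambda_3)]^{\mathfrak{g}}$, exhausting the full invariant algebra $\C[M_{0,3}(G)]$ as $L\to\infty$. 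Matching these increasing subspaces against the path/level filtration of $\C[M_{0,3}(G)]$ identifies the cone with the Rees algebra in the base case, and the distinguished chart is $M_{0,3}(G)$, which is dense because the Rees algebra of a domain is a domain and the complementary locus is cut out by the canonical element.

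Third, and here I expect the main obstacle, I would push the Rees identification through the factorization. Proposition \ref{mainfact} gives $\bar{B}_{C_\Gamma,\vec{p}_\Gamma}(G) = [\prod_i \bar{B}_{C_i,\vec{p}_i,\vec{q}_i}(G)]/(G\times\C^*)^m$ on the nose, while the Fock--Goncharov spaces assemble $M_{g,n}(G)$ out of copies of $M_{0,3}(G)$ by the parallel quotient using one factor of $G$ per node. The essential point is that forming the Rees algebra commutes with the reductive $(G\times\C^*)^m$ quotient: since the group preserves the level grading, the invariants of the graded tensor product are the graded pieces of the invariant ring, so the base-case identification is inherited verbatim. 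The delicate step is to confirm that the extra $(\C^*)^m$ appearing at the cone level, beyond the $G^m$ used to assemble $M_{g,n}(G)$, is exactly the torus recording the Rees grading introduced at the $m$ nodes, so that the quotient neither collapses distinct levels nor introduces new invariants. Once this is in hand the first statement follows, and openness and density of $M_{g,n}(G)$ inside $B_{C_\Gamma,\vec{p}}(G)$ descend from the base case through the same quotient.

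Finally, the $\bar{K}$ and $P_{g,n}(G)$ statement follows from the first by a unipotent quotient. One has $\bar{K}_{C,\vec{p}}(G) = \mathrm{Spec}$ of the $U^n$-invariants of the coordinate ring of $\bar{B}_{C,\vec{p}}(G)$, the operation that replaces each full-flag cone by its highest-weight, torus-graded data, in exact parallel with $P_{g,n}(G) = M_{g,n}(G)/U^n$. Taking $U$-invariants of the total coordinate ring selects the level-graded highest-weight lines, and hence preserves the level grading, so the Rees identification descends. The only care needed is that $U^n$ is non-reductive, which is harmless here because the relevant rings of invariants are finitely generated (Grosshans) and the whole construction remains flat in $L$.
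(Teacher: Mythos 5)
Your proposal follows essentially the same route as the paper: the genus-zero case is established via the correlation map (Proposition \ref{g0rees}), the result is propagated to $C_\Gamma$ through algebra-level factorization (Proposition \ref{ringfact}), with the $(\C^*)$-invariants at the nodes identifying the separate level parameters $t_i$ into a single Rees parameter, and the $\bar{K}/P_{g,n}(G)$ statement is obtained by passing to $U^n$-invariants. The only point you gloss that the paper must (and does) check explicitly is that correlation is a map of \emph{algebras} --- i.e.\ that the multiplication $\mu$ on the conformal-block side, built from the maps $q_{\eta}^{\alpha,\beta}$ of $\C[G]$, is compatible with multiplication in $\C[M_{0,n}(G)]$, and that the resulting filtration is multiplicative (in the paper this comes from the nilpotent derivation $e_{\vec{p}}$ defining a valuation); without this, an exhaustive chain of level subspaces would not yet give a Rees algebra.
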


\noindent
When $n >0,$ $M_{g, n}(G) = G^{g+n-1}$, so as a consequence $B_{C_{\Gamma}, \vec{p}_{\Gamma}}(G)$ is rational when $n > 0.$ When $n = 0$ we have $K_{C_{\Gamma}}(G) = B_{C_{\Gamma}}(G) = M_{C_{\Gamma}}(G)$ and $M_{g, 0}(G) = \mathcal{X}(F_g, G)$, proving Theorem \ref{maing0}.

The proof of Theorem \ref{maingen1} in Section \ref{stablecurvesection} uses a description of conformal blocks of level $L$ as regular
functions $f \in \C[M_{g, n}(G)]$ which satisfy $v_i(f) \leq L$ for a collection of discrete valuations $v_i: \C[M_{g, n}(G)] \to \Z \cup \{-\infty\},$ which are in bijection with the vertices of $\Gamma$.  In the $SL_2(\C)$ case we use these valuations to give a combinatorial description of conformal blocks
and a stratification of $B_{C_{\Gamma}, \vec{p}_{\Gamma}}(SL_2(\C))$ and $K_{C_{\Gamma}, \vec{p}_{\Gamma}}(SL_2(\C)).$ 

In Section \ref{sl2theory} we show that the valuations $v_i$ can be computed in a straightforward way on the class of regular functions $\C[M_{g, n}(SL_2(\C))]$ associated to isotopy classes of paths in $\Gamma$ mentioned above.   For one of these functions $\tau \in \C[M_{g, n}(SL_2(\C))]$, $v_i(\tau)$ is the number of paths passing through the $i-$th vertex of $\Gamma$, so we obtain the filtration from Theorem \ref{maingeo}.  In particular the set of monomials in  isotopy classes which are planar with respect to the ribbon structure on $\Gamma$ form a basis of $V_{C_{\Gamma}, \vec{p}}(L)$. In the case $n =0$ this gives an interesting interpretation of the dimension of $V_{C_{\Gamma}}(L)$ as the set of monomials in cyclic equivalence classes of words in the free group $F_g$ which satisfy certain length conditions. 

Closed stratifications of $B_{C_{\Gamma}, \vec{p}_{\Gamma}}(SL_2(\C))$ and $K_{C_{\Gamma}, \vec{p}_{\Gamma}}(SL_2(\C))$  are obtained by intersecting the irreducible components of their boundary divisors $D_{\Gamma} = B_{C_{\Gamma}, \vec{p}_{\Gamma}}(SL_2(\C))$ $\setminus M_{g,n}(SL_2(\C))$, $E_{\Gamma} = K_{C_{\Gamma}, \vec{p}_{\Gamma}}(SL_2(\C))$ $\setminus P_{g,n}(SL_2(\C))$. 

\begin{theorem}\label{strattheorem}
The following hold for $D_{\Gamma}$ and $E_{\Gamma}$. 

\begin{enumerate}
\item The intersection of all the irreducible components in $D_{\Gamma}$ (respectively $E_{\Gamma}$) is a connected projective toric variety $D_{V(\Gamma)}$ (respectively $E_{V(\Gamma)}$).
\item If $\Gamma$ has a leaf, or is not bipartite, the stratification poset is a Boolean lattice on the set of vertices $V(\Gamma)$, and the codimension of a stratum obtained by intersecting a set of components $S$ is $|S|.$
\item If $\Gamma$ has no leaves and is bipartite, the stratification poset is the Boolean lattice on $V(\Gamma)$ modulo the lattice ideal composed of those sets $T$ with $V(\Gamma) \setminus T$ contained in one of the sets of the partition defined by the bipartite structure.  The codimension of a stratum obtained by intersecting a set of irreducible components $S$ which is not in this ideal is $|S|$.
\end{enumerate}

\end{theorem}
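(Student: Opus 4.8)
The plan is to read the whole stratification off of the multigraded Rees presentation supplied by Theorem \ref{maingen1}, reduce the geometry of each stratum to factorization via Proposition \ref{mainfact}, and then convert the combinatorics of the boundary into a single linear-algebra fact about the incidence matrix of $\Gamma$. First I would fix the grading. By Theorem \ref{maingen1} the coordinate ring of $\bar{B}_{C_\Gamma,\vec p_\Gamma}(SL_2(\C))$ is a Rees algebra of $\C[M_{g,n}(SL_2(\C))]$ for the filtration counting strands through each vertex; since this refines to one grading parameter $t_v$ per vertex, I would promote it to the multigrading by $\Z^{V(\Gamma)}$. Then $D_\Gamma=\{\prod_v t_v=0\}$, its irreducible components are the sections $D_v=\{t_v=0\}$, and this is what indexes components by $V(\Gamma)$; the homogenized skein presentation makes each $D_v$ explicit as the vanishing of the degree-one generators supported at $v$, and a stratum $D_S$ is the intersection $\bigcap_{v\in S}D_v$.

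For part (1) I would take $S=V(\Gamma)$, so $D_{V(\Gamma)}=\Proj$ of the full associated graded algebra $\gr(\C[M_{g,n}(SL_2(\C))])$. Using factorization together with the fact that for $SL_2$ each trivalent vertex contributes a conformal block space $V_{\P^1}(a,b,c,L)$ of dimension at most one, this associated graded algebra acquires a distinguished basis indexed by edge-weightings $w\colon E(\Gamma)\to\Z_{\ge0}$ obeying the quantum Clebsch--Gordan conditions at every vertex (the triangle inequalities, the parity condition $w(e_1)+w(e_2)+w(e_3)\in 2\Z$, and the level bound). Because multiplication in the associated graded algebra is additive on these labels, it is the semigroup algebra of the resulting lattice cone, so $D_{V(\Gamma)}$ is the projective toric variety of that cone; the $E_{V(\Gamma)}$ case follows from the parallel unipotent reduction relating $M_{g,n}$ to $P_{g,n}$ in Theorem \ref{maingen1}.

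For part (2) I would send only the parameters $\{t_v\}_{v\in S}$ to zero. Cutting $\Gamma$ along the edges $\mathcal L_S$ joining $S$ to its complement splits it into the subgraph $\Gamma_S$ on $S$ and the subgraph $\Gamma^S$ on $V(\Gamma)\setminus S$. On the $\Gamma_S$ side every vertex is fully degenerate, so by part (1) that factor is the toric $D_{V(\Gamma_S)}$; on the $\Gamma^S$ side nothing is degenerate, so that factor is $B_{C_{\Gamma^S},\vec p_{\Gamma^S}}(SL_2(\C))$. Applying Proposition \ref{mainfact} to the normalization along $\mathcal L_S$ should then reassemble $D_S$ on the nose as $[D_{V(\Gamma_S)}\times B_{C_{\Gamma^S},\vec p_{\Gamma^S}}(SL_2(\C))]\q(SL_2(\C)\times\C^*)^{\mathcal L_S}$, with one copy of $SL_2(\C)\times\C^*$ per cut edge, and the same argument after the unipotent/torus reduction yields $E_S$. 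This is the step I expect to be the main obstacle: one must verify that $\bigcap_{v\in S}D_v$ is isomorphic to this quotient on the nose, with the correct reduced structure and the correct linearization on each cut edge, rather than merely up to flat degeneration, and it is precisely here that the strengthening from $\bar K$ to $\bar B$ and the exact factorization of Proposition \ref{mainfact} are needed.

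For parts (3) and (4) the combinatorial heart is the rank of the linear map $w\mapsto(d_v)_v$ with $d_v=\tfrac12\sum_{e\ni v}w(e)$, i.e. of the incidence matrix of $\Gamma$. A stratum $D_S$ is a genuine codimension-$|S|$ face of $D_{V(\Gamma)}$ exactly when the facets $\{D_v\}_{v\in S}$ are independent, so the poset is governed by the left kernel of this matrix. By the classical rank formula for incidence matrices over $\R$, that kernel is trivial precisely when $\Gamma$ carries an odd cycle or a leaf, giving independent divisors and the full Boolean lattice of part (3); when $\Gamma$ is bipartite with blocks $A,B$ and has no leaf, the kernel is the line spanned by $(\mathbf 1_A,-\mathbf 1_B)$, which is exactly the relation $\sum_{v\in A}d_v=\sum_{v\in B}d_v$. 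This single relation obstructs the simultaneous degeneration of an entire block, collapsing $D_S$ exactly when $V(\Gamma)\setminus S$ lies inside one block, and identifying these strata yields the quotient of the Boolean lattice by the stated ideal, proving part (4). The remaining subtlety here is to check that this one relation cuts out exactly that ideal and no larger degeneracy, i.e. that every other collection of facets stays independent.
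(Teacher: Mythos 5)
Your overall architecture matches the paper's: components $D_v=\{t_v=0\}$ of the multigraded Rees algebra, the deepest stratum as the toric variety of the semigroup $H_{\Gamma}^*$ (resp.\ $U_{\Gamma}^*$) of Clebsch--Gordan edge-weightings, factorization to split $D_S$ into a toric factor on $\Gamma_S$ and a $B$-factor on $\Gamma^S$, and the bipartite incidence relation as the source of the collapse in part (4). Two of the steps you flag as obstacles, however, are exactly where the paper's proof lives, and one of them cannot be closed by the argument you propose. For part (2), the identification of $\bigcap_{v\in S}D_v$ with the quotient on the nose is proved in the paper by showing $\bar{I}_S\cap W_{C_{\Gamma},\vec{p}_{\Gamma}}(SL_2(\C))=I_S$, where $\bar{I}_S=\langle t_v\mid v\in S\rangle$ inside $\bigotimes_{v}W_{0,3}(SL_2(\C))$; the proof uses the planar $\Gamma$-tensor basis (an invariant element of $\bar{I}_S$ expands into planar $\Gamma$-tensors whose weight diagrams are no larger, hence which already lie in $I_S$). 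You would need some substitute for this basis argument; Proposition \ref{mainfact} alone does not supply it.

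The genuine gap is in parts (3)--(4). You reduce the stratification poset to the rank of the (internal-vertex) incidence matrix, asserting that $D_S$ has codimension $|S|$ exactly when the functionals $d_v$, $v\in S$, are linearly independent. For a polyhedral cone this inference is false: linear independence of facet-defining functionals does not determine the face poset. (For the cone $|x|\le z$, $|y|\le z$ in $\R^3$ the functionals $z-x$ and $z+x$ are independent, yet the two corresponding facets meet only in the apex.) Here the cone is cut out by triangle inequalities and a parity condition in addition to the level bounds, so one must actually exhibit, for each $S$ not in the lattice ideal, a weighting in the cone with the vertex sum equal to $2L$ exactly on $S$. This is precisely what Lemma \ref{strat} and Propositions \ref{leaftop} and \ref{g0top} do, by perturbing the all-$2$'s level-$3$ weighting by $\pm\epsilon$ along a path from a chosen vertex to a leaf or to an odd cycle --- the path trick is what keeps the perturbation inside the triangle inequalities. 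Likewise the codimension count is not read off from a matrix rank but from the explicit spanning-tree parametrization of $U_{\Gamma}^*$ with slack variables $L_v$ (Proposition \ref{codim}). Your relation $\sum_{v\in A}d_v=\sum_{v\in B}d_v$ does correctly give one direction of part (4), namely the collapse of $D_S$ onto $D_{V(\Gamma)}$ when $V(\Gamma)\setminus S$ lies in a single block (using $|A|=|B|$ for trivalent leafless $\Gamma$), but the converse --- that no further collapses occur and that the surviving strata have codimension $|S|$ --- again requires constructing the witnessing weightings, which your proposal defers.
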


\noindent
 Theorem \ref{strattheorem} is proved by showing that the toric degeneration of $K_{C_{\Gamma}, \vec{p}_{\Gamma}}(SL_2(\C))$ constructed in \cite{M4} extends to $B_{C_{\Gamma}, \vec{p}_{\Gamma}}(SL_2(\C))$, and that the stratification on these spaces by components of their boundary divisors degenerates componentwise to part of a stratification of the toric degenerations by torus orbits.  In particular, toric degenerations of the spaces $M_{g, n}(SL_2(\C))$  and $P_{g, n}(SL_2(\C))$ are constructed, and the associated convex bodies are shown to be dense open sets in the polytopes assigned to $K_{C_{\Gamma}, \vec{p}_{\Gamma}}(SL_2(\C))$ and $B_{C_{\Gamma}, \vec{p}_{\Gamma}}(SL_2(\C)).$  The integral points in these convex bodies are precisely the $SL_2(\C)$ spin diagrams with topology $\Gamma$, so Theorem \ref{strattheorem} may be seen as an illustration of how the spin diagram combinatorics determines geometric structures. 

  Lifts of the generators and relations for the coordinate rings of toric degenerations of $K_{C_{\Gamma}, \vec{p}_{\Gamma}}(SL_2(\C))$ and $B_{C_{\Gamma}, \vec{p}_{\Gamma}}(SL_2(\C))$ give presentations of the coordinate rings of these spaces, and Theorem \ref{maingeo} provides these lifts: homogeneous skein relations on isotopy classes of paths. The fact that the spaces $B_{C_{\Gamma}, \vec{p}_{\Gamma}}(SL_2(\C))$ and $K_{C_{\Gamma}, \vec{p}_{\Gamma}}(SL_2(\C))$ sit in a flat family with the corresponding spaces over smooth curves $(C, \vec{p})$ then implies that the equations which cut out  $B_{C, \vec{p}}(SL_2(\C))$ and $K_{C, \vec{p}}(SL_2(\C))$ are generically (in $(C, \vec{p})$) deformations of skein ideals.  This should be compared to the conjectural presentation in the $g = 0$ case in \cite{StV}. Similar degenerations are also studied in \cite{StXu} in the $ g= 0$ case and \cite{A} in the $n = 0$ case.

\subsection{Remarks, Questions, and Conjectures}

The construction in Section \ref{mgnsection} implies a procedure for compactifying the character variety $\mathcal{X}(F_g, G)$. 

\begin{proposition}\label{ucompact}
For every $G^3$-equivariant compactification of the space $M_{0, 3}(G) = G^2$ (where the actions are the left diagonal action and the two right actions, see Section \ref{mgnsection}), there is a compactification of $\mathcal{X}(F_g, G)$ for any trivalent graph $\Gamma$ with $\beta_1(\Gamma) = g.$
\end{proposition}

 Roughly, one looks for $G^3-$stable filtrations on $\C[M_{0, 3}(G)]$, these translate to combinatorial features of structures on a ribbon graph related to the representation theory of $G$.  It would be interesting to characterize which of these compactifications lead to $\Gamma-$compactifications of $M_{g, n}(G)$ which are all flat deformations of each other.  Our construction arrises from $G^3$-stable valuations which stem from the theory of conformal blocks, extrapolating from our results for $SL_2(\C)$ in Section \ref{compact}, we conjecture that these always have a simple description in type $A,$ see Section \ref{algebra} for the relevant definitions.

\begin{conjecture}\label{l1gen}
The algebra $W_{0, 3}(SL_m(\C))$ is generated by the extended conformal blocks of level $1.$ 
\end{conjecture}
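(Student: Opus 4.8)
The plan is to combine the special combinatorics of the type $A$ weight lattice with a toric degeneration of $W_{0,3}(\SL_m(\C))$ in the spirit of \cite{M4}, reducing the generation statement to a polyhedral computation. Recall that $W_{0,3}(\SL_m(\C))$ is graded by the level $L$, and by the $G^3$-decomposition of the coordinate ring of $\bar{B}_{C_{0,3}}(\SL_m(\C))$ together with the correlation property, its level $L$ piece is assembled from the extended conformal blocks
\[
W_{0,3}(L) \;=\; \bigoplus_{\lambda_1, \lambda_2, \lambda_3 \in \Delta_L} V_{\mathbb{P}^1}(\vec{\lambda}, L) \otimes V(\lambda_1)\otimes V(\lambda_2) \otimes V(\lambda_3),
\]
where $V_{\mathbb{P}^1}(\vec{\lambda}, L) \subset (V(\lambda_1)\otimes V(\lambda_2)\otimes V(\lambda_3))^{\mathfrak{g}}$. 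The first observation I would record is that for $\mathfrak{g} = \mathfrak{sl}_m$ the level of a dominant weight $\lambda = \sum_i a_i \omega_i$ is exactly $\sum_i a_i$, so that $\Delta_L = \{\sum_i a_i \omega_i : a_i \geq 0,\ \sum_i a_i \leq L\}$. In particular every $\lambda \in \Delta_L$ is a sum of $L$ elements of $\Delta_1 = \{0, \omega_1, \dots, \omega_{m-1}\}$, so the monoid indexing the weight data is generated in level $1$. Thus Conjecture \ref{l1gen} is equivalent to surjectivity of the iterated multiplication maps from tensor powers of $W_{0,3}(1)$ onto $W_{0,3}(L)$.

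The multiplication factors through two maps, which I would treat separately. On the representation-theoretic factor one must show that the Cartan multiplication $V(\mu) \otimes V(\nu) \to V(\mu + \nu)$, composed over a decomposition $\lambda = \omega_{i_1} + \cdots + \omega_{i_L}$, surjects onto $V(\lambda)$; this is the standard surjectivity of the Cartan product onto the top component and presents no difficulty. The content lies in the conformal block factor: one must show that the fusion products of level $1$ blocks $V_{\mathbb{P}^1}(\omega_{i}, \omega_{j}, \omega_{k}, 1)$ span the generally higher-dimensional space $V_{\mathbb{P}^1}(\vec{\lambda}, L)$. Here level $1$ is especially tractable, since the integrable level $1$ modules of $\widehat{\mathfrak{sl}}_m$ are precisely the fundamentals $V(\omega_0), \dots, V(\omega_{m-1})$ with abelian $\Z/m$ fusion, so each level $1$ block is at most one-dimensional and completely explicit. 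To promote these building blocks to a spanning statement at arbitrary level, I would extend the toric degeneration of $K_{C_{\Gamma}, \vec{p}_{\Gamma}}(\SL_2(\C))$ of \cite{M4} to $B_{C_{0,3}}(\SL_m(\C))$, using the $G^3$-stable valuation coming from a Gelfand--Tsetlin (equivalently hive-type) coordinate system on conformal blocks. The associated graded of $W_{0,3}(\SL_m(\C))$ is then the semigroup algebra $\C[S_{0,3}]$ of an affine semigroup $S_{0,3}$ sitting in a rational polyhedral cone and graded by level; since generators of $\gr$ lift to generators, it suffices to show $S_{0,3}$ is generated by its level $1$ lattice points.

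The hard part will be this final polyhedral step: identifying the semigroup $S_{0,3}$ for $\SL_m(\C)$ and proving that its Hilbert basis lies entirely in level $1$, equivalently that the defining cone has the integer decomposition property with respect to the level grading. For $\SL_2(\C)$ this is exactly the statement established in Section \ref{compact}, where the cone is simplicial with primitive generators the three level $1$ weight configurations. For general $m$ the relevant polytope is a hive-type polytope whose lattice points enumerate $V_{\mathbb{P}^1}(\vec{\lambda}, L)$, and the difficulty is precisely that these blocks are higher-dimensional: one must rule out ``hidden'' lattice points in higher level that are not sums of level $1$ points. I expect this to follow from the integer decomposition property of the hive cone cut out by the type $A$ inequalities together with the single additional level inequality $\sum_i a_i \leq L$ at each marked point; verifying that decomposition property, and checking that the level grading interacts compatibly with the chosen Gelfand--Tsetlin filtration so that the lift of generators is faithful, is where the real work resides.
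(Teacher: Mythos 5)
This statement is a \emph{conjecture} in the paper, not a theorem: the author offers no proof, notes only that the case $G = SL_3(\C)$ follows from results in \cite{M10}, and explicitly identifies a possible route to a proof (a standard monomial theory on $\C[M_{0,3}(SL_m(\C))] = \C[SL_m(\C)^2]$ compatible with the conformal block filtration, equivalently a suitable toric degeneration). Your proposal is, in outline, exactly that suggested program, so there is nothing in the paper to compare it against step by step; the question is whether you have closed the conjecture, and you have not.

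The gap is concrete and is precisely the content of the conjecture. Your reductions up to the last paragraph are sound: the level of $\lambda = \sum a_i\omega_i$ in type $A$ is $\sum a_i$, so the weight data is generated in level $1$; by $G^3$-equivariance the representation-theoretic factor $V(\vec{\lambda}^*)$ decouples and is handled by surjectivity of the Cartan product; and \emph{if} one has a $G^3$-stable filtration on $W_{0,3}(SL_m(\C))$ whose associated graded is an affine semigroup algebra graded compatibly with the level, then generation lifts from the semigroup. But the two things you defer are exactly the open problems. First, the existence of such a degeneration for general $m$ with the stated compatibility is not established (the paper constructs it only for $SL_2(\C)$ in Section \ref{compact}, and the $SL_3(\C)$ case rests on \cite{M10}; see also \cite{KM}); ``checking that the level grading interacts compatibly with the chosen Gelfand--Tsetlin filtration'' is not a verification but a construction that has to be carried out, and the naive Gelfand--Tsetlin valuation on $\C[SL_m(\C)^2]$ is not obviously $G^3$-stable nor adapted to the operator $e_{\vec{p}}$ of \cite{B1} that cuts out $V_{\mathbb{P}^1}(\vec{\lambda},L)$ inside $(V(\vec{\lambda}))^{\mathfrak{g}}$ via Proposition \ref{ueno}. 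Second, even granting the degeneration, the assertion that the resulting hive-type cone with the level inequalities has its Hilbert basis in level $1$ is an unproved combinatorial statement for $m \geq 4$; saturation-type results for hive cones do not imply generation of the graded semigroup in degree $1$, and nothing in the paper or its references supplies this. So the proposal is a reasonable restatement of the intended strategy, but every point at which it could fail is deferred to ``the real work,'' and that work is the conjecture itself.
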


 We note that results in \cite{M10} imply this conjecture in the case $G = SL_3(\C).$   A resolution could come from developing a standard monomial theory on the coordinate ring of $M_{0, 3}(SL_m(\C)) = SL_m(\C) \times SL_m(\C)$ which is compatible with the filtration defined by the  conformal blocks. It would be interesting to relate such a structure to the cluster algebra structure on (dense open subschemes of) the spaces $M_{g, n}(G)$ and $P_{g, n}(G)$ given in \cite{FG} (see also \cite{MSW}).  

A relationship between $\mathcal{X}(F_g, G)$ and $M_C(G)$ for $C$ a smooth curve is studied by Florentino in \cite{F}, in relation to Schottky normalization of vector bundles on curves. We do not know if our realization of $\mathcal{X}(F_g, G)$ as a dense open subset of $M_{C_{\Gamma}}(G)$ for singular curves is in any sense a degeneration of Florentino's map, but it would be interesting to determine if a relationship between these constructions exists.   It also remains to relate the spaces $B_{C, \vec{p}}(G)$ to other ''universal" spaces of principal bundles, for example the space constructed by Bhosle, Biswas, and Hurtubise, \cite{BBH} in the type $A$ case.

The proof of Theorem \ref{strattheorem} constitutes part of an analysis of the face poset of the phylogenetic statistical polytopes (or rather the ''spin diagram polytope") studied in \cite{BBKM}, \cite{BW}, and \cite{Bu}. Each of these polytopes provides a combinatorial model for the spaces we study, so a full description of this poset would be interesting for phylogenetics, the character varieties, and the combinatorics of spin diagrams.  These polytopes also have generalizations for other groups, see \cite{KM}, \cite{M10}. 

\subsection{Acknowledgements}
We thank Sean Lawton for many useful conversations about free group character varieties, Neil Epstein for his helpful remarks
on Rees algebras, Geir Agnarsson for sharing his knowledge of graph theory, and Steven Sam for a helpful discussion on the material in Subsection \ref{genus3}.  We also thank Kaie Kubjas and Nick Early for useful remarks on an earlier version of this manuscript.

\subsection{Outline of the paper}

In Section $\ref{graphsection}$ we give the background on the graphs $\Gamma$ which stratify $\bar{\mathcal{M}}_{g, n}.$  In Section $\ref{mgnsection}$ we give a construction of the spaces $M_{g, n}(G).$ In Sections $\ref{factorcor}, \ref{algebra}, \ref{stablecurvesection}$ we bring in the theory of conformal blocks, and prove the Rees algebra statement Theorem \ref{maingen1}.  In Section \ref{sl2theory} we specialize to the $SL_2(\C)$ case and describe the skein algebra structure on $M_{g, n}(SL_2(\C))$.  In Section $\ref{compact}$ we explore the Skein relations in the setting of conformal blocks, and in Section $\ref{stratificationsection}$ we describe the boundary stratifications and prove Theorem \ref{strattheorem}.

\newpage

\tableofcontents

\section{Graphs and curves}\label{graphsection}

In what follows $\Gamma$ denotes a graph with non-leaf vertices $V(\Gamma)$ and edges $E(\Gamma)$.  We let $L(\Gamma) \subset E(\Gamma)$ denote
the set of edges which contain a leaf.  For a vertex $v \in V(\Gamma),$ $\eta(v)$ is the set of edges which contain $v$ and $n(v) = |\eta(v)|$ is the valence of $v.$  The Deligne-Mumford stack $\bar{\mathcal{M}}_{g, n}$ of stable projective curves comes with a stratification $\cup \mathcal{M}_{g, n}(\Gamma, \gamma)$ by stability type. This data is captured in a graph $\Gamma$ with a labelling $\gamma: V(\Gamma) \to \Z_{\geq 0}$ called the internal genus, this satisies the following conditions.

\begin{definition}[semistable graph]
A labelled graph $[\Gamma, \gamma]$ is said to be semistable of genus $g$ if $2\gamma(v) + \eta(v) \geq 3$ for each vertex $v \in V(\Gamma)$ and $\sum \gamma(v) + \beta_1(\Gamma) = g.$ 
\end{definition}

We let $[\hat{\Gamma}, \gamma]$ be the labelled forest obtained from $[\Gamma, \gamma]$ by splitting each internal edge of $\Gamma.$  Notice that the non-leaf vertices of $\hat{\Gamma}$ can be canonically identified with those of $\Gamma,$ and for each vertex $v$ there is a connected component $\Gamma_v \subset \hat{\Gamma}$ isomorphic to its link in $\Gamma.$

\begin{figure}[htbp]
\centering
\includegraphics[scale = 0.2]{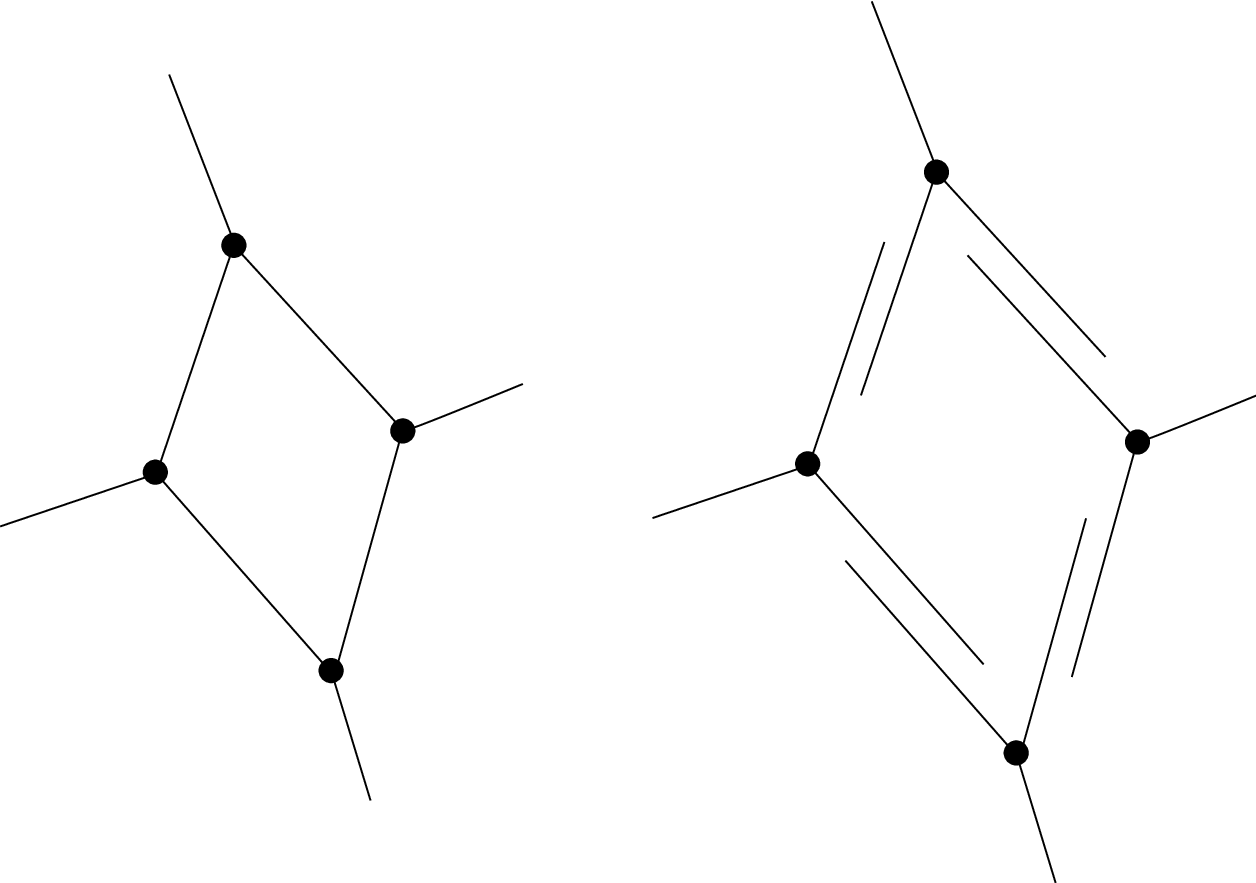}
\caption{A graph $\Gamma$ with forest $\hat{\Gamma}$}
\label{gammahatgamma}
\end{figure}

There is a natural quotient graph $\Gamma/e$ obtained by deleting an interior edge $e,$ and bringing its endpoints $v, w$ together to form a vertex $u$ with label $\gamma(v) + \gamma(w)$. We call a composition of these maps an admissable map. Admissable maps define a partial ordering on the genus $g$ semistable graphs with $n$ leaves, where $[\Gamma', \gamma'] \leq [\Gamma, \gamma]$ if $\Gamma$ can be obtained from $\Gamma'$ by a collapsing a subset $S \subset E(\Gamma).$  A stratum $\mathcal{M}_{g, n}(\Gamma', \gamma')$ appears in the boundary of $\mathcal{M}_{g, n}(\Gamma, \gamma)$ precisely when $[\Gamma', \gamma'] \leq [\Gamma, \gamma].$   A point $(C, \vec{p}) \in \mathcal{M}_{g, n}(\Gamma, \gamma)$ can be viewed as a union
of smooth marked curves $(C_v, \vec{q}_v)$ with a bijection between $\vec{q}_v$ and the edges in $\eta(v) = E(\Gamma_v)$, and $genus(C_v) = \gamma(v).$  This decomposition is canonical, and gives an isomorphism $\mathcal{M}_{g, n}(\Gamma, \gamma) \cong \prod_{v \in V(\Gamma)} \mathcal{M}_{\gamma(v), n(v)}.$

We will focus on trivalent graphs $[\Gamma, \gamma]$ with $\gamma(v) = 0$ for all $v \in V(\Gamma),$ when it is clear that we are using this flavor of semistable graph, we will drop the $\gamma.$   The following proposition establishes that these graphs are all connected by a series of local operations, for a proof see \cite{Bu}, Section 3.

\begin{definition}
We say two trivalent graphs $\Gamma, \Gamma'$ are mutation equivalent if there is a sequence of graphs $\Gamma = \Gamma_1, \ldots, \Gamma_k = \Gamma'$ such that $\Gamma_i/ e_i = \Gamma_{i+1}/e_{i+1}$ for some edges $e_i \in E(\Gamma_i)$.
\end{definition}

\begin{figure}[htbp]
\centering
\includegraphics[scale = 0.3]{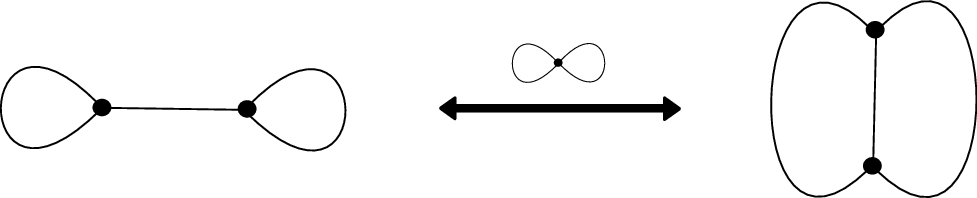}
\caption{Mutation of two genus $2$ graphs.}
\label{mutationeq}
\end{figure} 

\begin{proposition}\label{mutantequivalent}
Any two trivalent graphs with the same genus $g$ and number of leaves $n$ are mutation equivalent. 
\end{proposition}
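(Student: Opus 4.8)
The plan is to show that every trivalent graph of type $(g,n)$ is mutation equivalent to a single normal form $N_{g,n}$; since mutation equivalence is the equivalence relation generated by the symmetric, reflexive relation $\Gamma/e = \Gamma'/e'$, this at once yields that any two graphs of a fixed type are mutation equivalent. First I would unwind the definition: contracting an interior edge $e$ of a trivalent graph produces a graph with a single $4$-valent vertex and all others trivalent, so two trivalent graphs are related by a single mutation exactly when they are two of the (at most three) ways of splitting that $4$-valent vertex back into an edge joining two trivalent vertices. Thus a mutation is a local \emph{Whitehead move} at an interior edge, with up to two partners per edge. Writing $I$ for the number of interior edges, the counts $3|V(\Gamma)| = 2I + n$ and $\beta_1 = I - |V(\Gamma)| + 1 = g$ force $I = 3g-3+n$, and I would induct on $d = 3g-3+n$, with base cases $(g,n) = (0,3)$ and $(1,1)$, each a single graph (a vertex with three legs, resp. a vertex with one loop and one leg).

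For the inductive step with $n \geq 1$ I would peel off the leaf labelled $n$. Let $\pi$ send $\Gamma$ to the type $(g,n-1)$ graph obtained by deleting leaf $n$ and suppressing the resulting bivalent vertex, recording the position of leaf $n$ as a subdivision point of one edge of $\pi(\Gamma)$. The key lemma is that a Whitehead move realizes the \emph{sliding} of a leaf across an adjacent trivalent vertex, so by connectedness leaf $n$ may be slid by mutations onto any edge of $\pi(\Gamma)$; moreover every Whitehead move of $\pi(\Gamma)$ lifts to one of $\Gamma$ once leaf $n$ has first been slid off the edge being flipped. Bringing $\pi(\Gamma)$ to $N_{g,n-1}$ by the inductive hypothesis and then sliding leaf $n$ into its standard slot gives $\Gamma \sim N_{g,n}$, where $N_{g,n}$ is \emph{defined} to be $N_{g,n-1}$ with leaf $n$ inserted at the standard edge.

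For the inductive step with $n = 0$ (so $g \geq 2$) I would mimic the normalization of a nodal curve, the combinatorial shadow of the factorization rule: choose an edge $e$ lying on a cycle and \emph{cut} it, replacing $e$ by two new legs to obtain a connected graph of type $(g-1,2)$. The crucial observation is that $e$ is no longer present in the cut graph, so \emph{every} interior edge of the cut graph is a genuine interior edge of $\Gamma$, and each of its Whitehead moves lifts by re-gluing the two legs; in particular the move exchanging a loop-on-a-stalk for a bigon appears downstairs as an ordinary flip at the stalk. By the inductive hypothesis the cut graph is mutation equivalent to $N_{g-1,2}$, and re-gluing its two distinguished legs defines $N_{g,0}$, giving $\Gamma \sim N_{g,0}$.

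The main obstacle is the flip-lifting bookkeeping at the edge being modified. In the leaf case one must verify that a move of $\pi(\Gamma)$ whose flipped edge currently carries leaf $n$ is still realizable after the leaf has been cleared; in the genus case one must check precisely that the \emph{only} move of the ambient graph with no counterpart in the cut graph is the flip at $e$ itself, which cutting removes. One also has to treat the degenerate configuration in which leaf $n$ sits on a loop, which forces $(g,n)=(1,1)$ and is already a base case, and confirm that the recursively defined normal forms $N_{g,n}$ are internally consistent. I expect all remaining points to be routine verifications using the Euler-characteristic constraints above.
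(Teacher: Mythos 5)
The paper itself does not prove this proposition---it explicitly defers to the reference \cite{Bu}---so there is no internal argument to compare against; I can only judge your proof on its own terms, and it is correct. Your reading of a mutation as a Whitehead move (re-pairing the four edge-germs at the ends of a contracted non-loop interior edge) is the intended one, and the double reduction---peel off leaf $n$ to pass from $(g,n)$ to $(g,n-1)$, cut a non-bridge edge to pass from $(g,0)$ to $(g-1,2)$---strictly decreases $3g-3+n$ and bottoms out at the unique graphs of types $(0,3)$ and $(1,1)$. The two lemmas you lean on both check out: flipping the interior edge from the leaf's vertex to a neighbouring vertex $w$ really does carry the leaf across $w$ onto either remaining edge-germ at $w$ (and this works verbatim in the bigon and loop-at-$w$ configurations), so connectedness lets you park the leaf on any edge of $\pi(\Gamma)$; and a flip of $\pi(\Gamma)$ at an edge not currently carrying the leaf, or of the cut graph at any of its interior edges, is a purely local re-pairing of the four adjacent germs and therefore commutes with re-inserting the leaf or re-gluing the two legs. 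The one degeneracy---the leaf's vertex also carrying a loop---does force the whole graph to be the $(1,1)$ graph, as you say. A small simplification: the ``converse'' bookkeeping you flag as the main obstacle (deciding which moves of the ambient graph descend) is never actually needed, since the argument only ever lifts moves from the smaller graph upward. Your argument is in the same normal-form spirit as the proof in \cite{Bu} that the paper invokes, and would serve as a self-contained replacement for that citation.
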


\noindent
The stratum $\mathcal{M}_{g, n}(\Gamma)$ corresponding to a trivalent graph is a product of copies of $\mathcal{M}_{0, 3} = \{pt\}$ by the remarks above, and is therefore a single point $(C_{\Gamma}, \vec{p}_{\Gamma}).$

We define $\Gamma_{g, n}$ to be the unique graph with one vertex, $n$ leaves, and $\beta_1(\Gamma_{g, n}) = g.$  For any graph $\Gamma$ with $\beta_1(\Gamma) = g,$ and $n$ leaves we can find an admissable map $\pi_{\tree}: \Gamma \to \Gamma_{g, n}$ by collapsing a spanning tree $\tree \subset \Gamma$ to a single vertex.  We note that this map places the loop edges of $\Gamma_{g, n}$ in bijection with the complement of $E(\tree)$ in
$E(\Gamma).$

\section{Valuations, filtrations, and Rees algebras}\label{valfilrees}

In this section we review properties of valuations and filtrations on a commutative domain. We describe  a situation which allows two valuations to be added with non-negative coefficients to obtain another valuation, this provides technical details we will need in Section \ref{sl2theory}.   Then we review the Rees algebra construction associated to a filtration, this will be used to prove Theorems \ref{maingen1} and  \ref{strattheorem}.   

\subsection{Valuations and filtrations}\label{sumval}

In what follows, a rank $1$ valuation $v: A \to \R \cup \{-\infty\}$ on a commutative $\C$-domain is a function which satisfies
the following properties. 

\begin{enumerate}
\item $v(ab) = v(a) + v(b)$\\
\item $v(a + b) \leq max\{v(a), v(b)\}$\\
\item $v(C) = 0$ for all $C \in \C^*.$\\
\item $v(0) = -\infty$\\
\end{enumerate}

\noindent
In particular we deal exclusively with valuations which induce the trivial valuation on $\C \subset A.$  A valuation defines an increasing filtration
by the subspaces $v_{\leq k}(A) \subset A$, along with an associated graded algebra  $\bigoplus_{k \in v(A)} v_{\leq k}(A)/v_{< k}(A)$.  An increasing filtration $F$ by real numbers on a domain $A$ comes from a valuation precisely when $\C^* \subset F_{\leq 0} \setminus F_{<0}$ and its associated graded algebra $gr_F(A)$ is also a domain.   From now on we use $v$ to denote both the valuation and the induced filtration.

Let $v$ and $w$ be valuations on domains $A$ and $B$ respectively, these functions can be used to define filtrations on the tensor product (always taken over $\C$) $A \otimes B$. It is easy to check that both of these filtrations come from valuations. 

\begin{equation}
v_{\leq k}(A \otimes B) = v_{\leq k}(A) \otimes B \ \ \ \ w_{\leq k}(A \otimes B) = A \otimes w_{\leq k}(B)\\
\end{equation}

The sum of two filtrations $v, w$ is defined by the subspaces, 
 
\begin{equation}
(v \oplus w)_{\leq m} = \sum_{i + j \leq m} v_{\leq i} \cap w_{\leq j}.\\
\end{equation}

\noindent
Similarly, the multiple $Rv$ , $R \in \R_{\geq 0}$ of a filtration is defined by setting $(Rv)_{\leq m} = \sum_{k \leq \frac{m}{R}} v_k(A) \subset A$, 
in particular $Rv \oplus Sv = (R+ S)v$.

\begin{lemma}\label{val2}
Let $v, w: A \otimes B \to \R \cup \{-\infty\}$ be as above. Any linear combination $S v \oplus T w$, $S, T \in \R_{\geq 0}$ 
defines a valuation on $A \otimes B.$
\end{lemma}

\begin{proof}
The associated graded algebra of $v$ as a valuation on $A \otimes B$ is $gr_v(A) \otimes B$, which is manifestly a domain.  This implies that $v$ and $w$ define valuations on $A \otimes B.$   Now we consider the sum valuation $S v \oplus T  w$.  By the first part, without loss of generality, we may take $S, T = 1$.  We must prove that the associated graded algebra is a domain, we claim that it is in fact isomorphic to $gr_v(A) \otimes gr_w(B)$. We compute the quotient $(v \oplus w)_{\leq m}/ (v \oplus w)_{< m}$.

\begin{equation}
(v \oplus w)_{\leq m}/ (v \oplus w)_{< m} = \sum_{i + j \leq m} v_{\leq i} \cap w_{\leq j} \  / \sum_{i + j < m} v_{\leq i} \cap w_{\leq j}\\\\
\end{equation}

\noindent
The space $v_{\leq i}\cap w_{\leq j}$ is equal to $v_{\leq i} \otimes w_{\leq j},$ and $v_{\leq i} \otimes w_{\leq j} \subset v_{\leq i'} \otimes w_{\leq j'}$ if and only if $i \leq i'$ and  $j \leq j'$.  The space $(v \oplus w)_{\leq m}/ (v \oplus w)_{< m}$ is spanned by the images of the spaces $ v_{\leq i} \otimes w_{\leq j}$ with $i + j = m$, and the intersection of two of these spaces $v_{\leq i} \otimes w_{\leq j} \cap v_{\leq i'} \otimes w_{\leq j'}$ is $v_{\leq min\{i, i'\}} \otimes w_{\leq min\{j, j'\}}$, which is contained in $ (v \oplus w)_{< m}$. It follows that the above quotient simplifies to the direct sum of the spaces $v_{\leq i} \otimes w_{\leq j} / v_{< i} \otimes w_{\leq j} + v_{\leq i} \otimes w_{<j} = v_{\leq i}/v_{< i} \otimes w_{\leq j}/ w_{< j},$ this proves the claim. 
\end{proof}

\noindent
Lemma \ref{val2} can be applied to two valuations $v, w: C \to \R \cup \{-\infty\}$ induced from an inclusion of algebras $C \subset A \otimes B$, this is how we use it in Section \ref{sl2theory}.

\subsection{Rees algebras}

Let $A$ be a domain over $\C$ with an increasing $\Z_{\geq 0}-$filtration $F$, the Rees algebra of $A$ with respect to $F$
is defined to be the following graded subring of $A[t].$

\begin{equation}
R = \bigoplus_{m \geq 0} F_m \subset \bigoplus_{m \geq 0} A = A[t]\\
\end{equation}

\noindent
 The following are standard results on Rees algebras, see e.g. \cite{AB}, Section 2. We let $t \in R$ be the copy of the identity in $F_1.$

\begin{enumerate}
\item $R$ is flat over the polynomial ring $\C[t] \subset R,$\\
\item $\frac{1}{t}R \cong A[t, \frac{1}{t}],$\\
\item $R/t \cong gr_F(A).$\\
\end{enumerate}

Suppose that $F_0 = \C,$ we consider $\bar{X} = Proj(R).$ This scheme is covered by the open neighborhoods $D(f) = \{[p] \subset R | f \notin p\}$ for $f \in R$ homogenous, with $\bar{X} \setminus D(f) = Proj(R/f).$  Here $D(f)$ is the affine scheme obtained by taking the spectrum of the degree $0$ subalgebra $[\frac{1}{f}R)]_0 \subset \frac{1}{f}R.$  Points $(2)$ and $(3)$ above imply that $\bar{X}$ is a disjoint union of the open set $X = D(t) = Spec(A)$ and the hypersurface $Proj(gr(A)) = \bar{X} \setminus X.$   Finite generation of the algebra $gr(A)$ is equivalent to finite generation of $R$ (see e.g. \cite{AB}, Proposition 2.2), and implies finite generation of $A.$ If this is the case, $\bar{X}$ embeds as a closed subscheme in a weighted projective space, and is therefore projective.

\subsection{Valuation from a nilpotent operator}\label{nilval}

Let $e: A \to A$ be a $\C$-linear nilpotent derivation on a domain $A.$    Let $(v_e)_{\leq L}(A)$ be the space of elements $a \in A$ such that $e^{L+1}(a) = 0.$ If $e^K(a) \neq 0, e^L(b) \neq 0,$ $e^{K+1}(a) = e^{L+1}(b) = 0,$ then  $e^{L+K }(ab) = \sum \binom{K+L}{m} e^{K+L - m}(a)e^m(b)$ $= \binom{K+L}{K}e^K(a)e^L(b) \neq 0$ , and $e^{K+L+1}(ab) = 0.$ It follows that the associated graded ring $gr_{v_e}(A)$ is a domain, and $v_e$ defines a valuation by the formula $v_e(a) = min\{ L | e^{L+1}(a) = 0\}$.

\subsection{$G-$stable filtrations}

We will also need the following lemma, which relates the associated graded algebra of a filtration by representations
of a reductive group $G$ to the associated graded algebra of the invariant ring.  We say a filtration $F$ is $G-$stable
if each space $F_{\leq m} \subset A$ is a $G-$representation. 

\begin{lemma}\label{val1}
Let $A$ be a $\C$ algebra with a rational action by a reductive group $G$, and let $v$ be a $G-$stable valuation, then the following holds. 

\begin{equation}
gr_v(A^G) \cong (gr_v(A))^G\\
\end{equation}

\end{lemma}

\begin{proof}
This is a consequence of the exactness formulation of reductivity of $G$, as applied to the short exact sequence $0 \to v_{< m} \to v_{\leq m} \to v_{\leq m}/v_{< m} \to 0.$
\end{proof}

\section{The scheme $M_{g, n}(G)$}\label{mgnsection}

In this section define an affine scheme $M_{\Gamma}(G)$ for every semistable graph $\Gamma$ with $\gamma(v) = 0$ for every $v \in V(\Gamma).$ We  show that the isomorphism type of $M_{\Gamma}(G)$  depends only on the number of leaves the first Betti number.   We also give a related construction which depends on the choice of an orientation on $\Gamma.$  First we review important structural features of the coordinate ring of a reductive group $G$ which will be used in Sections \ref{mgnsection} and \ref{algebra}.  In what follows we use the shorthand $\vec{\lambda}$ to mean a tuple of dominant weights $(\lambda_1, \ldots, \lambda_n)$ and $V(\vec{\lambda})$ to mean a tensor product of irreducible representations $V(\lambda_1) \otimes \ldots \otimes V(\lambda_n).$

\subsection{The ring $\C[G]$}\label{Gring}

We review some $G-$linear algebra which plays a part in defining the multiplication operation in the coordinate ring $\C[G]$, this is needed in Section \ref{algebra} .   Recall that the Peter-Weyl theorem gives a decomposition of the coordinate ring $\C[G]$ as the sum of the endomorphism spaces
of the irreducible representations of $G.$

\begin{equation}
\C[G] = \bigoplus_{\lambda \in \Delta_G} End(V(\lambda))\\
\end{equation}

We let $v_{\lambda}, v_{-\lambda^*} \in V(\lambda)$ be highest, respectively lowest weight vectors, and $p_{\lambda}: V(\lambda, \lambda^*) \to \C$ be the unique $G-$invariant map which sends $v_{\lambda}\otimes v_{-\lambda}$ to $1.$  This choice determines $G-$invariant isomorphisms $V(\lambda)^* \cong V(\lambda^*)$, $V(\lambda, \lambda^*) \cong End(V(\lambda)).$ The function $p_{\lambda}$ is identified with the trace map $Tr_{\lambda}: End(V(\lambda)) \to \C$ under this second isomorphism, similarly we let $O_{\lambda} \in V(\lambda, \lambda^*)$ be the tensor which maps to the identity $I_{\lambda} \in End(V(\lambda))$.  Considered as an endomorphism, an element $v \otimes f \in V(\lambda, \lambda^*)$ acts on $V(\lambda)$ by sending $w \in V(\lambda)$ to $v \otimes p_{\lambda}(w, f)$, the pair $v \otimes f \in V(\lambda, \lambda^*)$ defines a regular function on the group $G$ as follows. 

\begin{equation}
g \to p_{\lambda}(g^{-1}v, f)\\  
\end{equation}

\noindent
 Note that $p_{\lambda}$ and $O_{\lambda^*}$ are dual to each other under the identification $End(V(\lambda))^* \cong End(V(\lambda^*)),$ and $p_{\lambda}(O_{\lambda}) = dim(V(\lambda)).$  For any pair $\alpha, \beta$ of dominant weights, the representations $V(\alpha)\otimes V(\beta)$ and $V(\alpha^*)\otimes V(\beta^*)$ have $G-$linear decompositions, 

\begin{equation}
V(\alpha)\otimes V(\beta) = \bigoplus_{\eta} Hom(V(\eta), V(\alpha)\otimes V(\beta))\otimes V(\eta),\\
\end{equation}

\begin{equation}
V(\alpha^*)\otimes V(\beta^*) = \bigoplus_{\eta} Hom(V(\eta^*), V(\alpha^*)\otimes V(\beta^*))\otimes V(\eta^*).\\
\end{equation}

For two intertwiners $f \in $ $Hom(V(\eta), V(\alpha)\otimes V(\beta))$ and $g \in Hom(V(\eta^*), V(\alpha^*)\otimes V(\beta^*))$, the map $p_{\alpha}\otimes p_{\beta}\circ(f \otimes g): V(\eta, \eta^*) \to \C$ is $G-$invariant, this implies that it is a multiple $T_{\eta}^{\beta, \alpha}(f, g)$ of $p_{\eta}$.  The following are consequences of elementary linear algebra. 

\begin{lemma}
The map $T_{\eta}^{\beta, \alpha}: Hom(V(\eta), V(\alpha)\otimes V(\beta) \otimes Hom(V(\eta^*), V(\alpha^*)\otimes V(\beta^*) \to \C$ is non-degenerate, and gives an isomorphism $Hom(V(\eta), V(\alpha)\otimes V(\beta))^* \cong  Hom(V(\eta^*), V(\alpha^*)\otimes V(\beta^*)).$
Furthermore,  

\begin{equation}\label{expand}
p_{\alpha}\otimes p_{\beta} = \sum T_{\eta}^{\beta, \alpha} \otimes p_{\eta},\\
\end{equation}

\noindent
holds in $[V(\alpha) \otimes V(\alpha^*) \otimes V(\beta) \otimes V(\beta^*)]^*.$
\end{lemma}

 For $(v \otimes f) \otimes (w \otimes h)\in V(\alpha, \alpha^*)\otimes V(\beta, \beta^*),$ we can decompose $v \otimes w = \sum F_{\eta}\otimes X_{\eta},$ and $f \otimes h = \sum K_{\eta^*} \otimes Y_{\eta^*}$.  The product is then the function which takes $g \in G$ to the following complex number. 

\begin{equation}
 p_{\alpha}(g^{-1}v, f) p_{\beta}(g^{-1}w, h) = \sum T_{\eta}^{\alpha, \beta}(F_{\eta}, K_{\eta^*})p_{\eta}(g^{-1}X_{\eta}, Y_{\eta^*}).\\ 
\end{equation}

 The linear extension of this map to non-simple tensors is then the multiplication map $m: V(\alpha, \alpha^*)\otimes V(\beta, \beta^*) \to \bigoplus_{\eta} V(\eta, \eta^*)$ in $C[G].$ We let $I_{\eta}^{\beta, \alpha}$ $\in Hom(V(\eta), V(\alpha)\otimes V(\beta)) \otimes Hom(V(\eta^*), V(\alpha^*)\otimes V(\beta^*))$ be the element which represents the identity on the vector space $Hom(V(\eta), V(\alpha)\otimes V(\beta)) \otimes V(\eta)$, by dualizing Equation \ref{expand} we get $O_{\alpha} \otimes O_{\beta} =$ $\sum I_{\eta}^{\alpha, \beta} \otimes O_{\eta}.$  Let $q_{\eta}^{\alpha, \beta}: V(\eta, \eta^*) \to V(\alpha, \alpha^*) \otimes V(\beta, \beta^*)$ be the map which sends $\Phi \in V(\eta, \eta^*)$ to $I_{\eta}^{\beta, \alpha} \otimes \Phi.$ The following is a consequence of this discussion. 

\begin{lemma}\label{multlemma}
The dual of the multiplication map $m: V(\alpha, \alpha^*) \otimes V(\beta, \beta^*) \to \bigoplus_{\eta} V(\eta, \eta^*)$ is the following
$G\times G$ intertwiner, where $\leq$ is the dominant weight ordering. 

\begin{equation}
q = \sum q_{\eta^*}^{\alpha^*, \beta^*} : \bigoplus_{\eta \leq \alpha + \beta} V(\eta^*, \eta) \to V(\alpha^*, \alpha) \otimes V(\beta^*, \beta)\\
\end{equation}

\end{lemma}

 We also make use of the isomorphism $G = [G\times G]/G$, and the corresponding isomorphism of coordinate rings, $\C[G] \cong \C[G\times G]^G.$  On the level of spaces, this isomorphism is given by the map $\phi: G\times G \to G$, $\phi(g, h) = gh^{-1}.$  A section $\psi: G \to G\times G$ is given by $\psi(g)  = (g, 1)$. This is a special case of the isomorphism $[X \times G]/G \cong X$
for any $G-$space, given by $(x, h) \to hx$ with inverse $x \to (x, 1).$

\begin{equation}
\C[G \times G] = \bigoplus_{\lambda, \eta} V(\lambda, \lambda^*)\otimes V(\eta, \eta^*)\\
\end{equation}

\begin{equation}
\C[G \times G]^G =  \bigoplus_{\eta} V(\eta^*)\otimes V(\eta) \otimes \C O_{\eta} \subset \bigoplus_{\eta} [V(\eta^*)\otimes V(\eta)] \otimes [V(\eta)\otimes V(\eta^*)]\\
\end{equation}

In the equation above, $O_{\eta}$ is supported on the second and fourth indices.
The map $\psi^* : \C[G\times G]^G \to \C[G]$ takes $f \otimes v \otimes O_{\eta} \in  V(\eta, \eta^*) \otimes \C O_{\eta}$ to the function $[g \to dim(V(\eta))f(g^{-1}v)] \in \C[G]$, so $\psi^*(V(\eta)\otimes V(\eta^*) \otimes \C O_{\eta}) = V(\eta, \eta^*)$.  We let $\psi_*$ be the linear dual of $\psi^*$, the following lemma is a consequence of the fact that $\psi^*$ is a ring homomorphism.

\begin{lemma}\label{multcommute}
Let  $\alpha = \alpha_1 = \alpha_2^*$, $\beta = \beta_1 = \beta_2^*,$ and $\eta = \eta_1 = \eta_2^*$ for ease of notation. 
The following diagram commutes. 

$$
\begin{CD}
V(\alpha, \alpha^*) \otimes V(\beta, \beta^*)  @>\psi_* \otimes \psi_*>> V(\alpha_1, \alpha_1^*)\otimes V(\alpha_2, \alpha_2^*) \otimes V(\beta_1, \beta_1^*) \otimes V(\beta_2, \beta_2^*)\\
@A\sum q_{\eta}^{\alpha, \beta}AA @A\sum q_{\eta_1}^{\alpha_1, \beta_1} \otimes q_{\eta_2}^{\alpha_2, \beta_2}AA\\\
\bigoplus_{\eta} V(\eta, \eta^*) @>\psi_*>> \bigoplus_{\eta_1, \eta_2} V(\eta_1, \eta_1^*)\otimes V(\eta_2, \eta_2^*)\\
\end{CD}
$$

In particular  $q_{\eta}^{\alpha, \beta}: V(\eta, \eta^*) \to V(\alpha, \alpha^*)\otimes V(\beta, \beta^*)$ is identified with $q_{\eta}^{\alpha, \beta} \otimes q_{\eta^*}^{\alpha^*, \beta^*} :$ $[V(\eta, \eta^*) \otimes V(\eta^*, \eta)]^G \to$ $ [V(\alpha, \alpha^*) \otimes V(\alpha^*, \alpha)]^G \otimes [V(\beta, \beta^*) \otimes V(\beta^*, \beta)]^G$.
\end{lemma}

\subsection{The affine scheme $M_{g, n}(G)$}

Now we define an affine scheme $M_{\Gamma}(G)$ for every finite graph $\Gamma$.  We show that if $\Gamma$ and $\Gamma'$ have
the same number of leaves $n$ and first Betti number $g$, then $M_{\Gamma}(G) \cong M_{\Gamma'}(G)$.  Accordingly, we define $M_{g, n}(G)$ to be $M_{\Gamma}(G)$ for any $\Gamma$ with compatible data.  We also show that $M_{g, 0}(G) = \mathcal{X}(F_g, G)$ and $M_{g, n}(G) = G^{g+n-1}$ for $n >0.$ 

\begin{definition}
The space $M_{0, n}(G)$ is defined to be the following left $GIT$ quotient.

\begin{equation}
M_{0, n}(G) = G \backslash G^n\\
\end{equation}
\end{definition}

We assign a copy of $M_{0, n(v)}(G)$ to each connected component $\Gamma_v \subset \hat{\Gamma},$ each edge in $\Gamma_v$ has a corresponding action of $G$ on $M_{0, n(v)}(G),$ acting on the right.   Each non-leaf edge $e \in E(\Gamma)$ therefore has an associated action of $G\times G$ on the product $M_{\hat{\Gamma}}(G) = \prod_{v \in V(\Gamma)} M_{0, n(v)}(G)$ defined by right actions corresponding to the two edges in $\hat{\Gamma}$ which map to $e$ under $\pi.$  We let $G^{E(\Gamma)}$ be the product of the diagonal subgroups $G \subset G\times G$ over the non-leaf edges in $\Gamma,$ and define $M_{\Gamma}(G)$ as follows.
 
\begin{equation}
M_{\Gamma}(G) = M_{\hat{\Gamma}}(G)/ G^{E(\Gamma)}\\
\end{equation}

\begin{proposition}\label{graph}
$M_{\Gamma}(G)$ depends only on the number of leaves $n$ and $\beta_1(\Gamma) = g.$ 
\end{proposition}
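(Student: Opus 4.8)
The plan is to reduce every semistable graph $[\Gamma,\gamma]$ with $\gamma\equiv 0$ to a single standard ``rose'' graph that manifestly depends only on $n$ and $\beta_1(\Gamma)=g$, using a gluing isomorphism for edge contractions. Since all the spaces in sight are affine and all actions are by the reductive group $G$, these quotients are $\mathrm{Spec}$ of rings of invariants, so it suffices to produce an explicit $G^{E(\Gamma)}$-equivariant isomorphism of the unquotiented spaces, which then descends.

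The heart of the argument is a \emph{gluing lemma}: for a non-loop interior edge $e=(v,w)$ there is an isomorphism $M_{\Gamma}(G)\cong M_{\Gamma/e}(G)$, equivariant for all remaining group actions. I would prove this by a direct computation on the two factors attached to $v$ and $w$. Write $M_{0,\eta(v)}(G)=G\backslash G^{a}$ ($a=\eta(v)$) with coordinates $(x_1,\dots,x_a)$, where $x_a$ is the half-edge $h_v$ of $e$, and likewise $M_{0,\eta(w)}(G)=G\backslash G^{b}$ with $(y_1,\dots,y_b)$ and $y_b=h_w$. Normalizing the two left $G$-actions by setting $x_a=y_b=1$ gives canonical isomorphisms onto $G^{a-1}\times G^{b-1}$ via $u_i:=x_a^{-1}x_i$ and $w_j:=y_b^{-1}y_j$. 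Under this change of coordinates the right diagonal action $G_e$ (sending $x_a\mapsto x_a h$, $y_b\mapsto y_b h$) becomes the free diagonal \emph{left} action $u_i\mapsto h^{-1}u_i$, $w_j\mapsto h^{-1}w_j$, whose quotient is exactly $G\backslash G^{a+b-2}=M_{0,\eta(u)}(G)$ for the contracted vertex $u$. The essential bookkeeping is to check that this matches $M_{\Gamma/e}(G)$ on the nose: the leftover $G_e$-quotient plays the role of the single left $G$-action at $u$, while for every other half-edge $x_i$ (resp. $y_j$) lying on another interior edge $e'$ the right $G_{e'}$-action is unchanged, since $x_i\mapsto x_i h'$ becomes $u_i\mapsto u_i h'$. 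Because all other factors $M_{0,\eta(v')}(G)$ and edge-quotients $G_{e'}$ are untouched, this local isomorphism assembles into the claimed global one.

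With the lemma in hand I would finish as follows. Semistability with $\gamma\equiv 0$ forces $\eta(v)\ge 3$ at every vertex, and contracting a non-loop edge preserves $\gamma\equiv 0$ and only increases valences, so we stay within the class of graphs. Choose a spanning tree of $\Gamma$ and contract its $\lvert V(\Gamma)\rvert-1$ edges one at a time; at each stage the edge being contracted is still a non-loop edge (a tree is acyclic), so the lemma applies repeatedly. The outcome is the rose $R_{g,n}$ with one vertex, $n$ leaves, and exactly $\lvert E(\Gamma)\rvert-(\lvert V(\Gamma)\rvert-1)=\beta_1(\Gamma)=g$ loops, whose construction $M_{R_{g,n}}(G)=(G\backslash G^{2g+n})/G^{g}$ (the $i$-th copy of $G$ acting by the right diagonal action on the two half-edges of the $i$-th loop) depends only on $n$ and $g$. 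Hence $M_{\Gamma}(G)\cong M_{R_{g,n}}(G)$ for every such $\Gamma$. Equivalently, for trivalent $\Gamma$ the same lemma shows $M_{\Gamma}(G)$ is invariant under the mutations of the earlier mutation-equivalence Proposition, since a mutation is a coincidence of contractions $\Gamma_i/e_i=\Gamma_{i+1}/e_{i+1}$.

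I expect the main obstacle to be the equivariance bookkeeping in the gluing lemma: the coordinate computation itself is short, but one must verify precisely that after the substitution $u_i=x_a^{-1}x_i$ the residual group actions line up — that the leftover $G_e$-quotient becomes the single left action at the contracted vertex while all neighboring right actions are carried along unchanged — so that the local isomorphism truly glues to a global one compatible with the full quotient by $G^{E(\Gamma)}$. A secondary point worth stating carefully is that $R_{g,n}$ may itself fail to be semistable for small $g,n$; this causes no difficulty, since its defining formula still makes sense and serves only as a common target scheme.
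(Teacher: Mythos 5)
Your proposal is correct and follows essentially the same route as the paper: both prove a contraction lemma for a non-loop edge by normalizing the left $G$-actions via $G\backslash G^{a}\cong G^{a-1}$ so that the residual edge action becomes the left diagonal action at the merged vertex (the paper phrases this through $[G\times G]/G\cong G$, you through the substitution $u_i=x_a^{-1}x_i$, which is the same computation in a different order), and then both reduce to the one-vertex rose $\Gamma_{g,n}$. The equivariance bookkeeping you flag as the main obstacle is exactly what the paper's proof checks, namely that the remaining right actions are carried along unchanged.
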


\begin{proof}
We show that $M_{\Gamma}(G) = M_{\Gamma'}(G)$ for any two graphs connected by an admissable map $\pi: \Gamma \to \Gamma'$, this proves the Proposition by Lemma \ref{mutantequivalent}.  We first treat the case of a tree $\tree$ with exactly two internal vertices $v, v' \in V(\tree)$ with valences $k, m$, and the admissable map $\pi: \tree \to \tree_w$ which takes $v, v'$ to a single vertex $w$. This forces $\tree_w$ to be a claw tree with $n = k + m -2$ leaves. 

\begin{equation}
M_{\tree}(G) = [M_{0,k}(G)\times M_{0, m}(G)]/G = [G \backslash G^k] \times [G\backslash G^m]/G\\
\end{equation}

The right hand action above is on the $k-$th and $k+1-$st components.  We identify $[G\times G] / G$ with $G$ as $G\times G$ varieties
using the isomorphisms defined by $(g, h) \to gh^{-1}$ and $g\to (g, Id)$. This implies that $M_{\tree}(G)$ is $G^2 \backslash G^{k+m-1},$ where the first component acts on the first $k$ indices on the left, and the second acts on the last $m-1$ indices on the left and the $k-th$ index on the right.  The variety $G\backslash G^k$ is likewise isomorphic to $G^{k-1}$ by the maps $(g_1, \ldots, g_k) \to (g_k^{-1}g_1, \ldots, g_k^{-1}g_{k-1})$ and $(g_1, \ldots, g_{k-1}) \to (g_1, \ldots, g_{k-1}, Id)$.  This takes the right $G^k$ action to the right $G^{k-1}$ action crossed with the left diagonal $G$ action, forming an isomorphism of $M_{\tree}(G)$ with $G \backslash G^{k +m -2}$, where the action is diagonal on the left.  This construction preserves the residual right $G^{k+m-2}$ action.  It follows that if $\phi: \Gamma \to \Gamma'$ is an admissable map which collapses a single edge in $E(\Gamma),$ we can appeal to the tree construction at this collapsed edge. The proposition then follows by induction.      
\end{proof}

\begin{corollary}
For an isomorphism derived from an admissable map $M_{\Gamma}(G) \cong M_{\Gamma'(G)}$, $\pi: \Gamma \to \Gamma'$, the pullback of a regular function $f \in  \C[M_{\Gamma}(G)]$ to $M_{\Gamma'}(G)$ is computed by plugging the identity $Id \in G$ in for the components associated to the edges collapsed by $\pi$. 
\end{corollary}

\begin{proof}
This follows by the proof of Proposition \ref{graph}, and the general fact that the isomorphism $X \cong [X \times G]/G$ 
is computed by the maps $x \to (x, 1), (x, h) \to hx.$
\end{proof}

 The graph $\Gamma_{g, n}$ has a single vertex, making $\hat{\Gamma}_{g, n}$ a claw tree with $2g + n$ leaves, and $M_{\hat{\Gamma}_{g, n}}(G) = G \backslash G^{2g + n}.$  In order to pass from $M_{\hat{\Gamma}_{g, n}}(G)$ to $M_{\Gamma_{g, n}}(G)$ we quotient by $G^g$ on the right hand side, where the $i-$th factor acts on the $i-$th and $g+i-$th components of $G^{2g +n}.$ Once again, by the identification $G^2/G = G,$ the resulting space is $G \backslash G^{g+n}$, where the first $g$ components have the adjoint action and the last $n$ components have the left action.  Formally, we define $M_{g, n}(G)$ to be $M_{\Gamma_{g, n}}(G)$.

The coordinate ring $\C[M_{0, n}(G)]$ has a direct sum decomposition into the following spaces. 

\begin{equation}
\C[M_{0, n}(G)] = \bigoplus_{\vec{\lambda} \in \Delta^n} (V(\lambda_1) \otimes \ldots \otimes V(\lambda_n))^G \otimes V(\lambda_1^*) \otimes \ldots \otimes V(\lambda_n^*)\\
\end{equation}

\noindent
The coordinate ring $\C[M_{\hat{\Gamma}}(G)]$ is the tensor product $\bigotimes_{v \in V(\Gamma)} \C[M_{0, n(v)}(G)]$, this is a direct
sum of the spaces below, where $\lambda: E(\hat{\Gamma}) \to \Delta$ is an assignment of dominant weights.  We let $e(v, k)$ be the $k-$th
edge incident on $v$, for some  ordering. 

\begin{equation}\label{component}
\bigotimes_{v \in V(\Gamma)}   (V(\lambda(e(v,1)) \otimes \ldots \otimes V(\lambda(e(v ,n(v)))^G \otimes V(\lambda(e(v, 1))^*) \otimes \ldots \otimes V(\lambda(e(v, n(v)))^*)\\
\end{equation}

\noindent
The coordinate ring $\C[M_{\Gamma}(G)] \subset \bigotimes_{v \in V(\Gamma)} \C[M_{0, n(v)}(G)]$ is obtained by taking right $G^{E(\Gamma)}$ invariants. 
One of the components in Equation \ref{component} contains $G^{E(\Gamma)}$ invariants if and only if $\lambda(e(v, i)) = \lambda(e(w, j))^*$ for any two edges $e(v, i)$, $e(w, j)$ which cover the same edge in $\Gamma$.  In this case the invariant space is the following tensor product. 

\begin{equation}
[\bigotimes_{v \in V(\Gamma)} (V(\lambda(e(v,1)) \otimes \ldots \otimes V(\lambda(e(v, n(v)))^G] \otimes V(\lambda(\ell_1)) \otimes \ldots \otimes V(\lambda(\ell_n))\\
\end{equation}

\noindent
The space $M_{\Gamma}(G)$ retains a $G^{L(\Gamma)}$ action, this is represented by $\ell_i$ components above.  The weight $\lambda(\ell_i)$ is dual
to the weight $\lambda(e(v, i))$ in the tensor product invariant space when the edge $e(v, i)$ contains $\ell_i$.

\subsection{An alternative construction of $M_{g, n}(G)$ and the character variety $\mathcal{X}(F_g, G)$}\label{alternative}

We will make use of another construction of $M_{g, n}(G)$ in Sections \ref{sl2theory} and \ref{compact}.  In what follows $\Gamma$ is a graph
with an orientation, namely edge $e \in E(\Gamma)$ is given a preferred direction which orders its endpoints $\delta(e) = (u, v).$  We consider the product
$G^{E(\Gamma)}$ with an action of $G^{V(\Gamma)}$ defined as follows.  For $\vec{g} = ( \ldots, g_e, \ldots) \in G^{E(\Gamma)}$ and $\vec{h} = (\ldots, h_u, \ldots, h_v, \ldots) \in G^{V(\Gamma)}$, the element $\vec{h}$ acts on $\vec{g}$ by the rule $\vec{h} \circ \vec{g} = (\ldots h_ug_eh_v^{-1}, \ldots).$  

\begin{lemma}
The $GIT$ quotient $G^{V(\Gamma)} \backslash G^{E(\Gamma)}$ is isomorphic to $M_{\Gamma}(G).$
\end{lemma}

\begin{proof}
 Recall that $M_{\Gamma}(G) = [\prod_{v \in V(\Gamma)} M_{0, n(v)}(G)]/G^{E(\Gamma)}$.  Using the definition of $M_{0, n}(G)$,  we may rewrite this product as $G^{V(\Gamma)} \backslash [\prod_{v \in V(\Gamma)} G^{n(v)}] /G^{E(\Gamma)}$ $= G^{V(\Gamma)} \backslash \prod_{e \in E(\Gamma)} [G\times G]/G.$ Now we use the fact that the isomorphism $[G \times G]/G \cong G$ identifies the residual $G \times G$ action on $[G \times G]/G$ with the standard $G\times G$ action on $G.$ 
Each pair $G \times G$ in the product above is unordered, in choosing an ordering, we determine an orientation on $\Gamma$, and an isomorphism with the $GIT$ quotient $G^{V(\Gamma)} \backslash G^{E(\Gamma)}.$
\end{proof}

When $n>0$, the space $M_{g, n}(G)$ is isomorphic to $G^{g+n -1}$ by the general principal $X\times G / G = X$ for any $G-$space $X,$ where the $G$ component has the left or right action. In the case $n = 0,$ we take any orientation on the graph $\Gamma_{g, 0}$, since $E(\Gamma_{g, 0})$ contains $g$ edges, and $V(\Gamma_{g, 0})$ has one element, $M_{\Gamma_{g, 0}}(G)$ is the $GIT$ quotient of $G^g$ by the conjugation action of $G$, this is the definition of the complex character variety $\mathcal{X}(F_g, G).$

\begin{equation}
M_{\Gamma_{g, 0}}(G) = \mathcal{X}(F_g, G)\\
\end{equation} 

\begin{remark}
In \cite{FL}, Florentino and Lawton define a similar construction of the space $M_{\Gamma}(G)$, which they call a non-commutative quiver variety.  In addition to our ''admissable maps", they also study the behavior of these spaces under a number of other operations on quivers, and they explore a similar construction for the character variety $\mathcal{X}(J, G)$ of a general discrete group $J.$
\end{remark}

\subsection{$P_{g, n}(G)$ and the universal configuration space $P_{0, n}(G)$}

The space $M_{g, n}(G)$ carries a $G^n$ action, with one copy of $G$ acting
for each leaf of $\Gamma_{g, n}.$  For a maximal unipotent subgroup $U \subset G$ we have
a $U^n$ action on $M_{g, n}(G)$ as well.  

\begin{definition}
The space $P_{g, n}(G)$ is defined to be the following non-reductive $GIT$ quotient. 

\begin{equation}
P_{g, n}(G) = M_{g, n}(G)/U^n
\end{equation}
\end{definition}

\noindent
The action of $U^n$ extends to the action of a reductive group $G^n$ on $M_{g, n}(G)$, so the quotient
$P_{g, n}(G)$ by this group is affine with a finitely generated coordinate ring, see \cite{Kir}. We use the notation
$P_{\Gamma}(G)$ when it is important to emphasize the graph $\Gamma.$

The space $P_{0, n}(G) = G \backslash [G/U]^n$ plays a special role in the algebraic geometry of the
group $G.$ Any flag variety $\mathcal{O}(\lambda) = Gv_{\lambda^*} \subset \mathbb{P}(V(\lambda^*))$
of $G$ can be constructed from $G/U$ by taking the $GIT$ quotient of this space by a maximal torus $T \subset G$
with respect to the character defined by $\lambda$.

\begin{equation}
[G/U]/_{\lambda} T = \mathcal{O}(\lambda)\\
\end{equation}

\noindent
The line bundle induced on $\mathcal{O}(\lambda)$ by this quotient is the canonical line bundle $\mathcal{L}_{\lambda}$, also
obtained by pulling back $\mathcal{O}(1)$ on $\mathbb{P}(V(\lambda^*))$.   It follows that a diagonal $GIT$ quotient $P_{\vec{\lambda}}(G) = G\backslash_{\vec{\lambda}} \prod \mathcal{O}(\lambda_i)$ can be obtained as a $T^n$ $GIT$ quotient of $P_{0, n}(G)$ with respect to the character
$\vec{\lambda}.$  The spaces $P_{\vec{\lambda}}(G)$ are called configuration spaces of $G-$flags, we refer to $P_{0, n}(G)$ as the universal configuration space accordingly.  The coordinate ring of $P_{0,n}(G)$ is a graded direct sum of the invariant spaces $(V(\lambda_1) \otimes \ldots \otimes V(\lambda_n))^G.$

\begin{equation}
\C[P_{0,n}(G)] = \bigoplus_{\vec{\lambda} \in \Delta^n}  V(\vec{\lambda})^G\\
\end{equation}

\section{Correlation and factorization on conformal blocks}\label{factorcor}

In this section we review the factorization and correlation constructions on conformal blocks.   For a more complete account see \cite{TUY}, the book of Ueno, \cite{U}, Chapters 3 and 4, and the papers of Beauville, \cite{B1} Part I, and Looijenga \cite{L} Section 4.

\subsection{Construction of conformal blocks}

Let $\mathfrak{g}$ be the affine Kac-Moody algebra associated to $\mathfrak{g}$, and let $H(0, L)$ be the integrable highest weight module of level $L$, weight $0$, and highest weight vector $v_{0, L}.$     For $\omega$ the longest root of $\mathfrak{g}$, $\Delta_L = \{\lambda \in \Delta | \lambda(H_{\omega}) \leq L\}$ denotes the level $L$ alcove of a Weyl chamber $\Delta$ of $\mathfrak{g}.$   For a collection of dominant weights $\vec{\lambda} = \{\lambda_1, \ldots, \lambda_n\} \subset \Delta_L$, we let $H(\vec{\lambda}^*, L) = H(0, L) \otimes V(\vec{\lambda}^*)$.  Following a construction of Beauville \cite{B1} Part I, (see also $10.1$ of \cite{LS}) the space of conformal blocks  $V_{C, \vec{p}}(\vec{\lambda}, L)$  is the space of invariants in the full vector space dual $H(\vec{\lambda}^*, L)^*$, with respect to an action of the Lie algebra $\C[C \setminus q]\otimes \mathfrak{g},$ for $q \in C$ some point $\neq p_i$.

\begin{equation}
V_{C, \vec{p}}(\vec{\lambda}, L) = [H(\vec{\lambda}^*, L)^*]^{\C[C \setminus \vec{p}]\otimes \mathfrak{g}}\\
\end{equation}

\begin{definition}
We define spaces of extended conformal blocks as follows. 

\begin{equation}
 W_{C, \vec{p}}(\vec{\lambda}, \vec{\lambda}^*, L) = V_{C, \vec{p}}(\vec{\lambda}, L) \otimes V(\vec{\lambda}^*)\\
\end{equation}

\end{definition}

\subsection{Correlation}\label{correlation}

The Lie algebra $\mathfrak{g}$ is naturally realized as the subalgebra $\C1\otimes \mathfrak{g} \subset \C[C \setminus q]\otimes \mathfrak{g},$ and the action of $\mathfrak{g}$ on $H(\vec{\lambda}^*, L)$ restricts to the expected action on $V(\vec{\lambda}^*) = V(\lambda_1^*)\otimes \ldots \otimes V(\lambda_n^*)$.  As a result we have the correlation map.

\begin{equation}
\kappa_{\vec{\lambda}, L}: V_{C, \vec{p}}(\vec{\lambda}, L) \to [V(\vec{\lambda})]^{\mathfrak{g}}\\ 
\end{equation}

\noindent
 When the genus of $C$ is $0$, the image of $\kappa_{\vec{\lambda}, L}$ is constructed in \cite{B1} Part I, Section 4, as the set of invariant tensors in $V(\vec{\lambda})$ which are annhilated by the $L+1$st power $e_{\vec{p}}^{L+1}$ of a nilpotent operator, $e_{\vec{p}}$. When $n=3$ the space $\mathcal{M}_{0,3}$ is the single point $(\mathbb{P}^1, 0, 1, \infty),$ in this case the space of conformal blocks $V_{0, 3}(\lambda, \eta, \mu, L)$ can be described explicitly. Each $\mathfrak{g}$ representation $V(\lambda)$ can be decomposed into isotypical components along the action of the copy of $sl_2(\C)$ in $\mathfrak{g}$ corresponding the longest root $\omega.$

\begin{equation}
V(\lambda) = \bigoplus_{i \geq 0} V(\lambda, i)\\ 
\end{equation} 

\noindent
We decompose $V(\lambda) \otimes V(\eta)\otimes V(\mu)$ in each component accordingly, and define the subspace $V_L$ as the sum of the components $V(\lambda, i)\otimes V(\eta, j) \otimes V(\mu, k)$ with $i + j + k \leq 2L.$ 

\begin{proposition}\label{ueno}(\cite{U}, Corollary 3.5.2)
\begin{equation}
V_{0, 3}(\lambda, \eta, \mu, L) = [V(\lambda) \otimes V(\eta)\otimes V(\mu)]^{\mathfrak{g}} \cap V_L\\
\end{equation}
\end{proposition}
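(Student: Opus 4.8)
The plan is to deduce the statement from Beauville's genus-zero description of the correlation map recalled just above: for $C = \mathbb{P}^1$ the map $\kappa_{\vec\lambda, L}$ is injective, and its image is exactly the set of $\mathfrak{g}$-invariant tensors killed by $e_{\vec p}^{L+1}$. Thus, after identifying $V_{0,3}(\lambda,\eta,\mu,L)$ with its image under $\kappa$, it suffices to prove the purely representation-theoretic identity
\[
\{ w \in [V(\lambda)\otimes V(\eta)\otimes V(\mu)]^{\mathfrak{g}} : e_{\vec p}^{L+1} w = 0\} = [V(\lambda)\otimes V(\eta)\otimes V(\mu)]^{\mathfrak{g}} \cap V_L.
\]
Since the nilpotent operator $e_{\vec p}$ is built from the raising operator $e_\omega$ of the longest-root copy $sl_2^\omega \subset \mathfrak{g}$ — the very subalgebra used to define the isotypical decomposition $V(\lambda) = \bigoplus_i V(\lambda,i)$ — the first step is to make $e_{\vec p}$ explicit for the points $0, 1, \infty$, writing it as $e_{\vec p} = c_1 e_\omega^{(1)} + c_2 e_\omega^{(2)} + c_3 e_\omega^{(3)}$ with coefficients $c_1, c_2, c_3$ determined by the three distinct marked points.

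Next I would reduce the whole question to $sl_2(\C)$. Every $\mathfrak{g}$-invariant is in particular $sl_2^\omega$-invariant, so I decompose such a $w$ into its $sl_2^\omega$-isotypical components $w = \sum_{i,j,k} w_{ijk}$, where $w_{ijk}$ lies in the $sl_2^\omega$-invariants of $V(\lambda,i)\otimes V(\eta,j)\otimes V(\mu,k)$. Because $e_{\vec p}$ acts only through $sl_2^\omega$ on the irreducible factors and trivially on the multiplicity spaces, it preserves each summand, so $e_{\vec p}^{L+1} w = 0$ if and only if $e_{\vec p}^{L+1} w_{ijk} = 0$ for every triple. The components with $i+j+k \le 2L$ are precisely those spanning $V_L$, so the proposition reduces to the single assertion: on the line of $sl_2^\omega$-invariants inside $V(i)\otimes V(j)\otimes V(k)$ (highest weights $i,j,k$), one has $e_{\vec p}^{L+1} = 0$ if and only if $i + j + k \le 2L$.

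I would prove this last assertion by a weight count together with a nonvanishing computation. The operator $e_{\vec p}$ raises the $sl_2^\omega$-weight by $2$, an invariant $w$ has weight $0$, and the maximal weight occurring in $V(i)\otimes V(j)\otimes V(k)$ is $i+j+k$; hence $e_{\vec p}^m w = 0$ as soon as $2m > i+j+k$. Conversely, writing $N = (i+j+k)/2$, the vector $e_{\vec p}^N w$ has top weight and so is a scalar multiple of the highest weight vector $v_i \otimes v_j \otimes v_k$; expanding $e_{\vec p}^N$ multinomially, this scalar is a polynomial in $c_1, c_2, c_3$ whose coefficients are positive matrix elements of powers of $e_\omega$ paired against the nonzero components of $w$. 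The equivalence $e_{\vec p}^{L+1} w = 0 \iff L \ge N \iff i+j+k \le 2L$ then follows once this scalar is shown to be nonzero.

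The main obstacle is exactly this last nonvanishing: showing $e_{\vec p}^N w \ne 0$, i.e. that the twisted raising operator actually reaches the top weight vector rather than annihilating $w$ prematurely. This is where the distinctness of the three points $0, 1, \infty$ enters, through the coefficients $c_i$. I expect to establish it either by checking that the relevant polynomial in the $c_i$ is not identically zero (using $w \ne 0$ and the positivity of the $e_\omega$-matrix elements) and then invoking $\mathrm{PGL}_2$-invariance to reduce any distinct configuration to $0,1,\infty$, or by a direct Clebsch--Gordan computation of the coefficient for the $sl_2(\C)$ three-point invariant. Everything else is bookkeeping with the isotypical decomposition and the cited correlation statement.
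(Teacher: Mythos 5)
The paper offers no internal proof to compare against: Proposition \ref{ueno} is stated with a citation to Ueno \cite{U} (the underlying computation is essentially Beauville's three-point kernel calculation), so you are supplying an argument the paper omits. Your reduction is the standard one and is correct as far as it goes: Beauville's genus-zero description identifies $V_{0,3}(\lambda,\eta,\mu,L)$ with the $\mathfrak{g}$-invariants killed by $e_{\vec{p}}^{L+1}$; the operator acts only through the longest-root copy of $sl_2(\C)$ and so preserves the isotypic decomposition; and on each triple $(i,j,k)$ the weight count gives the containment $[V(\lambda)\otimes V(\eta)\otimes V(\mu)]^{\mathfrak{g}}\cap V_L \subseteq \ker e_{\vec{p}}^{L+1}$ essentially for free.

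The genuine gap is the step you flag yourself and do not carry out: the reverse containment is exactly the nonvanishing $e_{\vec{p}}^{N}u \neq 0$ for the one-dimensional space of $sl_2$-invariants $u$ in $W_i\otimes W_j\otimes W_k$ with $N=(i+j+k)/2$, and this is the entire content of the proposition, not a loose end. Moreover your first proposed repair does not close it as stated: knowing the homogeneous degree-$N$ polynomial $P(c_1,c_2,c_3)$ is not identically zero only gives nonvanishing for \emph{generic} coefficient triples, and the M\"obius action on marked points does not act linearly on $(c_1,c_2,c_3)$ (only the affine subgroup does, and its orbits on distinct triples have codimension one), so ``reduce to $0,1,\infty$'' does not convert generic nonvanishing into nonvanishing at a given configuration. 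The argument can be completed in two honest ways: (a) note that each monomial coefficient of $P$ is a strictly positive multiple of a single weight coefficient of $u$, so $u\neq 0$ forces $P\not\equiv 0$; then invoke constancy of $\dim V_{0,3}$ over configurations of distinct points together with the already-proved containment to upgrade generic equality of the two subspaces to equality everywhere; or (b) do the Clebsch--Gordan computation directly, which yields $e_{\vec{p}}^{N}u = C\,(z_1-z_2)^{(i+j-k)/2}(z_1-z_3)^{(i+k-j)/2}(z_2-z_3)^{(j+k-i)/2}\,v_i\otimes v_j\otimes v_k$ with $C\neq 0$, manifestly nonzero for pairwise distinct points (your own example $i=j=1$, $k=0$ already shows distinctness is genuinely needed). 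Until one of these is written down, only one inclusion has been proved.
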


We will make use of the map obtained from $\kappa_{\vec{\lambda}, L}$ by tensoring both sides of the above expression with $V(\vec{\lambda}^*),$ which we also call $\kappa_{\vec{\lambda}, L}$ when no confusion results.  The action by $\mathfrak{g}$ is on the left hand side of $V(\vec{\lambda}, \vec{\lambda}^*)$.

\begin{equation}\label{vectorcorrelation}
\kappa_{\vec{\lambda}, L}: W_{C, \vec{p}}(\vec{\lambda}, \vec{\lambda}^*, L) \to [V(\vec{\lambda}, \vec{\lambda}^*)]^{\mathfrak{g}}\\
\end{equation}

\subsection{Factorization}

 For any singularity $q \in C$, one can form the partial normalization $(\tilde{C}, q_1, q_2),$ where $q_1, q_2$ are the two new points introduced by splitting $q.$  Factorization expresses how the spaces of conformal blocks behave under this operation. 

\begin{theorem}[\cite{TUY}]\label{fact}
\begin{equation}
V_{C, \vec{p}}(\vec{\lambda}, L) \cong \bigoplus_{\alpha \in \Delta_L} V_{\tilde{C}, \vec{p}, q_1, q_2}(\vec{\lambda}, \alpha, \alpha^*, L)\\
\end{equation}  
\end{theorem}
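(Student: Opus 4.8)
The plan is to realize the claimed decomposition as an explicit \emph{sewing isomorphism}, built from the contragredient pairing between the conjugate integrable modules attached to the two points $q_1, q_2$ created by normalizing the node $q$. Recall that conformal blocks are the invariants $V_{C,\vec p}(\vec\lambda,L)=[H(\vec\lambda^*,L)^*]^{\C[C\setminus\vec p]\otimes\mathfrak g}$. The first step is local: I would compare the current algebra of the nodal curve with that of its normalization. A function regular on $C\setminus\vec p$ pulls back to a function on $\tilde C$ that is regular away from $\vec p$ and takes \emph{equal} values at $q_1$ and $q_2$; allowing poles at $q_1,q_2$ enlarges this to all of $\C[\tilde C\setminus\{\vec p,q_1,q_2\}]$. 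Thus $\C[C\setminus\vec p]\otimes\mathfrak g$ sits inside $\C[\tilde C\setminus\{\vec p,q_1,q_2\}]\otimes\mathfrak g$ as the subalgebra whose expansions at $q_1$ and $q_2$ are matched by the node. This is the structural fact that forces the ``diagonal'' identification across the node and ultimately produces the sum over $\alpha$.

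Next I would construct, for each $\alpha\in\Delta_L$, a sewing map
\begin{equation}
V_{\tilde C,\vec p,q_1,q_2}(\vec\lambda,\alpha,\alpha^*,L)\longrightarrow V_{C,\vec p}(\vec\lambda,L).
\end{equation}
Given a block $\Psi$ on $\tilde C$, which is a functional on $H(\vec\lambda^*,L)\otimes H(\alpha^*,L)\otimes H(\alpha,L)$, I contract the last two tensor slots using the canonical pairing $H(\alpha,L)\otimes H(\alpha^*,L)\to\C$: choosing homogeneous dual bases $\{w_i\}$, $\{w_i^\vee\}$ graded by the Sugawara operator $L_0$, set $\hat\Psi(\,\cdot\,)=\sum_i\Psi(\,\cdot\otimes w_i\otimes w_i^\vee)$. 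The grading is what makes this well defined: although $H(\alpha,L)$ is infinite dimensional, each $L_0$-eigenspace is finite dimensional, so the sum truncates degree by degree and $\hat\Psi$ lands in $H(\vec\lambda^*,L)^*$.

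The crux is then to check that $\hat\Psi$ is invariant under the \emph{larger} algebra $\C[C\setminus\vec p]\otimes\mathfrak g$, not merely under the algebra of the normalization. Using the first step, an element regular at the node acts through matched expansions at $q_1$ and $q_2$, and one verifies that the contragredient pairing is invariant under the resulting diagonal current algebra that ``propagates through'' the node; the modes that would obstruct invariance cancel in pairs against the two slots of the gluing tensor. Summing the maps over the finite set $\alpha\in\Delta_L$ gives a map from the right-hand side to $V_{C,\vec p}(\vec\lambda,L)$, and I would build the inverse by decomposing a block on $C$ according to the weight of the state ``at the node,'' the node module carrying a Peter--Weyl-type decomposition $\bigoplus_{\alpha\in\Delta_L}H(\alpha,L)\otimes H(\alpha^*,L)$ at level $L$. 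A direct computation with the dual bases shows the two maps are mutually inverse.

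I expect the main obstacle to be this invariance/convergence step: because the gluing tensor is an infinite sum, rigor requires the full conformal (Virasoro) grading to control finiteness in each degree and to confirm that $\hat\Psi$ is annihilated by \emph{all} modes of $\C[C\setminus\vec p]\otimes\mathfrak g$, including those supported near the node. Equivalently, in the smoothing picture one studies the limit of blocks as the smoothing parameter tends to $0$; flatness of the conformal block bundle over $\bar{\mathcal{M}}_{g,n}$ (property (1) above) guarantees the dimensions match, but identifying the limit with this particular direct sum still rests on the local sewing analysis.
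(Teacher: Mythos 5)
First, note that the paper does not actually prove this statement: it is quoted from \cite{TUY}, and the only thing the paper supplies is the map realizing the isomorphism, namely $F_{\alpha}(v) = v\otimes O_{\alpha^*}$ with $O_{\alpha^*}$ the canonical element of the \emph{finite-dimensional} space $V(\alpha^*)\otimes V(\alpha)$ sitting at the top of $H(\alpha^*,L)\otimes H(\alpha,L)$, followed by dualization and passage to invariants. Your local analysis of the gauge algebras (functions on $C\setminus\vec{p}$ pull back to functions on $\tilde{C}$ regular at $q_1,q_2$ with equal values there, so $\C[C\setminus\vec{p}]\otimes\mathfrak{g}$ is the matched subalgebra) is correct and is indeed the structural input.

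The genuine gap is in your gluing map. As written, $\hat\Psi(u)=\sum_i\Psi(u\otimes w_i\otimes w_i^\vee)$ is an infinite sum of complex numbers indexed by a basis of the infinite-dimensional module $H(\alpha,L)$, and nothing truncates it: finite-dimensionality of the $L_0$-eigenspaces makes each graded piece of the gluing tensor finite, but you are summing over all degrees while evaluating on a fixed $u$, so ``the sum truncates degree by degree'' does not make the map well defined. The standard repair, and the one in \cite{TUY}, is to insert the smoothing parameter, replacing the gluing tensor by $\sum_d q^d \sum_{\deg w_i = d} w_i\otimes w_i^\vee$; this produces a family of blocks on the smoothed curves $C_q$, and the nodal curve is the fiber $q=0$, where only the degree-zero part of the gluing tensor survives. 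That degree-zero part is precisely $O_{\alpha^*}\in V(\alpha^*)\otimes V(\alpha)$, i.e., the correctly implemented version of your map collapses to the paper's $F_{\alpha}$. Separately, your closing claim that a direct computation with dual bases shows the two maps are mutually inverse hides the actual content of the theorem: one must show that an invariant functional on $H(\vec\lambda^*,L)\otimes H(\alpha^*,L)\otimes H(\alpha,L)$ is already determined by its restriction to the top $V(\alpha^*)\otimes V(\alpha)$ (the negative modes generating the integrable modules are traded against the gauge action), and that integrability at level $L$ is what restricts the sum to $\alpha\in\Delta_L$. Neither step is carried out in your sketch, and the paper itself defers both to \cite{TUY}.
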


We mention now that if a curve is not connected, $(C, \vec{p}) = (C_1, \vec{p_1}) \cup (C_2, \vec{p}_2),$ the space of conformal blocks is a tensor product. 

\begin{equation}
V_{C_1 \cup C_2, \vec{p}_1, \vec{p}_2}(\vec{\lambda}_1, \vec{\lambda_2}, L) =  V_{C_1, \vec{p}_1}(\vec{\lambda}_1, L) \otimes V_{C_2, \vec{p}_2}(\vec{\lambda}_2, L)\\
\end{equation}

\noindent
The map in Theorem \ref{fact} is constructed from the following map,  recall the vector $O_{\alpha} \in V(\alpha, \alpha^*)$.

\begin{equation}
F_{\alpha}: H(\vec{\lambda}^*, L) \to H(\vec{\lambda}^*, \alpha^*, \alpha, L)\\
\end{equation}

\begin{equation}
F_{\alpha}(v) = v\otimes O_{\alpha^*}\\
\end{equation}

\noindent
After dualizing,  invariants are taken by the respective algebras to produce the factorization map.

\subsection{Factorization for the spaces $W_{C, \vec{p}}(\vec{\lambda}, \vec{\lambda}^*, L)$ }\label{wfactor}

In order to boost factorization to the spaces $W_{C, \vec{p}}(\vec{\lambda}, \vec{\lambda}^*, L)$ we use the following map. 

\begin{equation}
F_{\alpha}: H(\vec{\lambda}^*, \vec{\lambda},L) \to H(\vec{\lambda}^*, \vec{\lambda}, L) \otimes V(\alpha^*, \alpha)\otimes V(\alpha, \alpha^*)\\
\end{equation}

\begin{equation}
F_{\alpha}(v) = v\otimes O_{\alpha^*} \otimes O_{\alpha}\\
\end{equation}

\noindent
Here $O_{\alpha^*}\otimes O_{\alpha}$ are tensors on the first and third, and second and fourth indices  of $ V(\alpha^*, \alpha)\otimes V(\alpha, \alpha^*)$. 

\begin{lemma}
For a curve $C$ with singularity $q \in C$, and normalization $(\tilde{C}, q_1, q_2)$,

\begin{equation}
W_{C, \vec{p}}(\vec{\lambda}, \vec{\lambda}^*, L) = \bigoplus_{\alpha \in \Delta_L } W_{\tilde{C}, \vec{p}, q_1, q_2}(\vec{\lambda}, \vec{\lambda}^*, \alpha, \alpha^*, L)^G.\\
\end{equation}

\end{lemma}

\begin{proof}
We use the maps $F_{\alpha}$ and pass to $G$ invariants, where $G$ acts on the second and fourth indices of $H(\vec{\lambda}^*, \vec{\lambda}, L) \otimes V(\alpha^*, \alpha)\otimes V(\alpha, \alpha^*)$,  we can do this because $O_{\alpha}$ is an invariant. We then dualize the map $F_{\alpha}$ to obtain $F_{\alpha}^*: [H(\vec{\lambda}^*, \vec{\lambda}, L) \otimes V(\alpha^*, \alpha)\otimes V(\alpha, \alpha^*)^G]^* \to  H(\vec{\lambda}^*, \vec{\lambda},L)^*$.   Theorem \ref{fact} then implies that this map gives an injection $F_{\alpha}^*: W_{\tilde{C}, \vec{p}, q_1, q_2}(\vec{\lambda}, \vec{\lambda}^*, \alpha, \alpha^*, L)^G =$ $V_{\tilde{C}, \vec{p}, q_1, q_2}(\vec{\lambda}, \alpha, \alpha^* L) \otimes V(\vec{\lambda}^*) \to$ $V_{C, \vec{p}}(\vec{\lambda}, L) \otimes V(\vec{\lambda}^*) =$ $W_{C, \vec{p}}(\vec{\lambda}, \vec{\lambda}^*, L)$  and that the sum of these maps gives an isomorphism. 
\end{proof}

\section{The coordinate ring of $\bar{B}_{C, \vec{p}}(G)$}\label{algebra}

In this section we construct a commutative $\C$ algebra $W_{C, \vec{p}}(G)$ for every curve $(C, \vec{p}) \in \bar{\mathcal{M}}_{g, n}.$
As a vector space, $W_{C, \vec{p}}(G)$ is a direct sum of the spaces of extended conformal blocks $W_{C, \vec{p}}(\vec{\lambda}, \vec{\lambda}^*, L)$ constructed in Section \ref{factorcor}.  The multiplication operation on $W_{C, \vec{p}}(G)$ is graded
with respect to the level $L$, this gives us the necessary information to define the spaces $B_{C, \vec{p}}(G)$ and $\bar{B}_{C, \vec{p}}(G)$ as follows. 

\begin{definition}
\begin{equation}
\bar{B}_{C, \vec{p}}(G) = Spec(W_{C, \vec{p}}(G))\\
\end{equation}

\begin{equation}
B_{C, \vec{p}}(G) = Proj(W_{C, \vec{p}}(G))\\
\end{equation}

\end{definition}

We finish this section by showing that $W_{\mathbb{P}^1, \vec{p}}(G)$ is a Rees algebra of $\C[M_{0, n}(G)]$.  This implies
that $B_{\mathbb{P}^1, \vec{p}}(G)$ has $M_{0, n}(G)$ as a dense, open subspace. 
  
\subsection{The multiplication map}\label{mmap}

 The map $q_{\eta}^{\alpha, \beta}: V(\eta, \eta^*) \to V(\alpha, \alpha^*) \otimes V(\beta, \beta^*)$ from Subsection \ref{Gring} lifts to a map $[\mu_{\eta}^{\alpha, \beta }]^*: H(\eta, \eta^*, L+K) \to H(\alpha, \alpha^*, L)\otimes  H(\beta, \beta^*, K)$ by pairing the unique
$\hat{\mathfrak{g}}$ map $H(0, L+K) \to H(0, L) \otimes H(0, K)$ which sends $v_{0, L+K} \to v_{0, L}\otimes v_{0, K}$ with $q_{\eta}^{\alpha, \beta}.$ This map is generalized as expected to the $n-$marked case. 

\begin{equation}
\mu^*: \bigoplus_{\vec{\eta} \leq \vec{\alpha} + \vec{\beta}} H(\vec{\eta}, \vec{\eta}^*, L +K) \to H(\vec{\alpha}, \vec{\alpha}^*, L) \otimes H(\vec{\beta}, \vec{\beta}^*, K)\\
\end{equation}

\noindent
Here $\leq$ is the dominant weight ordering.  Dualizing and taking invariants by the Lie algebra $\C[C \setminus q]\otimes \mathfrak{g}$ then yields the following multiplication map. 

\begin{equation}
\mu: W_{C, \vec{p}}(\vec{\alpha}, \vec{\alpha}^*, L) \otimes W_{C, \vec{p}}(\vec{\beta}, \vec{\beta}^*, L) \to \bigoplus_{\vec{\eta} \leq \vec{\alpha} + \vec{\beta}} W_{C, \vec{p}}(\vec{\eta}, \vec{\eta}^*, L)\\
\end{equation}

Next we place  the spaces $W_{C, \vec{p}}(\vec{\lambda}, \vec{\lambda}^*, L)$   in the context of the algebraic geometry of principal bundles.  We consider the scheme $Q\times G^n$, where $Q$ is the affine Grassmannian variety for the group $G,$ see \cite{LS}, Section 10.  For the definition of this space and a description of its place in the theory of principal bundles.   The total coordinate ring of $Q\times G^n$ is the vector space $\bigoplus_{\vec{\lambda}, L} H(0, L)^* \otimes V(\vec{\lambda}, \vec{\lambda}^*).$ The multiplication map on this algebra is given by dualizing the maps $q_{\eta}^{\alpha, \beta}$ on each $G$ component of the product, and by dualizing the map of $\hat{\mathfrak{g}}$ representations given by sending the highest weight vector $v_{0,L+ K}$ to $v_{0,L} \otimes v_{0,K}$, see \cite{Ku}, Section 1.  The space of conformal blocks $V_{C, \vec{p}}(\vec{\lambda}, L)$ is a subspace of $H(0, L)^* \otimes V(\vec{\lambda}),$ following \cite{LS}, Section 10.   An identical argument to their proof of $1.2.1$ shows that $W_{C, \vec{p}}(\vec{\lambda}, \vec{\lambda}^*, L)$ is a space of $\C[C \setminus q]\otimes \mathfrak{g}$ invariants in the coordinate ring of $Q \times G^n$.  This shows that the bilinear operation we've constructed above agrees with the multiplication operation in the $\C[C \setminus q]\otimes \mathfrak{g}$ -invariant subring of the total coordinate ring of $Q \times G^n.$  The vector space $W_{C, \vec{p}}(G) = \bigoplus_{\vec{\lambda}, L} W_{C, \vec{p}}(\vec{\lambda}, \vec{\lambda}^*, L)$, along with the multiplication map $\mu$ give our ring.  From this point of view, $W_{C, \vec{p}}(G)$ is the total coordinate ring of the quotient stack $\mathcal{B}_{C, \vec{p}}(G) = L_C(G) \backslash [Q\times G^n]$, where $L_C(G)$ is the ind-group used in \cite{LS}, Section 10.

\subsection{Extending correlation}\label{ecor}

Next, we extend the correlation map  on the spaces $W_{C, \vec{p}}(\vec{\lambda}, \vec{\lambda}^*, L)$ to a map of commutative algebras.   We apply the construction in Subsection \ref{nilval} to the operator $e_{\vec{p}}$  defined by Beauville in \cite{B1} Part I, Section 4.  As defined, $e_{\vec{p}}$ acts on the spaces $V(\vec{\lambda})^{\mathfrak{g}}$,  by tensoring with $V(\vec{\lambda}^*)$, we can define a left hand side action of $e_{\vec{p}}$ on the components of the coordinate ring $\C[M_{0, n}(G)].$  We let $v_{\vec{p}}$ be the valuation defined by the operator $e_{\vec{p}}.$

\begin{proposition}\label{g0rees}
The algebra $W_{\mathbb{P}^1, \vec{p}}(G)$ is the Rees algebra of $\C[M_{0, n}(G)]$ given by the valuation $v_{\vec{p}}.$
\end{proposition}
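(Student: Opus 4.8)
The plan is to build an explicit isomorphism of graded algebras out of the extended correlation maps $\kappa_{\vec{\lambda}, L}$ of Equation \ref{vectorcorrelation}. First I would pin down both sides as decomposed vector spaces. By the Peter-Weyl decomposition of Subsection \ref{Gring}, $\C[M_{0, n}(G)] = \C[G^n]^G = \bigoplus_{\vec{\lambda}} [V(\vec{\lambda}, \vec{\lambda}^*)]^{\mathfrak{g}}$, where the invariants are taken for the left diagonal $G$-action on the unstarred factors. The nilpotent derivation $e_{\vec{p}}$ determines, via Subsection \ref{ecor}, the exhaustive filtration $(v_e)_{\leq L}(\C[M_{0, n}(G)]) = \ker(e_{\vec{p}}^{L+1})$ (exhaustive because $e_{\vec{p}}$ acts nilpotently on each finite-dimensional summand), and its Rees algebra is $R = \bigoplus_{L \geq 0} (v_e)_{\leq L}(\C[M_{0, n}(G)])$, graded by $L$ with multiplication inherited from $\C[M_{0, n}(G)]$. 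Since the target of each $\kappa_{\vec{\lambda}, L}$ is exactly the summand $[V(\vec{\lambda}, \vec{\lambda}^*)]^{\mathfrak{g}}$, assembling these maps over all $\vec{\lambda}$ and reading the level $L$ as the grading degree produces a graded linear map $\kappa: W_{\mathbb{P}^1, \vec{p}}(G) \to R$.

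The second step is to see that $\kappa$ is a graded linear isomorphism. Here I would invoke the genus-$0$ form of Beauville's theorem from \cite{B1} recalled in Subsection \ref{correlation} (the $n = 3$ case being Proposition \ref{ueno}): it identifies $W_{\mathbb{P}^1, \vec{p}}(\vec{\lambda}, \vec{\lambda}^*, L)$ with the subspace of $[V(\vec{\lambda}, \vec{\lambda}^*)]^{\mathfrak{g}}$ annihilated by $e_{\vec{p}}^{L+1}$, so that $\kappa_{\vec{\lambda}, L}$ is injective with image $(v_e)_{\leq L}([V(\vec{\lambda}, \vec{\lambda}^*)]^{\mathfrak{g}})$. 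Summing over $\vec{\lambda}$, the level-$L$ component of $W_{\mathbb{P}^1, \vec{p}}(G)$ maps isomorphically onto the $L$-th graded summand $(v_e)_{\leq L}(\C[M_{0, n}(G)])$ of $R$, and since these images sit in distinct degrees $\kappa$ is a bijection.

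The third and central step is multiplicativity: I would verify $\kappa \circ \mu = m \circ (\kappa \otimes \kappa)$, where $\mu$ is the product on $W_{\mathbb{P}^1, \vec{p}}(G)$ from Subsection \ref{mmap} and $m$ is the product of $\C[M_{0, n}(G)]$ used to define the Rees multiplication on $R$. Both products are assembled from the same intertwiners $q_{\eta}^{\alpha, \beta}$ of Subsection \ref{Gring}; the only difference is that $\mu$ lifts them to $\hat{\mathfrak{g}}$ by recording the highest weight vectors through $v_{0, L+K} \mapsto v_{0, L} \otimes v_{0, K}$, data that correlation suppresses when it passes from $\C[\mathbb{P}^1 \setminus q] \otimes \mathfrak{g}$-invariants to $\C 1 \otimes \mathfrak{g}$-invariants. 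Chasing these definitions should collapse $\kappa \circ \mu$ onto the plain $q_{\eta}^{\alpha, \beta}$-product, which is precisely $m$.

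The hard part will be reconciling the multiplication with the grading: one must ensure that the product of a level-$L$ block and a level-$K$ block lands in $(v_e)_{\leq L+K}$ and in no smaller filtration level, so that $\kappa$ is a graded (not merely set-theoretic) ring map and no collapsing of degrees occurs. This is exactly the valuation estimate of Subsection \ref{ecor}: for $a, b$ with $e_{\vec{p}}^{L}(a), e_{\vec{p}}^{K}(b) \neq 0$ and $e_{\vec{p}}^{L+1}(a) = e_{\vec{p}}^{K+1}(b) = 0$, the Leibniz expansion gives $e_{\vec{p}}^{L+K}(ab) = \binom{L+K}{L} e_{\vec{p}}^{L}(a) e_{\vec{p}}^{K}(b) \neq 0$ (using that $\C[M_{0, n}(G)]$ is a domain), whence $v_e(ab) = v_e(a) + v_e(b)$. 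This simultaneously makes the Rees product $(v_e)_{\leq L} \cdot (v_e)_{\leq K} \subseteq (v_e)_{\leq L+K}$ well defined and preserves degrees faithfully. Combining the image description and injectivity of Step 2 with the multiplicativity of Step 3 then yields the algebra isomorphism $W_{\mathbb{P}^1, \vec{p}}(G) \cong R$ asserted in the proposition.
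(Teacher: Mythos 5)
Your proposal is correct and follows essentially the same route as the paper's proof: identify the level-$L$ piece of $W_{\mathbb{P}^1,\vec{p}}(G)$ with $\ker(e_{\vec{p}}^{L+1})$ via Beauville's description of the image of the correlation map, and then check multiplicativity by observing that both products are built from the same intertwiners $q_{\eta}^{\alpha,\beta}$, which is exactly the commutative diagram in the paper's argument. Your added care about the Leibniz/valuation estimate from Subsection \ref{ecor} is a correct elaboration of what the paper leaves implicit.
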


\begin{proof}
The operator $e_{\vec{p}}$ defined in \cite{B1} is a $\C-$linear derivation as constructed. Furthermore, the space annhilated by $e_{\vec{p}}^{L+1}$ is canonically identified with $W_{C, \vec{p}}(L) = \bigoplus_{\vec{\lambda}} W_{\mathbb{P}^1, \vec{p}}(\vec{\lambda}, \vec{\lambda}^*, L)$.   By definition of $\mu, q,$ and $m$ the following diagram commutes. 

$$
\begin{CD}
H(\vec{\alpha}, \vec{\alpha}^*, L) \otimes H(\vec{\beta}, \vec{\beta}^*, L) @<\mu^*<< \bigoplus_{\vec{\eta} \leq \vec{\alpha} + \vec{\beta}} H(\vec{\eta}, \vec{\eta}^*, L)\\
@AAA @AAA \\
V(\vec{\alpha}^*, \vec{\alpha}) \otimes V(\vec{\beta}^*, \vec{\beta}) @<q << \bigoplus_{\vec{\eta} \leq \vec{\alpha} + \vec{\beta}} V(\vec{\eta}^*, \vec{\eta})\\
\end{CD}
$$

By dualizing and taking invariants we obtain the following commutative diagram. 

$$
\begin{CD}
W_{C, \vec{p}}(\vec{\alpha}, \vec{\alpha}^*, L) \otimes W_{C, \vec{p}}(\vec{\beta}, \vec{\beta}^*, K) @> \mu>> \bigoplus_{\vec{\eta} \leq \vec{\alpha} + \vec{\beta}} W_{C, \vec{p}}(\vec{\eta}, \vec{\eta}^*, L+K)\\
@V \kappa_{\vec{\alpha}, L} \otimes \kappa_{\vec{\beta}, K}VV @V \kappa_{\vec{\eta}, L+K} VV\\
V(\vec{\alpha}^*, \vec{\alpha}) \otimes V(\vec{\beta}^*, \vec{\beta}) @> m >> \bigoplus_{\vec{\eta} \leq \vec{\alpha} + \vec{\beta}} V(\vec{\eta}^*, \vec{\eta})\\
\end{CD}
$$

This shows that the multiplication operation induced on these spaces from $\C[M_{0, n}(G)]$ agrees with multiplication in $W_{\mathbb{P}^1, \vec{p}}(G)$.   Note that the diagram commutes for arbitrary curves. When $C =\mathbb{P}^1$, this defines a $1-1$ map of algebras, $\kappa_{\mathbb{P}^1, \vec{p}}: W_{\mathbb{P}^1, \vec{p}}(G) \to \C[M_{0, n}(G)]\otimes \C[t].$
\end{proof}

\section{Factorization and the fiber over a stable curve}\label{stablecurvesection}

We prove Theorem \ref{ringfact}, and extend Proposition \ref{g0rees} to show that $W_{C, \vec{p}}(G)$ and $V_{C, \vec{p}}(G)$ are Rees algebras of $\C[M_{g, n}(G)]$ and $\C[P_{g, n}(G)]$ respectively, when $(C, \vec{p})$ is a stable union of projective lines.

\subsection{Factorization for the algebra $W_{C, \vec{p}}(G)$}

The purpose of this subsection is to prove the following proposition. 

\begin{proposition}\label{ringfact}
Let $(\tilde{C}, \vec{p}, q_1, q_2)$ be the partial normalization of a stable curve $(C, \vec{p})$ at a double point $q \in C$, then the following equation holds. 

\begin{equation}
W_{C, \vec{p}}(G) = W_{\tilde{C}, \vec{p}, q_1, q_2}(G)^{G\times \C^*}\\
\end{equation}

\noindent
Here the action of $G$ on $W_{\tilde{C}, \vec{p}, q_1, q_2}(G)$ is on the right hand side of the factors associated to $q_1$ and $q_2,$ and the action of $\C^*$ has character equal to the difference of the levels on on the algebras corresponding to the connected components of $\tilde{C}$.
 
\end{proposition}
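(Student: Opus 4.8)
The plan is to prove this by identifying both sides as invariant subrings of a common "ambient" algebra and then showing that the $G \times \C^*$ invariants on the normalized side cut out exactly the conformal blocks on $C$. The essential input is the factorization isomorphism of Theorem \ref{fact}, boosted to the extended blocks $W$ as in the last subsection, together with the description of $W_{C, \vec{p}}(G)$ as the $\C[C \setminus q] \otimes \mathfrak{g}$-invariant subring of the total coordinate ring of $Q \times G^n$ from Subsection \ref{mmap}.

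First I would fix a grading bookkeeping. The algebra $W_{\tilde{C}, \vec{p}, q_1, q_2}(G)$ carries, in addition to the level grading by $L$, a grading by the two weights $\alpha, \alpha^*$ attached to the new points $q_1, q_2$; the residual $G$-action sits on the factors $V(\alpha), V(\alpha^*)$ associated to $q_1, q_2$. I would show that on each graded piece $W_{\tilde{C}, \vec{p}, q_1, q_2}(\vec{\lambda}, \vec{\lambda}^*, \alpha, \beta, L)$ the $G$ acting diagonally on the $q_1, q_2$ slots produces invariants only when $\beta = \alpha^*$, since $[V(\alpha) \otimes V(\beta)]^G$ is nonzero precisely when $\beta = \alpha^*$, in which case it is one-dimensional. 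This is exactly the pairing that the vector $O_\alpha \in V(\alpha, \alpha^*)$ realizes. So the $G$-invariants collapse the double sum over $(\alpha, \beta)$ to the single diagonal sum over $\alpha \in \Delta_L$, matching the right-hand side of the boosted factorization isomorphism stated just before this section. The $\C^*$-action, whose character is the difference of the levels on the two components of $\tilde{C}$ when $\tilde{C}$ is disconnected, forces those two levels to agree; when $\tilde{C}$ is connected (the case of a non-separating node) this $\C^*$ condition is automatically satisfied and plays no role. I would treat these two cases in parallel, pointing out that in the disconnected case the tensor-product formula for disconnected curves reduces the statement to matching levels across components.

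Next I would check that this identification of graded vector spaces is an isomorphism of algebras, not merely of spaces. This is where I expect the real work to lie. The multiplication on both sides is built from the maps $q_\eta^{\alpha, \beta}$ dualized, as in Subsection \ref{mmap}, and the factorization map $F_\alpha(v) = v \otimes O_{\alpha^*} \otimes O_\alpha$ inserts the distinguished vectors $O_\alpha$. I would verify that multiplying in $W_{\tilde{C}}$ and then taking $G \times \C^*$ invariants agrees with first restricting to invariants and then multiplying in $W_C$, by tracing a single product through the two commuting operations. Concretely, since $O_\alpha \otimes O_\beta = \sum I_\eta^{\alpha, \beta} \otimes O_\eta$ (the dualized form of Equation \ref{expand}), the insertion of the glueing vectors is compatible with the coproduct-like maps $q_\eta^{\alpha, \beta}$ that define multiplication; this is precisely the identity that makes the correlation diagram in the proof of Proposition \ref{g0rees} commute, and the same mechanism should force compatibility here.

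The main obstacle, as I see it, is the algebra-homomorphism step rather than the vector-space matching: one must be careful that the $G$-invariance is taken on the correct (right-hand) slots of the $q_1, q_2$ factors and that it commutes with the action of $\C[\tilde{C} \setminus q] \otimes \mathfrak{g}$ used to define the blocks, so that invariants under the two actions can be taken in either order. I would isolate this as the crux, reducing everything else to the already-established boosted factorization isomorphism and the explicit description of multiplication via $q_\eta^{\alpha, \beta}$. Once commutativity of the two invariant-taking operations is confirmed and the one-dimensionality of $[V(\alpha) \otimes V(\alpha^*)]^G$ is used to pin down the diagonal, the stated equality follows.
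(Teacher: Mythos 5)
Your proposal is correct and follows essentially the same route as the paper: the vector-space identification comes from the boosted factorization isomorphism, and the ring-isomorphism step is exactly the compatibility of the gluing maps $F_\alpha(v) = v\otimes O_{\alpha^*}\otimes O_\alpha$ with the multiplication maps $q_\eta^{\alpha,\beta}$, which the paper encodes in a commutative diagram whose commutativity rests on the identity $O_\alpha\otimes O_\beta = \sum I_\eta^{\alpha,\beta}\otimes O_\eta$, i.e.\ on the identification of multiplication in $\C[G]$ with that in $\C[G^2]^G$ from Subsection \ref{Gring}. Your handling of the $\C^*$-action (trivial for a non-separating node, level-matching for a separating one) also matches the paper's remark that the $\C^*$ is ``hidden'' by imposing equal levels on the components of $\tilde{C}$.
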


\begin{proof}  
In this proof we will use the maps $\mu^*$ for several different spaces at the same time, we will differentiate between these maps with a subscript $\mu^*_n.$ First we consider the following map. 

\small
$$
\begin{CD}
 \bigoplus_{\vec{\gamma} \leq \vec{\lambda} + \vec{\mu}, \eta \leq \alpha + \beta} [H(\vec{\gamma}, \vec{\gamma}^*, \eta, \eta^*, \eta^*, \eta, L+K)] @>\mu_{n+2}^*>>  [H(\vec{\lambda}, \vec{\lambda}^*, \alpha, \alpha^*, \alpha^*, \alpha, L)]\otimes [H(\vec{\mu}, \vec{\mu}^*, \beta, \beta^*, \beta^*, \beta, K)]\\
\end{CD}
$$
\normalsize

\noindent
Taking dual spaces and invariants by  $\C[\tilde{C} \setminus q]\otimes \mathfrak{g}$ and the action by $G$ given in Subsection \ref{wfactor} gives the multiplication operaton in the ring $W_{\tilde{C}, \vec{p}, q_1, q_2}(G)^{G \times \C^*}.$  Here the fact that we are also taking $\C^*$ invariants is hidden in our insistence that the levels $L, K, L+K$ be the same across different connected components of $\tilde{C}.$  

However, by Lemma \ref{multcommute}, in passing to the $G$ invariant spaces  $ [H(\vec{\gamma}, \vec{\gamma}^*, \eta, \eta^*, \eta^*, \eta, L+K)]^G$ $\cong [H(\vec{\gamma}, \vec{\gamma}^*, \eta, \eta^*, L+K)]$ the map induced by $\mu_{n+2}^*$ agrees with $\mu_{n+1}^*.$ Furthermore,  $ [H(\vec{\gamma}, \vec{\gamma}^*, \eta, \eta^*, \eta^*, \eta, L+K)]^G$ is the subspace of those vectors with $O_{\eta}$ in the second and fourth indices.  Therefore, the following diagram commutes. 

\small
$$
\begin{CD}
[H(\vec{\lambda}, \vec{\lambda}^*, L) \otimes H(\vec{\mu}, \vec{\mu}^*, K)] @>F_{\alpha} \otimes F_{\beta}>> [H(\vec{\lambda}, \vec{\lambda}^*, \alpha, \alpha^*, \alpha^*, \alpha, L) ]^G \otimes [H(\vec{\mu}, \vec{\mu}^*, \beta, \beta^*, \beta^*, \beta, K)]^G\\
@A \mu_n^* AA @A\mu_{n+2}^*AA\\
\bigoplus_{\vec{\gamma} \leq \vec{\lambda} + \vec{\mu}}[ H(\vec{\gamma}, \vec{\gamma}^*, L+K)] @>\sum F_{\eta}>> \bigoplus_{\vec{\gamma} \leq \vec{\lambda} + \vec{\mu}, \eta \leq \alpha + \beta} [H(\vec{\gamma}, \vec{\gamma}^*, \eta, \eta^*, \eta^*, \eta, L+K)]^G\\
\end{CD}
$$
\normalsize

  Dualizing this diagram, and taking invariants by $\C[C \setminus q]\otimes \mathfrak{g}$ on the left and $\C[\tilde{C} \setminus q]\otimes \mathfrak{g}$ on the right then produces the following commutative diagram.  

\small
$$
\begin{CD}
W_{C, \vec{p}}(\vec{\lambda}, \vec{\lambda}^*, L) \otimes W_{C, \vec{p}}(\vec{\mu}, \vec{\mu}^*, K) @<<< W_{\tilde{C}, \vec{p}, q_1, q_2}(\vec{\lambda}, \vec{\lambda}^*, \alpha, \alpha^*, L)^G \otimes  W_{\tilde{C}, \vec{p}, q_1, q_2}(\vec{\mu}, \vec{\mu}^*, \beta, \beta^*, K)^G\\
@V\mu VV @V \mu VV\\
\bigoplus_{\vec{\gamma} \leq \vec{\lambda} + \vec{\mu}}W_{C, \vec{p}}(\vec{\gamma}, \vec{\gamma}^*, L+K) @<<<  \bigoplus_{\vec{\gamma} \leq \vec{\lambda} + \vec{\mu}, \eta \leq \alpha + \beta} W_{\tilde{C}, \vec{p}, q_1, q_2}(\vec{\gamma}, \vec{\gamma}^*, \eta, \eta^*, L+K)^G\\
\end{CD}
$$
\normalsize

\noindent
The vertical arrows are multiplication in $W_{C, \vec{p}}(G)$ and $W_{\tilde{C}, \vec{p}, q_1, q_2}(G)^{G \times \C^*}$, respectively. This shows that the factorization map defined in Subsection \ref{wfactor} gives a map of algebras when we sum over all possible dominant weight and level assignments.  
\end{proof}

\subsection{Relationship with the algebra $V_{C, \vec{p}}(G)$}

Now we explain the relationship between $W_{C, \vec{p}}(G)$ and the algebra of conformal blocks $V_{C, \vec{p}}(G)$, studied in \cite{M4}. By working through definitions, we have $W_C(G) = V_C(G)$ in the $n = 0$ case, the following addresses $n > 0.$

\begin{proposition}
The algebra $V_{C, \vec{p}}(G)$ is the $U^n$ invariant subring of $W_{C, \vec{p}}(G).$
Equivalently, the space $\bar{K}_{C, \vec{p}}(G)$ is obtained from $\bar{B}_{C, \vec{p}}(G)$ by a right hand side $U^n \subset G^n$ quotient. 
\end{proposition}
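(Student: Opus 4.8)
The plan is to work component-by-component in the $(\vec\lambda, L)$-grading and reduce the statement to the classical fact that the $U$-fixed vectors in an irreducible $G$-representation form a one-dimensional line. Since $U^n \subset G^n$ acts only on the $V(\vec\lambda^*)$ factors of the extended blocks $W_{C, \vec p}(\vec\lambda, \vec\lambda^*, L) = V_{C, \vec p}(\vec\lambda, L) \otimes V(\vec\lambda^*)$, and this right-hand action commutes with the $\C[C \setminus q] \otimes \mathfrak{g}$-action used to carve out $V_{C, \vec p}(\vec\lambda, L)$ from $H(\vec\lambda^*, L)^*$, taking $U^n$-invariants passes to the second tensor factor:
\[
W_{C, \vec p}(\vec\lambda, \vec\lambda^*, L)^{U^n} = V_{C, \vec p}(\vec\lambda, L) \otimes V(\vec\lambda^*)^{U^n}.
\]
First I would invoke the identity $V(\mu)^U = \C v_\mu$, the line spanned by the highest weight vector, so that $V(\vec\lambda^*)^{U^n} = \bigotimes_i V(\lambda_i^*)^U$ is one-dimensional. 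Summing over $(\vec\lambda, L)$ yields a canonical graded-vector-space identification $W_{C, \vec p}(G)^{U^n} \cong \bigoplus_{\vec\lambda, L} V_{C, \vec p}(\vec\lambda, L) = V_{C, \vec p}(G)$.

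Second, I would promote this to an isomorphism of algebras. Because the multiplication $\mu$ on $W_{C, \vec p}(G)$ is assembled from the $G$-equivariant maps $q_\eta^{\alpha, \beta}$ of Subsection \ref{Gring}, it is $G^n$-equivariant on the right-hand factors; hence $U^n$ acts by algebra automorphisms and $W_{C, \vec p}(G)^{U^n}$ is automatically a subalgebra. It then remains to match its multiplication with that of $V_{C, \vec p}(G)$. The key point mirrors the classical fact $\C[G]^U = \bigoplus_\lambda V(\lambda)$ with Cartan multiplication: a product of highest weight vectors $v_{\vec\alpha^*} \cdot v_{\vec\beta^*}$ is the highest weight vector of the Cartan component $V((\vec\alpha + \vec\beta)^*)$, so among the summands $\vec\eta \leq \vec\alpha + \vec\beta$ appearing in the target of $\mu$, only the top one $\vec\eta = \vec\alpha + \vec\beta$ receives the product of $U^n$-invariants. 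This is precisely the Cartan-component multiplication defining the total coordinate ring $V_{C, \vec p}(G)$ of the quasi-parabolic moduli stack.

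Finally I would translate to geometry: writing $\hat{B}_{C, \vec p}(G) = \mathrm{Spec}\, W_{C, \vec p}(G)$ and $\hat{K}_{C, \vec p}(G) = \mathrm{Spec}\, V_{C, \vec p}(G)$, the equality $V_{C, \vec p}(G) = W_{C, \vec p}(G)^{U^n}$ says exactly that $\hat{K}_{C, \vec p}(G)$ is the affine quotient of $\hat{B}_{C, \vec p}(G)$ by the right $U^n \subset G^n$-action, the desired right-hand-side reduction. The step I expect to require the most care is the bookkeeping that makes the two invariance operations commute and that pins down the one-dimensionality input: one must track which factors (the affine Grassmannian $Q$, the left $G^n$, the right $G^n$) each group acts on, and verify that it is genuinely the maximal unipotent $U$ — and the highest, not lowest, weight vector under the identification $V(\lambda^*) = V(\lambda)^*$ — so that $V(\vec\lambda^*)^{U^n}$ really collapses to a line and the restriction of $\mu$ projects cleanly onto the single Cartan summand.
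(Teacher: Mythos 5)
Your argument is correct and is essentially the paper's own proof unpacked: the paper identifies $W_{C, \vec{p}}(G)$ with the $\C[C \setminus q]\otimes \mathfrak{g}$-invariants in the total coordinate ring of $Q \times G^n$, commutes the two invariance operations (exactly your observation that the right $U^n$-action lives only on the $V(\vec{\lambda}^*)$ factors and commutes with the current-algebra action), and then cites \cite{M4} for the identification of $\C[Q \times (G/U)^n]^{\C[C \setminus q]\otimes \mathfrak{g}}$ with $V_{C, \vec{p}}(G)$ --- the same input as your Cartan-multiplication step, since $\C[G]^{U} = \C[G/U] = \bigoplus_{\lambda} V(\lambda)$ with Cartan product. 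Your componentwise check that $V(\vec{\lambda}^*)^{U^n}$ is a line and that only the top summand $\vec{\eta} = \vec{\alpha}+\vec{\beta}$ survives in $\mu$ is a correct explicit substitute for that citation.
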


\begin{proof}
We can identify $W_{C, \vec{p}}(G)$ with the $\C[C \setminus q]\otimes \mathfrak{g}$ invariants in the projective coordinate ring of $Q \times G^n.$ Since the $\C[C \setminus q]\otimes \mathfrak{g}$ action commutes with the $U^n \subset G^n$ action on the right, we may take the $U^n$ invariants first.  This yields the coordinate ring of $Q \times (G/U)^n$, which is known to contain $V_{C, \vec{p}}(G)$ as its $\C[C \setminus q]\otimes \mathfrak{g}$ invariant subring, \cite{M4}.
\end{proof}

The pair $\mathcal{O}(\lambda), \mathcal{L}(\lambda)$ denotes the flag variety of parabolic weight $\lambda,$ with its canonical line bundle. We consider the product $\bar{B}_{C, \vec{p}}(G) \times \mathcal{O}(\vec{\lambda})$, with the linearization $\mathcal{\vec{\lambda}} = \mathcal{L}(\lambda_1) \otimes \ldots \otimes \mathcal{L}(\lambda_n)$. This space has an action of $G^n$, where the $i-th$ component acts on the $i-$th index of $\bar{B}_{C, \vec{p}}(G)$ and $\mathcal{O}(\vec{\lambda})$, this is linearized by the trivial line bundle with the $\mathcal{L}(\lambda_i)$. The projective coordinate ring of the GIT quotient of this scheme with respect to $G^n$ is the following invariant subring. 

\begin{equation}
[W_{C, \vec{p}}(G) \otimes \bigoplus_{m \geq 0} V(m\lambda)]^{G^n} = \bigoplus_{m \geq} V_{C, \vec{p}}(m\vec{\lambda}, mL) = R_{C, \vec{p}}(\vec{\lambda}, L)\\
\end{equation}

The $GIT$ quotient $[G/U]/_{\lambda} T$ is equal to $\mathcal{O}(\lambda)$, therefore the above ring can also be identified with the $T^n$-invariants in $V_{C, \vec{p}}(G)$ with respect to the character defined by $\vec{\lambda}.$  The algebra $V_{C, \vec{p}}(G)$ is the total coordinate ring of the moduli stack $\mathcal{M}_{C, \vec{p}}(G)$ of quasi-parabolic principal bundles, and $ R_{C, \vec{p}}(\vec{\lambda}, L)$ is the projective coordinate ring of the line bundle on this stack associated to $(\vec{\lambda}, L).$  The $Proj$ of the corresponding ring is then the coarse moduli $M_{C, \vec{p}}(\vec{\lambda}, L)$.

\subsection{The space $M_{g, n}(G)$ as a subspace of $B_{C_{\Gamma}, \vec{p}_{\Gamma}}(G)$}

Let  $W_{C, \vec{p}}(L)$ be the sum $\bigoplus_{\vec{\lambda} \in \Delta_L^n} W_{C, \vec{p}}(\vec{\lambda}, \vec{\lambda}^*, L)$,
this is the space of all extended conformal blocks of level $\leq L.$ 

\begin{proposition}\label{gcorrelate}
If $C$ is a union of projective lines, then $W_{C, \vec{p}}(G)$ is a Rees algebra of $\C[M_{g, n}(G)].$ 
\end{proposition}

\begin{proof}
We let $\Gamma$ be the dual graph to the arrangment of projective lines defined by $C$. We get the following from Theorem \ref{ringfact}.

\begin{equation}
W_{C, \vec{p}}(G) = [\bigotimes_{v \in V(\Gamma)} W_{\mathbb{P}^1, \vec{p}_i}(G)]^{[G\times \C^*]^{E(\Gamma)}}\\
\end{equation}
\noindent
 The algebra $W_{\mathbb{P}^1, \vec{p}_i}(G)$ for each of these components comes with the correlation morphism from Subsection \ref{ecor}. 

\begin{equation}
\kappa_v: W_{\mathbb{P}^1, \vec{p}_v}(G) \to \C[M_{0, n(v)}(G)]\otimes \C[t_v]\\
\end{equation}

We may define a $\C^*$ action associated to an edge $e \in E(\Gamma)$ on $\bigotimes_{v \in V(\Gamma)} \C[M_{0, n(v)}(G)]\otimes \C[t_v]$ 
by declaring that its character on a graded component to be the difference between the $t_v, t_w$ powers, where $v, w$ are the endpoints
of $e.$ By definition, the map $\bigotimes_{v \in V(\Gamma)} \kappa_v$ intertwines this action with the $(\C^*)^{E(\Gamma)}$ action 
on $\bigotimes_{v \in V(\Gamma)} W_{\mathbb{P}^1, \vec{p}_v}(G)$.  The only monomials in the $t_v$ invariant under this action are powers of $t = \prod_{v \in V(\Gamma)} t_v$.  Likewise, the gluing actions of $G$ on $\bigotimes_{v \in V(\Gamma)}\C[M_{0, n(v)}]$ associated to each edge $e \in E(\Gamma)$ intertwine with the corresponding actions on $\bigotimes_{v \in V(\Gamma)} W_{\mathbb{P}^1, \vec{p}_v}(G)$.  The result is a $1-1$ correlation morphism on the $(G\times \C^*)^{E(\Gamma)}$ invariants. 

\begin{equation}
\kappa_{\Gamma}: W_{C, \vec{p}}(G) \to \C[M_{g, n}(G)]\otimes \C[t],\\
\end{equation}  

\noindent
By Proposition \ref{g0rees}, the image of $W_{C, \vec{p}}(L)$ under $\kappa_{\Gamma}$ is the set of $f \in \C[M_{g, n}(G)]$ such that each $v_{\vec{p}_w}(f) \leq L$ for $\vec{p}_w$ the arrangement of points on the copy of $\mathbb{P}^1$ dual to $w \in V(\Gamma).$ 
\end{proof}

By taking $U^n$ invariants of $W_{C, \vec{p}}(G)$ and $\C[M_{g, n}(G)]$, we obtain an identical description of $V_{C, \vec{p}}(G)$ as a Rees algebra of $\C[P_{g, n}(G)]$, this proves Theorem \ref{maingen1}.  In the case that $C$ is itself a genus $0$ curve, all of the spaces $K_{C, \vec{p}}(G)$ have the universal configuration space $P_{0, n}(G)$ as a dense open subscheme.  The ring theoretic version of this latter fact was observed in \cite{M4}.

\section{The scheme $M_{g, n}(SL_2(\C))$}\label{sl2theory}

The remainder of the paper is devoted to $G = SL_2(\C)$, where we can give a more complete account of the spaces $B_{C, \vec{p}}(SL_2(\C))$  and
$K_{C, \vec{p}}(SL_2(\C)).$  Elements of the representation theory of $SL_2(\C)$ provide combinatorial tools for describing these spaces concretely, in particular the Clebsch-Gordon rules for tensor product decomposition, and the Pl\"ucker equations play an important role. 

 We describe two spanning sets of $\C[M_{g, n}(SL_2(\C))]$ attached to a trivalent graph $\Gamma,$ the set of spin diagram elements $\mathcal{R}(\Gamma)$, and the set $\mathcal{S}(\Gamma)$ of $\Gamma-$tensors. The set $\mathcal{S}(\Gamma)$ is used to show that the algebras $W_{C_{\Gamma}, \vec{p}_{\Gamma}}(SL_2(\C))$, $V_{C_{\Gamma}, \vec{p}_{\Gamma}}(SL_2(\C))$ are presented by skein relations, and $\mathcal{R}(\Gamma)$ is used to build a flat degeneration of $W_{C_{\Gamma}, \vec{p}_{\Gamma}}(SL_2(\C))$ to an affine semigroup algebra $\C[H_{\Gamma}^*]$ associated to a convex polyhedral cone $\mathcal{H}_{\Gamma}^*$.  In Section \ref{stratificationsection} we show that the structure of the boundary divisor $D_{\Gamma}$ of the compactification of $M_{g, n}(SL_2(\C))$ defined by $B_{C_{\Gamma}, \vec{p}_{\Gamma}}(SL_2(\C))$ is determined by the face structure of $\mathcal{H}_{\Gamma}^*.$

\subsection{The coordinate ring $\C[SL_2(\C)].$}\label{sl2coordinates}

As an algebraic variety, the group $SL_2(\C)$ is the locus of the equation $AD- BC = 1$ in the space of $2 \times 2$ matrices $M_{2\times 2}(\C).$ 

\begin{equation} SL_2(\C) = \{A, B, C, D | \ det\left[ \begin{array}{cc}
A & B\\
C & D\\
\end{array} \right] = 1 \} \subset M_{2\times 2}(\C)\\
\end{equation}

This description is connected with the $SL_2(\C) \times SL_2(\C)$ isotypical decomposition given by the Peter-Weyl theorem ( Subsection \ref{Gring})
by identifying the generators $A, B, C, D$ above with matrix elements in $End(V(1)) = M_{2\times 2}(\C)^*.$   Let $X_{ij} \in M_{2\times 2}(\C)$ be the matrix which has $ij$ entry equal to $1$ and all other entries $0$, the generators of $\C[SL_2(\C)]$ are computed on an element $M \in SL_2(\C)$ as follows. 

\begin{equation}
A(M) = Tr(M^{-1}X_{11}) \ \ \ \ B(M) = Tr(M^{-1}X_{01}) \ \ \ \ C(M) = Tr(M^{-1}X_{10}) \ \ \ \ Tr(M^{-1}X_{00})\\
\end{equation}

Let $U_-, U_+ \subset SL_2(\C)$ be the groups of upper, respectively lower triangular matrices in $SL_2(\C).$ It will be necessary to use the coordinate ring of the $GIT$ quotients $SL_2(\C)/U_-, U_+ \backslash SL_2(\C)$, both of which are identified with $\C^2.$  The algebras $\C[SL_2(\C)]^{U_-}, \C[SL_2(\C)]^{U_+}$ are the subspaces of right highest, respectively left lowest weight vectors in the representation $\bigoplus_{i \in \Z_{\geq 0}} V(i, i)$ 

\begin{equation}
\C[SL_2(\C)]^{U_-} = \C[SL_2(\C)]^{U_+} = \bigoplus_{i \in \Z_{\geq 0}} V(i)\\
\end{equation}
\noindent

With actions taken on the right hand side, the algebra $\C[SL_2(\C)]^{U_-} \subset \C[SL_2(\C)]$ is a polynomial ring, generated by $A, C$, whereas $\C[SL_2(\C)]^{U_+} \subset \C[SL_2(\C)]$ is generated by $B, D$. Under the automorphism $(-)^{-1}:SL_2(\C) \to SL_2(\C)$,
$\C[U_-\backslash SL_2(\C)]$ is likewise identified with a polynomial ring in two variables.

The isotypical decomposition of $\C[SL_2(\C)]$ suggests a natural $SL_2(\C) \times SL_2(\C)$-stable filtration defined by the spaces $F_{\leq k} = \bigoplus_{i \leq k} V(i, i) \subset \C[SL_2(\C)]$.  Following Subsection \ref{Gring} (see also \cite{Gr}, Chapter 7), this is indeed an algebra filtration, in particular one can verify that the image of a product $End(V(i))End(V(j))$ lies in the space $\bigoplus_{k \leq i + j} End(V(k)).$   The associated graded algebra of this filtration is the algebra $[\C[SL_2(\C)]^{U_-} \otimes \C[SL_2(\C)]^{U_+}]^{\C^*}$.  Here the the action by $\C^*$ picks out the invariant subalgebra one would expect from the isotypical decomposition, $\bigoplus_{i \in \Z_{\geq 0}} V(i) \otimes V(i) \subset \C[SL_2(\C)]^{U_-} \otimes \C[SL_2(\C)]^{U_+}$.  Hidden in this statement is the important fact that for any non-zero $f \in End(V(i)), g \in End(V(j))$, the component $(fg)_{i + j} \in End(V(i+j))$ is always non-zero.  This is a consequence of the fact that $[\C[SL_2(\C)]^{U_-} \otimes \C[SL_2(\C)]^{U_+}]^{\C^*}$ is a domain.   We let $v: \C[SL_2(\C)] \to \Z \cup \{-\infty\}$ be the valuation associated to this filtration.

From now on we let $SL_2(\C)^c$ be the $GIT$ quotient  $[SL_2(\C)/U_- \times U_+ \backslash SL_2(\C)]/\C^*.$  Multiplication in $\C[SL_2(\C)^c]$ is $SL_2(\C) \times SL_2(\C)$-equivariant, and the isotypical components are irreducible representations of this group, it follows that each product map $[V(i) \otimes V(i)] \otimes [V(j) \otimes V(j)] \to [V(i + j) \otimes V(i + j)]$ is surjective, and this algebra is generated by the subspace $V(1) \otimes V(1)$.  The associated graded algebra can be presented by $\C[A, B, C, D]$ modulo the initial ideal of $<AD - BC - 1>$ with respect to the filtration $F.$ Let $X, Y$ be generating monomials of the coordinate ring $\C[\C^2]$.  With this notation, the algebra $\C[SL_2(\C)^c]$ is isomorphic to $\C[\C^2 \times \C^2/\C^*]$ by the map $A \to X \otimes X, B \to X\otimes Y, C \to Y \otimes X, D \to Y \otimes Y.$

\begin{equation}
\C[SL_2(\C)^c] = \C[A, B, C, D]/<AD-BC>\\
\end{equation}

Let $\mathcal{P}$ be the pointed, polyhedral cone in $\R^3$ defined by the origin $(0, 0, 0)$ and
the rays through the points $\{(0, 0, 1), (1, 0, 1), (0, 1, 1), (1, 1, 1)\}$, and let $P$ be the affine
semigroup  $\mathcal{P} \cap \Z^3.$ We have the following isomorphism of $\C$ algebras. 

\begin{equation}
\C[SL_2(\C)^c] \cong \C[P]\\
\end{equation}

\subsection{The spin diagram basis $\mathcal{R}(\Gamma)$}

Observe that the coordinate ring of the space $SL_2(\C)^{E(\Gamma)}$ has the following isotypical decomposition under the action of $SL_2(\C)^{E(\Gamma)} \times SL_2(\C)^{E(\Gamma)}$. 

\begin{equation}
\C[SL_2(\C)^{E(\Gamma)}] = \bigoplus_{a: E(\Gamma) \to \Z_{\geq 0}} \bigotimes_{e \in E(\Gamma)} V(a(e))\otimes V(a(e))\\
\end{equation}

\noindent
By passing to $SL_2(\C)^{V(\Gamma)}$ invariants we obtain a direct sum decomposition of the coordinate ring of $M_{\Gamma}(SL_2(\C))$ using its alternative description from Subsection \ref{alternative}. 

\begin{equation}
\C[M_{\Gamma}(SL_2(\C))] =  \bigoplus_{a: E(\Gamma) \to \Z_{\geq 0}} [\bigotimes_{e \in E(\Gamma)} V(a(e))\otimes V(a(e))]^{SL_2(\C)^{V(\Gamma)}}\\
\end{equation}

\begin{definition}
Let $v_e: \C[M_{\Gamma}(SL_2(\C))] \to \Z \cup \{-\infty\}$ be the valuation on the coordinate ring of $M_{\Gamma}(SL_2(\C))$
defined by the valuation $v$ on the $e-$copy of $SL_2(\C)$ in $SL_2(\C)^{E(\Gamma)}$ and the inclusion $\C[M_{\Gamma}(SL_2(\C))] \subset \C[SL_2(\C)^{E(\Gamma)}]$.
\end{definition}

\begin{definition}
Let $\mathcal{C}_{\Gamma} = \R_{\geq 0}^{E(\Gamma)}$.  For a point $x \in \mathcal{C}_{\Gamma}$, 
let $v_x$ be the filtration $\oplus_{e \in E(\Gamma)} x(e)v_e$ on $\C[M_{\Gamma}(SL_2(\C))]$.
\end{definition}

\begin{lemma}\label{mlem}
For every point $x \in \mathcal{C}_{\Gamma}$, $v_x$ is a valuation on the coordinate ring $\C[M_{\Gamma}(SL_2(\C))]$. 
Furthermore, for $v_x$ with $x(e) \neq 0$ for all $e \in E(\Gamma)$, the associated graded algebra of $\C[M_{\Gamma}(SL_2(\C))]$ is
the coordinate ring of the scheme $M_{\Gamma}(SL_2(\C)^c).$ 
\end{lemma}

\begin{proof}
This is a direct consequence of Lemmas \ref{val1} and \ref{val2} and our computations in Subsection \ref{sl2coordinates}. 
\end{proof}

\begin{corollary}\label{pcor}
The associated graded algebra of $\C[P_{\Gamma}(SL_2(\C))]$ is the coordinate ring of the scheme $\C[M_{\Gamma}(SL_2(\C)^c)/U_-^n]$.
\end{corollary}

\begin{proof}
Since the action of the unipotent group $U_-^n$ extends to an action of the reductive group $SL_2(\C)^n$, Lemma \ref{val1}
can be applied.
\end{proof}

In the construction of $M_{\Gamma}(SL_2(\C))$ given in Subsection \ref{alternative}, there is a copy of $SL_2(\C)$ for each vertex $v \in V(\Gamma)$ acting on the tensor product $V(a(e)) \otimes V(a(f)) \otimes V(a(g))$, where $v \in \delta(e), \delta(f), \delta(g).$  The Clebsch-Gordon rule implies that the invariant subspace of such a tensor product is at most one dimensional, and this space is nontrivial if and only if the numbers $a(e), a(f), a(g)$ satisfy two conditions.

\begin{enumerate}
\item $a(e) + a(f) + a(g) \in 2\Z$\\
\item $a(e), a(f), a(g)$ are the sides of a triangle: $|a(e) - a(g)| \leq a(f) \leq a(e) + a(g)$.\\
\end{enumerate}

\noindent
We let $\mathcal{U}_{\Gamma}$ be the real cone of $a: E(\Gamma) \to \R_{\geq 0}$ which satisfy condition $2$ above, $L_{\Gamma} \subset \R^{E(\Gamma)}$ be the lattice of integers which satisfy condition $1,$ and $U_{\Gamma} = \mathcal{U}_{\Gamma} \cap L_{\Gamma}$ be the associated affine semigroup.  As $[V(a(e)) \otimes V(a(f)) \otimes V(a(g))]^{SL_2(\C)}$ is multiplicity-free, each space $ [\bigotimes_{e \in E(\Gamma)} V(a(e))\otimes V(a(e))]^{SL_2(\C)^{V(\Gamma)}}$ is isomorphic to $\otimes_{\ell \in L(\Gamma)} V(a(\ell))$ or the $0$ vector space. In particular, this space is non-zero  precisely when $a \in U_{\Gamma}$.

Following Subsection \ref{sl2coordinates} the representation $V(a(\ell))$ is the subspace of the degree $a(\ell)$ homogeneous polynomials
in the polynomial ring with two variables, in particular it has a basis of monomials $x^iy^j$, $ i + j = a(\ell)$.   These monomials are weight
vectors for the maximal diagonal torus $\C^* \subset SL_2(\C)$, and $x^{a(\ell)}$ is the highest weight vector.  By selecting this highest
weight vector at each leaf, we pick out the $1-$dimensional subspace $[[\bigotimes_{e \in E(\Gamma)} V(a(e))\otimes V(a(e))]^{SL_2(\C)^{V(\Gamma)}}]^{U_-^n} \subset \C[M_{\Gamma}(SL_2(\C))]$ of unipotent invariant vectors. We fix once and for 
all an element $\Phi_a$ in this space, this gives the following direct sum decomposition.

\begin{equation}
\C[P_{\Gamma}(SL_2(\C))] = \C[M_{\Gamma}(SL_2(\C))]^{U_-^n} = \bigoplus_{a \in P_{\Gamma}} \C\Phi_a\\
\end{equation}

\noindent
We let $\mathcal{R}(\Gamma) \subset \C[P_{\Gamma}(SL_2(\C))]$ be the collection of these elements, called spin diagrams from now on.
The algebraic properties of these diagrams are explored extensively in \cite{LP}.  

\begin{proposition}\label{ptor}
The associated graded algebra $\C[M_{\Gamma}(SL_2(\C)^c)/U_-^n]$ from Corollary 
\ref{pcor} is the affine semigroup algebra associated to $U_{\Gamma}$.  
\end{proposition}

\begin{proof}
The follows from Lemmas \ref{val1}, \ref{val2}, and the fact that the graded components of the associated
graded algebra are multiplicity free and labelled by $a \in U_{\Gamma}.$ 
\end{proof}

Since $SL_2(\C)^c$ is itself the $GIT$ quotient $[\SL_2(\C)/U_- \times U_+ \backslash SL_2(\C)]/\C^*$, we can use Proposition
\ref{ptor} to obtain a similar result for $M_{\Gamma}(SL_2(\C)^c),$ the degeneration of $M_{\Gamma}(SL_2(\C)).$  With this in mind
we define $\mathcal{H}_{\Gamma} \subset \R^{E(\Gamma) + 2L(\Gamma)}$ to be the cone defined by the triangle inequalities, 
with the addition of two entries $x_i, y_i$ at each leaf $\ell_i$, such that $a(x_i) + a(y_i) = a(\ell_i)$ for each $a \in \mathcal{H}_{\Gamma}.$ 
We define the lattice $L_{\Gamma} \subset \R^{E(\Gamma) + 2L(\Gamma)}$ in the same way as above, and $H_{\Gamma} = \mathcal{H}_{\Gamma} \cap L_{\Gamma}.$ 

\begin{proposition}\label{mtor}
The associate graded algebra $\C[M_{\Gamma}(SL_2(\C)^c)]$ from Lemma \ref{mlem} above is the affine semigroup 
algebra associated to $H_{\Gamma}.$ 
\end{proposition}

\begin{proof}
It follows from the definition of $M_{\Gamma}(SL_2(\C)^c)$ that this scheme is isomorphic to the $GIT$ quotient 
$[M_{\Gamma}(SL_2(\C)^c)/U_-^n \times [U_+\backslash SL_2(\C)]^n]/(\C^*)^n$.  The coordinate ring
of $M_{\Gamma}(SL_2(\C)^c)/U_-^n \times [U_+\backslash SL_2(\C)]^n$ is the $2n$ polynomial ring over
the affine semigroup algebra $\C[U_{\Gamma}]$, with two variables $X_i, Y_i$ for each leaf $\ell_i$.  
The action of $(\C^*)^n$ decomposes this algebra into isotypical spaces spanned by $\Phi_a \otimes \prod X_i^{s_i}Y_i^{t_i}$.
One of these vectors is invariant if and only if $s_i + t_i - a(\ell_i) = 0.$ 
\end{proof}

In the special case $n = 0$, $\C[M_{\Gamma}(SL_2(\C))] = \C[P_{\Gamma}(SL_2(\C))]$ and $U_{\Gamma} = H_{\Gamma}.$ From
now on we let $\Phi_a$ denote the basis member of $\C[M_{\Gamma}(SL_2(\C)^c)]$ associated to $a \in H_{\Gamma}.$  For $a \in U_{\Gamma}$ or $H_{\Gamma}$ and $x \in \mathcal{C}_{\Gamma}$, the following holds by definition for both $\C[M_{\Gamma}(SL_2(\C))]$ and $\C[P_{\Gamma}(SL_2(\C))]$. 

\begin{equation}
v_x(\Phi_a) = \sum_{e \in E(\Gamma)} x(e)a(e)\\
\end{equation}

We place a partial order on $a \in U_{\Gamma}$ or $H_{\Gamma}$, where $b \prec a$ when $a(e) - b(e) \geq 0$
for any edge $e \in E(\Gamma).$   We call this the $\Gamma-$spin diagram filtration, this should be distinguished from the $\Gamma-$level filtration defined in the introduction. 

\begin{proposition}\label{lowtri}
For $\Phi_a, \Phi_b$ both members of $\C[M_{\Gamma}(SL_2(\C))]$ or $\C[P_{\Gamma}(SL_2(\C))]$, $\Phi_a\Phi_b$ is a sum
$\sum_{t \prec a + b} C_t\Phi_t$.  Furthermore, the coefficient $C_{a + b}$ is always non-zero in this sum. 
\end{proposition}

\begin{proof}
The first property holds because it also holds in the coordinate ring $\C[SL_2(\C)^{E(\Gamma)}]$.  For the
second property, note that $v_x$ would not be a valuation for generic $x$ if this were not the case.
\end{proof}

\subsection{The space $M_{0, 3}(SL_2(\C))$}

The cone $\mathcal{H}_{0, 3}$ is the set of labellings of the diagram in Figure \ref{P3el} by non-negative real numbers which satisfy $x_1 + y_1 = a, x_2 + y_2 = b, x_3 + y_3 = c$, such that $a, b, c$ form the sides of a triangle, $|a-c| \leq b \leq a+c.$ The lattice $L_3$ is the set of integer labellings of this diagram with  $a + b +c \in 2\Z$.  

\begin{figure}[htbp]
\centering
\includegraphics[scale = 0.5]{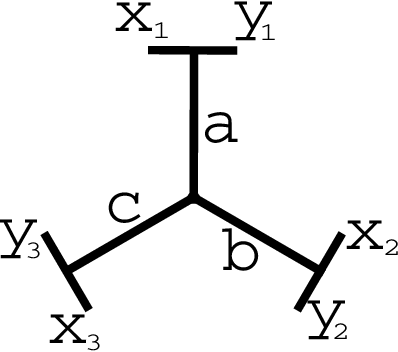}
\caption{Defining diagram of $\mathcal{H}_{0, 3}$.}
\label{P3el}
\end{figure}

The affine semigroup $H_{0,3}$ is generated by the $3 \times 4 = 12$ weightings with one of $a, b, c$ equal to $0$
and the other two entries equal to $1.$ We label these generators $X_{(i, a), (j, b)}$, indicating a path from $i$ to $j$
with orientation markings on the $a, b$ ends of these paths, respectively.  The algebra $\C[M_{0, 3}(SL_2(\C)^c)]$ is the
$(\C^*)^3$ invariant subalgebra of $\C[\C^2 \times \C^2 \times \C^2]^{SL_2(\C)}\otimes (\C^2)^3$ following Proposition \ref{mtor}. 
The element $X_{(i, a)(j, b)}$ is the Pl\"ucker generator $p_{ij} \in \C[\C^2 \times \C^2 \times \C^2]^{SL_2(\C)}$ tensored
with the appropriate monomial $X_iX_j,$ $X_iY_j,$ $Y_iX_j,$ $Y_iY_j$ as indicated by the data $a, b$.  From now on we refer to
the latter as orientation data, an $X$ is considered ''[UP]" and a $Y$ is considered ''[DOWN]".

In order to describe $\C[M_{0, 3}(SL_2(\C))],$ we view $SL_2(\C)^3$ as the space of $2\times 6$ matrices satisfying three determinant equations. 

$$
\left[ \begin{array}{cc|cc|cc}
a_1 & b_1 & a_2 & b_2 & a_3 & b_3\\
c_1 & d_1 & c_2 & d_2 & c_3 & d_3\\
\end{array} \right], \ \  det\left[ \begin{array}{cc}
a_i & b_i\\
c_i & d_i\\
\end{array} \right] = 1.
$$

\noindent
Each of the $2\times 2$ determinants of the matrix above is invariant with respect to the left $SL_2(\C)$ action, as are
the Pl\"ucker relations which hold between them. It follows that we may view $M_{0, 3}(SL_2(\C))$ as the space of $2 \times 6$ matrices $[C_{1, 1}C_{1, 2}C_{2,1}, C_{2,2}, C_{3,1}C_{3,2}]$ which satisfy $det(C_{i,1}C_{i,2}) = 1,$ modulo the left action of $SL_2(\C).$ We let $p_{(i,a),(j,b)}$ be the function $det(C_{i,a}C_{j,b})$.  Note that the orientation data $[UP]$ refers to the first column of a matrix, whereas $[DOWN]$ refers to the second column. For example, $p_{(1, UP)(2, UP)}$ means ''take the determinant of the first columns of the first and second matrices."

\begin{proposition}
The initial terms $in(p_{(i,a),(j, b)})$ give a generating set of $H_{0,3}.$
\end{proposition}

\begin{proof}
Recall that the algebra $\C[SL_2(\C)^c]$ is isomorphic to $\C[\C^2 \times \C^2]^{\C^*}$, and the isomorphism does the following to generators:
$A \to X \otimes X$, $B \to X \otimes Y$, $C \to Y \otimes X$, $D \to Y \otimes Y.$  We consider the image of the tensor $p_{(1, UP), (2, UP)} \in \C[M_{0, 3}(SL_2(\C)^c],$  given by the determinant $A_1C_2 - A_2C_1 =$ $(X_1 \otimes X_1)(Y_2 \otimes X_2) -$ $(X_2 \otimes X_2)(Y_1 \otimes X_1)=$ $(X_1Y_2 - X_2Y_1) \otimes X_1X_2.$  We obtain the Pl\"ucker invariant $X_1Y_2 - X_2Y_1 \in \C[\C^2 \times \C^2 \times \C^2]^{SL_2(\C)}$ $=\C[M_{0, 3}(SL_2(\C)/U_-^3]$ tensored with the leaf data $X_1X_2$, this is precisely the element $X_{(1, UP), (2, UP)} \in H_{0, 3}$.  This computation works for all generators $X_{(i, a)(j, b)}.$  
\end{proof}

\noindent
Relations among the generators of $\C[M_{0, 3}(SL_2(\C))]$  can be represented graphically as below.  

\begin{equation}
\Pccn \Pnkk - \Pckn \Pnck + \Pcnk = 0\\
\end{equation}

\begin{equation}
1 - \Pccn \Pkkn + \Pckn\Pkcn = 0\\
\end{equation}

\bigskip

The coordinate ring of $M_{0, n}(SL_2(\C))$ is likewise generated by Pl\"ucker elements $p_{(i, a), (j, b)}$, subject to Pl"ucker relations.  To describe the latter, we fix a lexicogrphic ordering on the indices $(i, a)$, where $DOWN$ $<$ $UP$.  Notice that this choice ascribes a cyclic ordering
to the edges of the claw tree with $n$ leaves.  For $(i_1, a_1) < (i_2, a_2) < (i_3, a_3) < (i_4, a_4)$, we have the following
Pl\"ucker relation, these give a presentation of $\C[M_{0, n}(SL_2(\C))]$ when coupled with $p_{(i, UP), (i, DOWN)} = 1$. . 

\begin{equation}
p_{(i_1, a_1), (i_2, a_2)} p_{(i_3, a_3), (i_4, a_4)} - p_{(i_1, a_1), (i_3, a_3)} p_{(i_2, a_2), (i_4, a_4)} + p_{(i_1, a_1), (i_4, a_4)} p_{(i_2, a_2), (i_3, a_3)} = 0\\
\end{equation}

\subsection{$\Gamma$-tensors}

We define a distinguished set of elements $\mathcal{S}(\Gamma) \subset \C[M_{\Gamma}(SL_2(\C))]$ built from the Pl\"ucker generators in $\C[M_{0, 3}(SL_2(\C))]$, with $\Gamma$ serving as a combinatorial guide.   First we define an abstract $\Gamma$-form, which contains the necessary combinatorial information to define a member of $S(\Gamma)$.

\begin{definition}
An abstract $\Gamma$-form $\mathcal{V}(P, \phi, A)$ is the following information. 

\begin{enumerate}
\item For $v \in V(\Gamma),$ a collection $P_v$ of directed paths with endpoints in the leaves of $\Gamma_v \subset \Gamma$.\\
\item For $e \in E(\Gamma)$ which connects vertices $v, w$, an isomorphism of sets $\phi_e$ which identifies paths leaving $v$ with those going into $w$, and vice-versa.\\
\item A choice of orientation data $a_{\ell} \in \{UP,DOWN\}$ for each end point of a path which terminates at a leaf $\ell$ of $\Gamma$.\\
\end{enumerate}
\end{definition}

\begin{figure}[htbp]
\centering
\includegraphics[scale = 0.3]{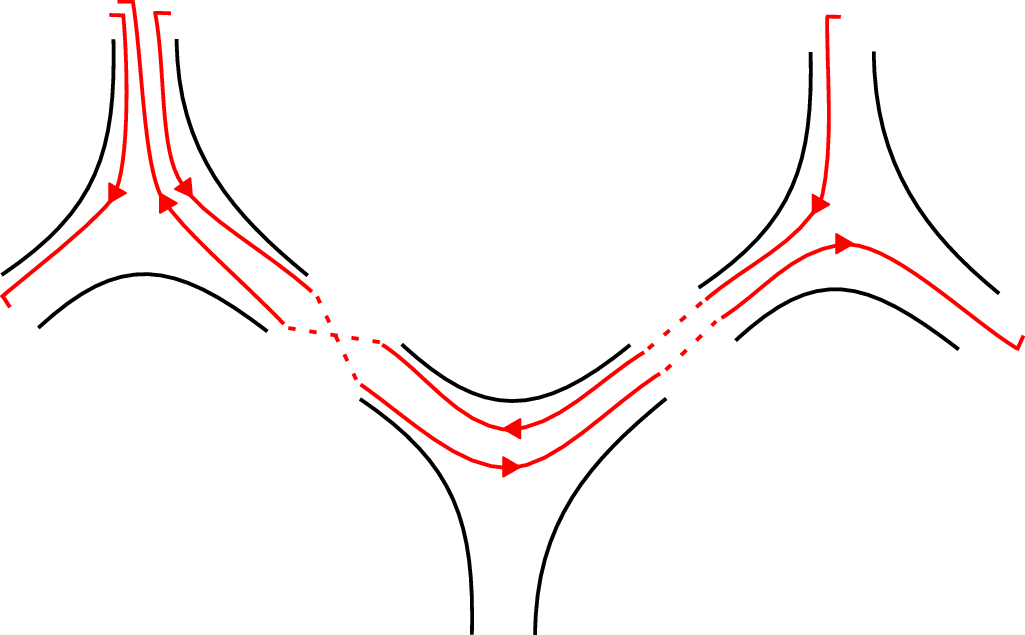}
\caption{A $\Gamma-$tensor.}
\label{agammatensor}
\end{figure}

\noindent
   For an abstract $\Gamma$-form $\mathcal{V}(P,\phi, A)$, we say that an assignment $B$ of orientation data to the non-leaf endpoints of paths in $P$ is $coherant$ if paths identified by a $\phi_e$ are given opposite orientations.  We place a copy of $M_{0, 3}(SL_2(\C))$ at each vertex $v \in V(\Gamma)$, with each leaf of $\Gamma_v$ assigned to one of the $3$ $SL_2(\C)$ actions.  The data $\mathcal{V}(P, \phi, A),$ $B$ then allows us to assign a Pl\"ucker generator in $\C[M_{0, 3}(SL_2(\C))]$ to each path, defining a tensor $V(P, \phi, B, A) \in \C[M_{0, 3}(SL_2(\C))^{V(\Gamma)}]$.    We let the signature $(-1)^{\sigma(P, \phi, B, A)} \in \{1, -1\}$ be $-1$ to the number times an outgoing path at a connecting edge $e$ is given the [DOWN] orientation.  We now define the concrete $\Gamma$-tensor $V(P, \phi, A)$ as the following sum in $\C[M_{0, 3}(SL_2(\C))^{V(\Gamma)}]$.

\begin{equation}
V(P, \phi, A) = \sum_{B} (-1)^{\sigma(P, \phi, B, A)} V(P, \phi, B, A)\\
\end{equation}

 We have already established $M_{0, 4}(SL_2(\C)) =$ $SL_2(\C) \backslash [M_{0, 3}(SL_2(\C)) \times M_{0, 3}(SL_2(\C))]$, we will now use this to construct $\mathcal{S}(\tree) \subset \C[M_{\tree}(SL_2(\C))] = \C[M_{0, 4}(SL_2(\C))]$ for $\tree$ a trivalent tree with $4$ leaves. 

\begin{lemma}\label{4plucker}
The Pl\"ucker generator $p_{(i,a),(j,b)}$ of $\C[M_{0, 4}(SL_2(\C))]$ is a member of $\mathcal{S}(\tree)$ for any $4-$tree $\tree$. 
\end{lemma}

\begin{proof}
We depict two trinodes meeting at a common edge in Figure \ref{Gammacompute1}, let $A_1, A_2$ be 
the $2\times 2$ matrices on the inside edges. 

\begin{figure}[htbp]

$$
\begin{xy}
(-25, 0)*{\bullet} = "A1";
(-35,20)*{\bullet} = "A2";
(-35,-20)*{\bullet} = "A3"; 
(-5,0)*{\bullet} = "A4"; 
(5, 0)*{\bullet} = "B1";
(25, 0)*{\bullet} = "B2"; 
(35, 20)*{\bullet} = "B3";
(35, -20)*{\bullet} = "B4";
(-37, 10)*{\left[ \begin{array}{c} x\\ y \\ \end{array} \right]  };
(-15, 5)*{\left[ \begin{array}{cc} a_1 & b_1\\ c_1 & d_1\\ \end{array} \right]  };
(15, 5)*{\left[ \begin{array}{cc} a_2 & b_2\\ c_2 & d_2\\ \end{array} \right]  };
(37, -10)*{\left[ \begin{array}{c} u\\ v \\ \end{array} \right]  };
"A4"; "A1";**\dir{-}? >* \dir{>};
"A1"; "A2";**\dir{-}? >* \dir{>};
"A1"; "A3";**\dir{-}? >* \dir{-};
"B2"; "B1";**\dir{-}? >* \dir{>};
"B4"; "B2";**\dir{-}? >* \dir{>};
"B3"; "B2";**\dir{-}? >* \dir{-};
\end{xy}
$$\\
\caption{A Pl\"ucker generator as a $\tree-$tensor.}
\label{Gammacompute1}
\end{figure}

The $\Gamma$ tensor formed by taking 
some choice of columns $[x, y], [u, v]$ is computed as follows. 

\begin{equation}
 det\left[ \begin{array}{cc} x & b_1\\ y & d_1\\ \end{array} \right] det\left[ \begin{array}{cc} a_2 & u\\ c_2 & v\\ \end{array} \right] -det\left[ \begin{array}{cc} x & a_1\\ y & c_1\\ \end{array} \right] det\left[ \begin{array}{cc} b_2 & u\\ d_2 & v\\ \end{array} \right]\\
\end{equation}

\begin{equation}\label{pregamma}
xu(c_1d_2 - d_1c_2) -xv(c_1b_2-d_1a_2) +yv(a_1b_2 - b_1a_2) - yu(a_1d_2 - b_1c_2)\\
\end{equation}

Each of the forms in parentheses are minors in the rows of $A_1, A_2,$ and are therefore invariant with respect
to the right hand side action of $SL_2(\C)$.  It follows that this tensor is in the subalgebra $\C[M_{0,3}(SL_2(\C))^2/SL_2(\C))] \subset \C[M_{0, 3}(SL_2(\C))^2]$.  Recall (Section \ref{mgnsection}) that this subalgebra is isomorphic to $\C[M_{0, 4}(SL_2(\C))]$, and the isomorphism is computed by evaluating $A_1, A_2$ at the identity, this produces the Pl\"ucker element $xv-yu$. 
\end{proof}

The previous lemma gives an inductive proof of the following proposition. 

\begin{proposition}\label{plucker}
The Pl\"ucker generator $p_{(i, a), (j,b)}$ of $\C[M_{0, n}(SL_2(\C))]$ is a $\tree$-tensor for any tree $\tree$ with $n$ leaves. 
\end{proposition} 

As a consequence we obtain the following. 

\begin{proposition}\label{goinv}
The set of Pl\"ucker monomials in $\C[M_{0, n}(SL_2(\C))]$ agrees with the set of $\tree$-tensors $\mathcal{S}(\Gamma) \subset \C[M_{0, 3}(SL_2(\C))^{V(\tree)}]$ for any tree $\tree$ with $n$ leaves.
\end{proposition}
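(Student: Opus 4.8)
The plan is to reduce the statement to the single–path case already proved in Proposition \ref{plucker}, by showing that the $\mathcal{T}$-tensor construction is multiplicative over the connected strands of its underlying path collection. First I would observe that for a tree $\mathcal{T}$ the gluing data $\phi$ assembles the local paths at the vertices into global strands, and that because $\mathcal{T}$ is acyclic each such strand is a simple path joining two leaves of $\mathcal{T}$: a strand entering a trivalent vertex along one half-edge must leave along another, and it can never return to a vertex it has already visited, so it terminates only at leaves. Thus an abstract $\mathcal{T}$-form $\bar{V}(P,\phi,A)$ is precisely the datum of a finite collection of leaf-to-leaf strands $s_1,\dots,s_r$ together with orientation labels $A$ at their leaf-endpoints, which is exactly the combinatorial content of a product $\prod_k P_{i_kj_k}(A_k)$ of Plücker generators.

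The key step is the factorization
\[
V(P,\phi,A)=\prod_{k=1}^r V(s_k),
\]
which I would prove directly from the definition of the signed sum. For a fixed coherent orientation $B$, the element $V(P,\phi,B,A)\in \C[M_{0,3}(SL_2(\C))^{V(\mathcal{T})}]$ is a product of Plücker generators, one at each vertex for each strand passing through it; regrouping this product by strand gives $V(P,\phi,B,A)=\prod_k V(s_k,B_k,A_k)$, where $B_k$ is the restriction of $B$ to the internal edges crossed by $s_k$. Since $\phi_e$ identifies the two halves of a single strand at each internal edge $e$, the coherence condition decouples strand by strand, so the coherent $B$ are exactly the independent choices of coherent $B_k$; and since the signature counts DOWN-oriented outgoing ends, it is multiplicative, $\sigma(P,\phi,B,A)=\prod_k\sigma(s_k,B_k)$. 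Summing over $B$ then factors as a product of the per-strand signed sums, yielding the displayed identity with each factor $V(s_k)$ a single-path $\mathcal{T}$-tensor.

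With this in hand both inclusions are immediate. By Proposition \ref{plucker} each $V(s_k)$ equals the Plücker generator $P_{i_kj_k}(A_k)$, so every $\mathcal{T}$-tensor equals the Plücker monomial $\prod_k P_{i_kj_k}(A_k)$; conversely, given any Plücker monomial I would take $P$ to be the union of the paths corresponding to its factors, read off the induced gluing $\phi$ and leaf-orientation data $A$, and invoke the same factorization to recover the monomial as $V(P,\phi,A)$. I expect the only real obstacle to be the bookkeeping in the factorization step, namely verifying that the coherence constraint and the signature genuinely decouple across strands so that the sum over $B$ splits as a product, and that the local path data at the vertices reassembles, via $\phi$, into exactly the global leaf-to-leaf strands with no internal dead-ends. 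Everything else is a direct translation between the two combinatorial descriptions.
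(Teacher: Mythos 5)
Your proposal is correct and follows essentially the same route as the paper: the paper's proof likewise decomposes a $\tree$-tensor into the leaf-to-leaf strands picked out by the $\phi_e$, identifies each strand with a Pl\"ucker generator via Proposition \ref{plucker}, and uses closure of $\tree$-tensors under multiplication for the reverse inclusion. Your factorization argument simply makes explicit the "easily verified" decoupling of the coherence condition and the signature across strands that the paper leaves to the reader.
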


\begin{proof}
The set of $\tree$-tensors is closed under multiplication in $\C[M_{0, 3}(SL_2(\C))^{V(\tree)}]$, so all Pl\"ucker monomials  must be in the set of $\tree-$tensors by Proposition \ref{plucker}.   Furthermore, for any $\tree$ tensor, we can follow the bijections $\phi$ at pairs of joined edge elements to pick out a path in $\tree.$  It is easily verified that the $\tree$ tensor defined by this path is a Pl\"ucker generator. Since all $\tree$-tensors are monomial products of paths, it follows that any $\tree$-tensor can be identified with a monomial product of Pl\"ucker generators. 
\end{proof}

We can now prove the main result of this subsection.

\begin{proposition}\label{gammaequiv}
The sets of $\Gamma$-tensors, $\mathcal{S}(\Gamma) \subset \C[M_{0, 3}(SL_2(\C))^{V(\Gamma)}]$ are all in the invariant subalgebra $\C[M_{\Gamma}(SL_2(\C))] \subset \C[M_{0, 3}(SL_2(\C))^{V(\Gamma)}]$.  Furthermore, these sets all coincide $\mathcal{S}(\Gamma') = \mathcal{S}(\Gamma)$ under the isomorphisms $M_{\Gamma'}(SL_2(\C)) \cong M_{\Gamma}(SL_2(\C))$ defined by admissable maps of graphs.  
\end{proposition}

\begin{proof}
We take an edge $e$ which connects two distinct vertices $v, u \in V(\Gamma),$ and we let $\tree_e \subset \Gamma$ be the link of these endpoints.  Fixing a $\Gamma-$tensor $V(P, \phi, A)$, we consider a linear decomposition into multiplies of $\tree_e$-tensors given by summing over fixed orientations $B_e$ on the leaves of $\tree_e$.

\begin{equation}\label{treedecomposition}
V(P, \phi, A) = \sum_{B_e} (-1)^{\sigma_{B_e}} V(P_{\tree_e}, \phi_e, B_e) \otimes V(P_{\Gamma \setminus \{e\}}, \phi_{\Gamma \setminus \{e\}}, B_{\Gamma \setminus \{e\}})\\
\end{equation}

\noindent
Here $V(P_{\Gamma \setminus \{e\}}, \phi_{\Gamma \setminus \{e\}}, B_{\Gamma \setminus \{e\}})$ is a tensor on the graph $\Gamma \setminus \{e\}$ which depends on dual orientation data,  and $(-1)^{\sigma_{B_e}}$ is some sign.  Notice that if $V(P_{\tree_e}, \phi_e B_e)$ is an $SL_2(\C)$ invariant for each $B_e,$ then $V(P, \phi, A)$ is as well.  But as each of these forms is a $\tree_e$ tensor, this follows from Theorem \ref{goinv}.  

Each $V(P_{\tree_e}, \phi_e, A_e)$ is a product of Pl\"ucker generators independent of $\tree_e$.  This implies that each $V(P_{\tree_e}, \phi_e A_e)$ can be replaced with $\tree$-tensor for any fixed tree $\tree$ with $k+m$ leaves, and so $\mathcal{S}(\Gamma) = \mathcal{S}(\Gamma')$ for any mutation equivalent graph.

It remains to establish $SL_2(\C)$ invariance when $e$ connects a vertex $v$ to itself. If $\Gamma$ is not a single trinode with a loop, we may use admissable map equivalence on another edge to reduce to the previous case.  To treat the remaining case, we consider $M_{0, 3}(SL_2(\C))/SL_2(\C)$, where $SL_2(\C)$ acts diagonally through the actions on two leaves.  The proof of Lemma \ref{4plucker} can be used to handle this case, we leave this to the reader. 
\end{proof}

For any $\Gamma$-tensor $V(P, \phi, A)$, we can produce a new $\Gamma$-tensor $V(P_{\gamma}, \phi, A)$ by reversing the Pl\"ucker monomials $p_{ij} \to p_{ji}$ along a (possibly non-simple) path $\gamma$.  The following are easily verified by hand.

\begin{proposition}\label{gammaorient} If $\gamma$ is a closed, $V(P_{\gamma}, \phi,  A) = V(P, \phi, A),$ if $\gamma$ is open, $V(P_{\gamma}, \phi, A) = -V(P, \phi, A).$
\end{proposition}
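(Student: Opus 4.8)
The plan is to show directly that $V(P_{\gamma}, \phi, A) = (-1)^{m+k} V(P, \phi, A)$, where $m$ is the number of local segments comprising $\gamma$ (one Pl\"ucker generator at each vertex that $\gamma$ passes through, counted with multiplicity) and $k$ is the number of internal edges of $\Gamma$ that $\gamma$ crosses (again with multiplicity), and then to read off the parity of $m+k$. The key initial observation is that the reversal operation leaves both the gluing data $\phi$ and the leaf orientation data $A$ untouched, so $V(P_{\gamma}, \phi, A)$ and $V(P, \phi, A)$ are sums over the \emph{same} index set of coherent orientation assignments $B$. It therefore suffices to compare the signed summands $\sigma(P_{\gamma}, \phi, B, A)\,V(P_{\gamma}, \phi, B, A)$ and $\sigma(P, \phi, B, A)\,V(P, \phi, B, A)$ for each fixed $B$.

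For a fixed $B$, two independent sources of sign appear. First, reversing a single segment replaces its Pl\"ucker generator $p_{ij}$ by $p_{ji} = -p_{ij}$; since every one of the $m$ segments of $\gamma$ is reversed, this gives $V(P_{\gamma}, \phi, B, A) = (-1)^m V(P, \phi, B, A)$. Second, I would track the change in the signature $\sigma$. By definition $\sigma$ counts the outgoing-\textbf{DOWN} endpoints at connecting edges, so an edge not crossed by $\gamma$ keeps its orientation and its contribution. At each internal edge that $\gamma$ does cross, reversal interchanges the incoming and outgoing endpoints; since coherence forces these two identified endpoints to carry opposite orientations, the outgoing orientation flips between \textbf{UP} and \textbf{DOWN}, flipping the local indicator and hence multiplying $\sigma$ by $-1$. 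Summing over the $k$ crossed edges gives $\sigma(P_{\gamma}, \phi, B, A) = (-1)^k \sigma(P, \phi, B, A)$. Combining the two factors and summing over $B$ yields $V(P_{\gamma}, \phi, A) = (-1)^{m+k} V(P, \phi, A)$.

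It remains to evaluate the parity of $m+k$, which is purely topological. An open $\gamma$ runs from a leaf to a leaf through $m$ segments joined at $m-1$ internal edges, so $k = m-1$ and $m+k = 2m-1$ is odd, giving the sign $-1$; a closed $\gamma$ is a cycle through $m$ segments joined at $m$ internal edges (including the degenerate case of a single loop edge at a trinode, where $m = k = 1$), so $k = m$ and $m+k$ is even, giving $+1$. The main obstacle I anticipate is the bookkeeping in the middle step: making the notion of ``outgoing endpoint at a connecting edge'' precise enough that the sign flip under reversal is unambiguous, and verifying that the counts remain correct when $\gamma$ is non-simple and may traverse a vertex or an edge more than once. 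The remedy is that segments and crossings are counted with multiplicity, which the per-summand argument accommodates automatically, since each traversal contributes its own generator and its own junction.
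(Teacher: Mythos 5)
Your proof is correct and supplies precisely the sign-counting that the paper dismisses as ``easily verified by hand'': the factor $(-1)^m$ from reversing each Pl\"ucker generator along $\gamma$, the factor $(-1)^k$ from the signature flipping at each crossed internal edge (since coherence forces opposite orientations on the two identified endpoints, so swapping the outgoing/incoming roles flips the DOWN-indicator), and the parity count $k=m-1$ for an open path versus $k=m$ for a closed one. Working summand-by-summand over a fixed coherent $B$ also handles non-simple paths and loop edges correctly, so there is no gap.
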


\subsection{Initial forms of $\Gamma$-tensors}\label{weightfiltration}

There is a spin diagram $\Phi_{a(P, \phi, A)}$ associated to any $\Gamma$ tensor $V(P, \phi, A)$.  For an edge $e \in E(\Gamma)$, 
$a(P, \phi, A)(e)$ is the number of paths crossing $e$, and for any leaf $\ell_i$, the quantities $a(P, \phi, A)(x_i), a(P, \phi, A)(y_i)$
are the number of $UP$, respectively $DOWN$ orientations.    

\begin{proposition}\label{gammainitial}
Let $\Gamma$ be trivalent, then $\Phi_{a(P, \phi, A)}$ is the leading term of $V(P, \phi, A)$ in the spin diagram filtration of
$\C[M_{\Gamma}(SL_2(\C))]$. 
\end{proposition}

\begin{proof}
We observe that the initial term $in(V(P, \phi, A)) \in \C[M_{\Gamma}(SL_2(\C))]$ can be computed in the algebra $\C[M_{0, 3}(SL_2(\C))^{V(\Gamma)}]$, as the $SL_2(\C)^{E(\Gamma)}$ invariance of the spin diagram filtration guarantees that the initial term of an invariant in $\C[M_{0, 3}(SL_2(\C))^{V(\Gamma)}]$ is also invariant.  For any choice $B$ of orientations on the internal edges of $\Gamma$, the initial term $in(V(P, \phi, B, A)) \in \C[M_{0, 3}(SL_2(\C))^{V(\Gamma)}$ is a tensor product (over $v \in V(\Gamma)$) of initial terms of the monomials defined by the $P_v$ in $\C[M_{0, 3}(SL_2(\C))]$.  Each of the $in(V(P, \phi, B, A))$ lies in the $a(P, \phi, A)$ isotypical component of $\C[M_{0, 3}(SL_2(\C))^{V(\Gamma)}]$, so it follows that $in(V(P, \phi, A))$ equals the sum of these terms.   Initial terms with different quantities of the two possible orientations $\{UP, DOWN\}$ at an edge are linearly independent, as this is the case for the associated elements of $\C[H_{0,3}^{V(\Gamma)}]$. Since  $\mathcal{V}(P, \phi, A)$ always has exactly one term with all outgoing assignments $UP$, this sum cannot vanish.  
\end{proof}

\begin{corollary}\label{valgamma}
A valuation $v_x$,$x \in C_{\Gamma},$ is computed on $\mathcal{V}(P, \phi, A) \in \mathcal{S}(\Gamma) \subset \C[M_{\Gamma}(SL_2(\C))]$ as follows. 

\begin{equation}
v_x(\mathcal{V}(P, \phi, A)) = \sum_{e \in E(\Gamma)} x(e)a(P, \phi, A)(e)\\
\end{equation}

\end{corollary}

We have stated this corollary for trivalent $\Gamma$, but the general case can be recovered by considering
a weighting on $\Gamma$ as a weighting with $0$ entries on a trivalent cover $\tilde{\Gamma} \to \Gamma.$

\subsection{Planar $\Gamma$-tensors}

Now we use a ribbon structure on $\Gamma$ to find a way to lift spin diagrams $a\in H_{\Gamma}$ back to $\C[M_{\Gamma}(SL_2(\C))]$. The elements which result from this construction are called planar $\Gamma$-tensors. For a vertex $v \in V(\Gamma),$ with edges $e, f, g$, we consider the weightings $a(e), a(f), a(g)$ defined by $a \in H_{\Gamma}.$  We place linear orders on the endpoints of these paths which are consistent with the cyclic ordering.  The arrangement of paths is declared to be planar if for every ordering of edges $e \to f$, and pair of paths $p_1, p_2$ if the endpoint of $p_1$ comes before the endpoint of $p_2$ in $e$, then the endpoint of $p_2$ comes before the endpoint of $p_1$ in $f.$  There is a unique planar way to arrange $x_{ij}$ paths inside this object such that the number paths passing through a given edge $i$ is $w(i),$ this is given by the equations  $x_{ef} = \frac{1}{2}(w(e) + w(f) - w(g))$, $ x_{fg} = \frac{1}{2}(w(f) + w(g) - w(e))$, $x_{ge} = \frac{1}{2}(w(g) + w(e) - w(f))$. 

\begin{figure}[htbp]
\centering
\includegraphics[scale = .35]{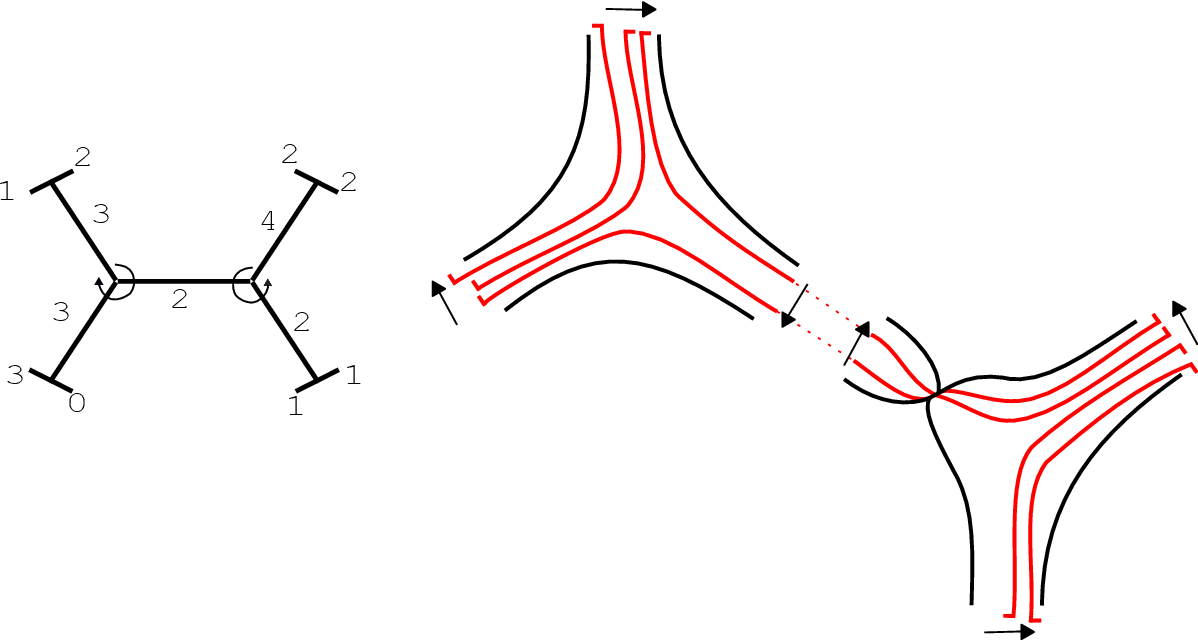}
\caption{A planar $\Gamma-$tensor.}
\label{Tube}
\end{figure}

 If some edge of $v$ is a leaf $\ell_i$, we must decide how to lift the data $a(x_i), a(y_i).$ In our description of $\C[M_{0, 3}(SL_2(\C))]$, these numbers count the number of orientations of each type, we place these on the linearly ordered ends of the paths, with all of the $a(x_i)$ [UP] orientations coming first.   If two trinodes $w, v$ share a common edge $e$, the chosen linear orders of the paths meeting along this edge define a unique identification map $\phi$ by sending first path endpoint to last.  Figure \ref{Tube} depicts an example of this lift.  Finally, following Proposition \ref{gammaorient}, we assign directions on the paths which have endpoints, going from smallest index to largest.  In this way, $a$, along with the chosen ribbon structure on $\Gamma$ define an abstract $\Gamma$-tensor $V(P_a, \phi_a, A_a)$. The $\Gamma$-tensors obtained in this way must satisfy the following properties by construction. 

\begin{proposition}\label{planarassocgraded}
The $V(P_a, \phi_a, A_a)$ are a basis of $\C[M_{\Gamma}(SL_2(\C))],$ and $in(V(P_a, \phi_a, A_a)) = \Phi_a.$ 
\end{proposition}

\subsection{Relations}\label{skein}

We conclude our $SL_2(\C)$ analysis with the skein relations, see also \cite{FG} page 186, and \cite{PS}, for discussions of these relations. First we introduce the notion of a cap on a $\Gamma-$tensor, this is meant to represent a backtrack in the corresponding arrangement of paths.  We fix two paths in a trinode, both with endpoints in an edge $e$, one terminating in this edge, the other terminating elsewhere.  A cap on these two paths is the form $p_{(i, a)(e, DOWN)}p_{(e, UP),(j, b)} - p_{(i, a)(e, UP)}p_{(e, DOWN),(j, b)} $, for $(i, a), (j, b)$ other indices in the trinode at the begining and end of these paths.   The following are verified by calculation. 

\begin{enumerate}
\item If $\{i, j\}$ are different edges, the cap yields $p_{(i, a)(j, b)}.$\\
\item If $i = j$ a common leaf edge, and $a, b$ above are the leaf orientation data on these paths, then the cap is $0$ if $a = b$
and $\pm 1$ if $a \neq b$. 
\item If $i = j$, a common non-leaf edge, and the paths $p_{(i, -)(e, -)}$ and $p_{(e, -), (j, -)}$ connect
to paths $p_{(k, -)(s, -)}$ and $p_{(s, -)(\ell, -)}$ respectively, then the resulting tensor simplifies to a cap on 
 $p_{(k, -)(s, -)}$ and $p_{(s, -)(\ell, -)}$ in the $s$ indices.\\
\item If a caps are applied to the ends of a pair of paths on the same set of edges, the resulting loops contracts to give a multiple of $2.$
\end{enumerate}

\begin{figure}[htbp]
\centering
\includegraphics[scale = .4]{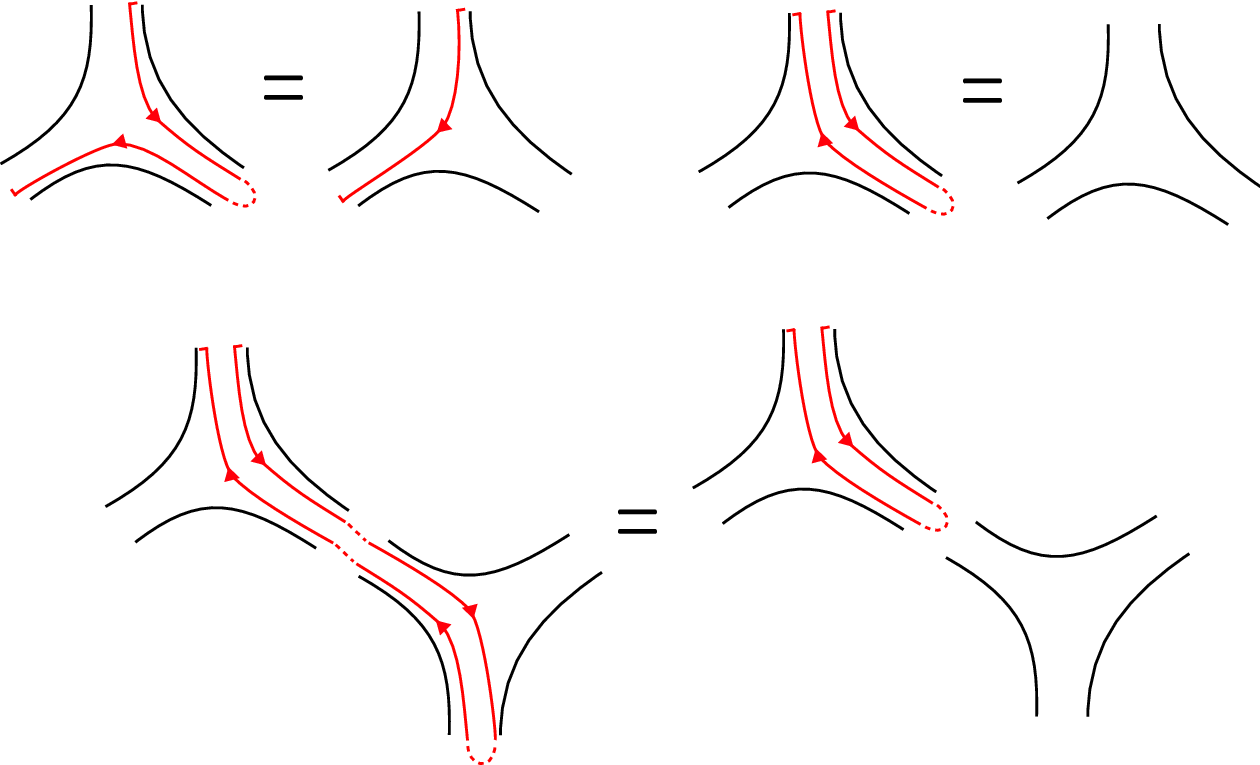}
\caption{Retracting caps.}
\label{Cap}
\end{figure} 

We suppose a linear order has been introduced along the endpoints in each  edge of two paths in a trinode.  In the definition of planar $\Gamma-$tensor, the paths in a trinode are in a planar arrangement if they are connected along these orders, first to last.  If this is not the case, the paths "cross", next we see how to address this situation.

\begin{lemma}\label{triuncross}
Given two paths which cross inside of a trinode, we can perform a Pl\"ucker relation which yields a sum of elements, each with the same ordering of endpoint indices, but with no crossings.
\end{lemma}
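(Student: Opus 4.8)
The plan is to reduce the statement to the single basic Pl\"ucker (straightening) relation available in the trinode. Recall that each vertex carries a copy of $\C[M_{0,3}(SL_2(\C))]$, which we have identified with the $2,6$ Pl\"ucker algebra modulo the determinant normalizations $p_{2s-1,2s}=1$; its only defining relations are the three-term Pl\"ucker relations $p_{ij}p_{k\ell} - p_{ik}p_{j\ell} + p_{i\ell}p_{jk} = 0$ together with $p_{ij} = -p_{ji}$. Two paths in a trinode are given by two Pl\"ucker generators $p_{ab}$ and $p_{cd}$, and a \emph{crossing} means precisely that the linear orders at the shared edges pair the endpoints in the ``wrong'' (non-first-to-last) way. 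First I would set up notation so that a crossing corresponds to a monomial $p_{ab}p_{cd}$ whose four indices, read in the cyclic order induced by the ribbon structure, interleave rather than nest.

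Next I would apply the three-term Pl\"ucker relation to the product $p_{ab}p_{cd}$ of the two crossing generators. The relation rewrites this single interleaving monomial as a signed sum of the two monomials corresponding to the two \emph{planar} (non-crossing) pairings of the same four endpoints, i.e.\ $p_{ab}p_{cd} = p_{ac}p_{bd} \pm p_{ad}p_{bc}$ after accounting for the sign conventions $p_{ij} = -p_{ji}$. Crucially, each term on the right-hand side uses exactly the same four endpoint indices $a,b,c,d$, so it represents the \emph{same} weight diagram $[w]\in H_{\Gamma}$ (the number of path-ends in each edge is unchanged), and by inspection neither resulting pairing interleaves: both are nested, hence planar in the ribbon arrangement. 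Thus the relation trades one crossing for a sum of crossing-free configurations without altering which edges the endpoints occupy.

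The one point that requires care, and which I expect to be the main bookkeeping obstacle rather than a conceptual one, is tracking the signs and the orientation data: because a $\Gamma$-tensor is a signed sum $\sum_B \sigma(P,\phi,B,A)V(P,\phi,B,A)$ over coherent orientations, I must check that applying the Pl\"ucker relation commutes with forming this sum and that the induced signs match those prescribed by $\sigma$ and by Proposition \ref{gammaorient} (reversing orientation along a path changes the tensor by $\pm 1$). Here I would use that the Pl\"ucker relation is applied identically on each vertex-local $\C[M_{0,3}(SL_2(\C))]$ factor, so it extends $SL_2(\C)^{E(\Gamma)}$-equivariantly to $\C[M_{0,3}(SL_2(\C))^{V(\Gamma)}]$ and descends to $\C[M_{g,n}(SL_2(\C))]$; the invariance of the resolved terms then follows from the fact already established that $\tree$-tensors are $SL_2(\C)$-invariant (Proposition \ref{goinv}). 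Finally I would note that the same ordering of endpoint indices is preserved because the relation only repairs the internal pairing $\phi$ and not the linear orders on the edges themselves, which is exactly the claimed conclusion.
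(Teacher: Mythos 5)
Your reduction to a single three-term Pl\"ucker relation on the monomial $p_{ab}p_{cd}$ only covers the easy sub-case, and the main case slips through. Two paths that cross inside a trinode necessarily share an edge of that trinode (each nontrivial path uses two of the three edges), so two of the four indices $a,b,c,d$ lie in the same pair $\{2s-1,2s\}$. If that shared edge is a leaf and the orientations there are opposite, your relation is exactly the paper's ``an internal $\C[M_{0,3}(SL_2(\C))]$ relation does the trick'' case. But if the shared edge is internal, the orientation data there is not fixed: the $\Gamma$-tensor is a signed sum over coherent assignments $B$, glued through $\phi_e$ to partner paths in the adjacent trinode, and the crossing lives in the gluing data and the linear orders, not in the trinode's Pl\"ucker monomial. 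Concretely, the orientation sum contains summands such as $p_{4b}p_{4c}$ with a repeated index, to which no three-term Pl\"ucker relation applies at all; and in the summands like $p_{3b}p_{4c}$ where a relation does apply, it produces the term $p_{34}p_{bc}=p_{bc}$, which deletes both endpoints from the shared edge while the partner paths in the adjacent trinode still terminate there, breaking coherence. So the step you defer as ``bookkeeping'' --- that the Pl\"ucker relation commutes with the sum over $B$ --- is precisely where the argument fails.

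The paper instead works with the glued forms across the shared edge: the crossed configuration is $(p_{a1}p_{4b}-p_{a2}p_{3b})(p_{d1}p_{4c}-p_{d2}p_{3c})$, the uncrossed one swaps $b$ and $c$, and a direct computation shows their difference is the product of two caps, $(p_{a1}p_{d2}-p_{a2}p_{d1})(p_{c4}p_{b3}-p_{c3}p_{b4})$, which then retract by the cap rules established just before the lemma. That identity, not a termwise Pl\"ucker relation, is the actual content, and your proposal would have to reproduce it. Separately, your assertion that every resulting term carries the same weight diagram $[w]$ is false: the cap terms (and the $p_{2s-1,2s}=1$ contractions) strictly lower the edge weights --- this is exactly the ``leading term plus lower-order planar $\Gamma$-tensors'' triangularity used later in the paper --- whereas the lemma only claims preservation of the orderings of endpoint indices, not of the weight diagram.
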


\begin{proof}
We make a general calculation, fix four endpoints $(f_i, y_i)$, and a pair of edges $e_1, e_2$ where paths are glued.  Then
a product of paths:

$$(p_{(f_1, y_1),(e_1, x_1)}p_{(e_2, y_2), (f_4, y_4)} - p_{(f_1, y_1),(e_1, y_1)}p_{(e_2, x_2), (f_4, y_4)}) $$

$$\times$$

$$(p_{(f_2, y_2),(e_1, x_1)}p_{(e_2, y_2), (f_3, y_3)} - p_{(f_2, y_2),(e_1, y_1)}p_{(e_2, x_2), (f_3, y_3)})$$

\noindent
minus its "uncrossing"

$$(p_{(f_1, y_1),(e_1, x_1)}p_{(e_2, y_2), (f_3, y_3)} - p_{(f_1, y_1),(e_1, y_1)}p_{(e_2, x_2), (f_3, y_3)}) $$

$$\times$$

$$(p_{(f_2, y_2),(e_1, x_1)}p_{(e_2, y_2), (f_4, y_4)} - p_{(f_2, y_2),(e_1, y_1)}p_{(e_2, x_2), (f_4, y_4)})$$

\noindent
is the product of two caps on $e_1$ and $e_2$ respectively. 

$$(p_{(f_1, y_1),(e_1, x_1)}p_{(e_1, y_1), (f_4, y_4)} - p_{(f_1, y_1),(e_1, y_1)}p_{(e_1, x_1), (f_4, y_4)}) $$

$$\times$$

$$(p_{(f_2, y_2),(e_2, x_2)}p_{(e_2, y_2), (f_3, y_3)} - p_{(f_3, y_3),(e_2, y_2)}p_{(e_2, x_2), (f_3, y_3)})$$
\end{proof}

\begin{figure}[htbp]
\centering
\includegraphics[scale = .45]{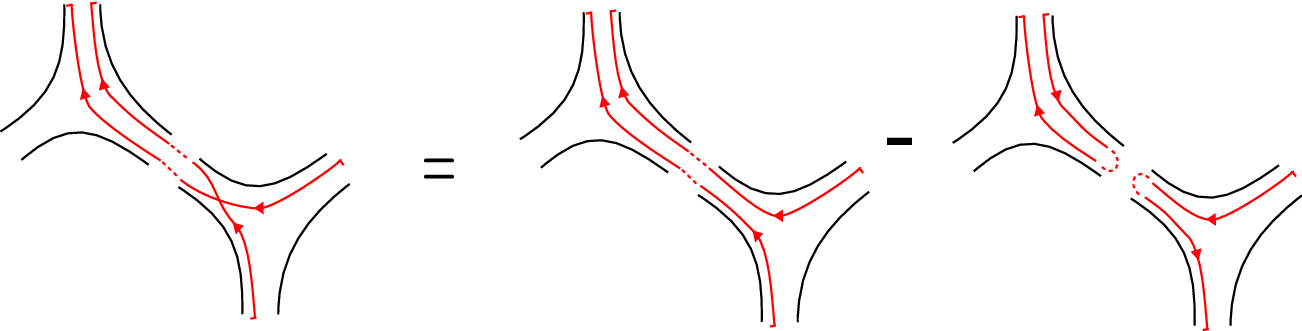}
\caption{An uncrossing relation.}
\label{Cross}
\end{figure}

The endpoints of the paths were not chosen in any particular edges of the meeting trinodes, which allows us to adapt this argument to operations performed locally inside of larger $\Gamma-$tensors. For any tree $\tree \subset \Gamma$, we have seen that we can decompose a $\Gamma$-tensor $V(P, \phi, A)$ as a sum of $\tree-$tensors paired with tensors from $\Gamma \setminus \tree,$ the graph obtained by eliminating all internal edges and  vertices of $\tree,$
as in Equation \ref{treedecomposition}.  Given a ribbon structure on $\Gamma$, we can induce a ribbon structure on a subtree $\tree$, which then gives a cyclic ordering on the leaves.  The Pl\"ucker algebra comes with a notion of planarity, namely a monomial is planar when $(i, a) < (\ell, b) < (j, c) < (k, d)$ never holds for $p_{(i,a)(j,c)}p_{(\ell,b)(k,d)} | M$. We leave it to the reader to verify that our notion of planarity matches this notion.

\begin{lemma}\label{planarplucker}
A Pl\"ucker monomial is planar if and only if it can be given the structure of a planar $\Gamma-$tensor. 
\end{lemma}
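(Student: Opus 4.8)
The plan is to translate both notions of planarity into a single statement about chords on the boundary of a disk, and then prove the equivalence topologically. First I would fix the dictionary. By Proposition \ref{goinv} a Pl\"ucker monomial $M$ in $\C[M_{0,n}(SL_2(\C))]$ is exactly a $\tree$-tensor for the tree $\tree$, and by following the identifications $\phi$ at joined edges (as in the proof of Proposition \ref{plucker}) each generator $p_{ij} \mid M$ is read off as a single directed path whose two ends terminate at the endpoints labelled $i$ and $j$. The ribbon structure on $\tree$ makes its thickening a disk (a tree has $\beta_1 = 0$, so the ribbon surface has a single boundary component), and the leaf endpoints $1, 2, \ldots$ sit on this boundary circle in the induced cyclic order. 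Thus $M$ determines a chord diagram on the circle, and I would note at the outset that the combinatorial condition ``$i < \ell < j < k$ for $p_{ij}p_{\ell k}\mid M$'' is precisely the statement that the chords $\{i,j\}$ and $\{\ell,k\}$ have interleaved endpoints, i.e. that the diagram contains a crossing. So $M$ is a planar Pl\"ucker monomial if and only if its chord diagram is non-crossing.

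For the direction ``planar $\tree$-tensor $\Rightarrow$ planar Pl\"ucker monomial'' I would argue by contraposition using the Jordan curve theorem. If $M$ is not planar there are generators $p_{ij}p_{\ell k}\mid M$ with $i<\ell<j<k$, so the corresponding two arcs sit inside the disk with endpoints interleaved along the boundary circle; any two such arcs must intersect. Pushing this intersection into the combinatorics of $\tree$, the two paths must cross inside at least one trinode they both traverse, which violates the ``last to first'' local condition in the definition of a planar $\tree$-tensor. Hence a planar $\tree$-tensor cannot realize a non-planar $M$, which is the desired implication.

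For the converse I would induct on the number of trinodes of $\tree$, with the single-trinode ($M_{0,3}$) case as the base: there the three groups of endpoints on the three edges, together with the cyclic order, force the unique planar pairing with multiplicities $x_{ef},x_{fg},x_{ge}$ appearing in the planar-tensor construction. For the inductive step I would cut an internal edge $e$, splitting $\tree$ into subtrees $\tree_1,\tree_2$ whose leaf endpoints occupy two \emph{contiguous} arcs of the boundary circle; a non-crossing chord diagram then restricts to non-crossing diagrams on each subtree together with a nested family of chords through $e$, and by induction each restriction is the canonical planar tensor, which reassemble along $e$ (first-to-last) into the planar $\tree$-tensor. As a consistency check one can verify that both families are bases indexed by the same weight data $[w]\in H_\Gamma$, matching Proposition \ref{planarassocgraded}. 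I expect the main obstacle to be this inductive step: carefully verifying that the local ``last-to-first'' ordering convention at each trinode is exactly the bookkeeping that makes the non-crossing chord diagram reassemble globally, and that cutting $e$ genuinely preserves contiguity of the cyclic order on each side; the crossing-resolution computation of Lemma \ref{triuncross} is the local model that guarantees no hidden crossings are introduced in the gluing.
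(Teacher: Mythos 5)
Your argument is correct. There is in fact no proof in the paper to compare against: the lemma is stated immediately after the sentence ``We leave it to the reader to verify that our notion of planarity matches this notion,'' so your write-up is supplying the verification the paper omits. The route you take is the natural one: using Proposition \ref{goinv} and the path-reading from the proof of Proposition \ref{plucker} to identify a $\tree$-tensor with a chord diagram on the boundary circle of the thickened tree, observing that the condition $i<\ell<j<k$ for $p_{ij}p_{\ell k}\mid M$ is precisely interleaving of chord endpoints, then getting one implication from the Jordan curve theorem (disjointly embedded arcs in a disk cannot have interleaved boundary endpoints) and the other from induction over trinodes, cutting along an internal edge and using that the ``last to first'' gluing is exactly the non-crossing condition for a band. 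The one point you should make explicit rather than absorb into the dictionary is the bookkeeping at the leaves: the paper's planarity condition is phrased on the column indices $1,\dots,2n$, so two chords meeting the same leaf pair $\{2s-1,2s\}$ are ordered there by which column they occupy, and this must be checked to agree with the convention in the definition of a planar $\Gamma$-tensor that all $w(\ell,1)$ [UP] endpoints precede the [DOWN] ones. Since [UP] and [DOWN] are identified with $2s-1$ and $2s$ and the boundary circle carries the indices in their natural cyclic order, the two conventions do coincide, but this convention-matching is essentially the content of the lemma and deserves a sentence in the proof.
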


The Pl\"ucker relations correspond locally to those we describe in Lemma \ref{triuncross}, we can therefore modify $V(P, \phi, A)$ by replacing each $V(P_{\tree}, \phi_{\tree}, A_{\tree}, B_{\tree})$ in the expansion with a sum of two forms produced by  a Pl\"ucker relation.  Each of the resulting summands have the same index data $A_{\tree}, B_{\tree}.$ This observation allows us to write $V(P, \phi, A) = V(P'\phi', A') - V(P'', \phi'', A'')$, where the two forms on the right hand side satisfy the duality condition on the orientation indices at the boundary edges where $\tree, \Gamma \setminus \tree$ meet.   It is a straightforward case check to show that caps can be retracted, and directions in the resulting paths can be coherantly reversed to conform to the definition of $\Gamma$-tensor.  These calculations yield rules for how closed and open paths interact, see Figure \ref{Tube2}. 

\begin{figure}[htbp]
\centering
\includegraphics[scale = .35]{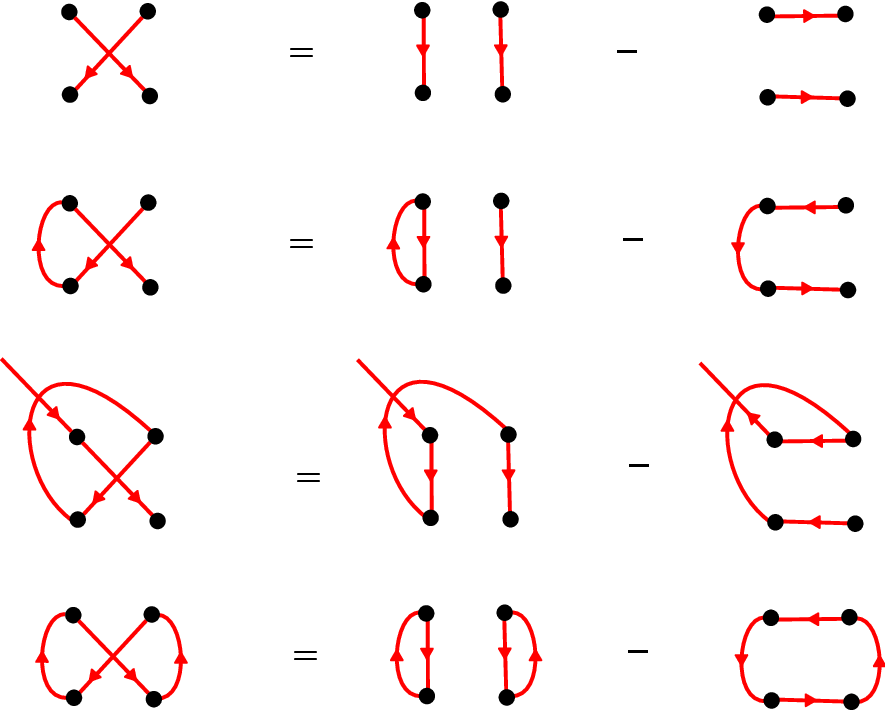}
\caption{Rules for resolving crossings.}
\label{Tube2}
\end{figure}

Now we outline how these relations can be used to expand any $V(P, \phi, A)$ into planar $\Gamma-$tensors (note that we already know that this is the case on a tree $\tree$ by Lemma \ref{planarplucker}). We assign linear orders to the endpoints of the paths terminating at each edge in a trinode of $\Gamma$, such that the $\phi_e$ are all planar bijections, and all leaf indices are as in the definition of planar $\Gamma$-tensor. This presents $V(P, \phi, A)$ as ''almost" planar, with crossings inside the trinodes of $\Gamma.$  We use Pl\"ucker relations as in Lemma \ref{triuncross} to make the arrangements of paths in each trinode planar. Uncrossings do not affect the order of the indices along the edges, so we obtain a sum of planar $\Gamma$-tensors.  Note that this sum is independent of our choices, as the planar $\Gamma-$tensors form a basis of $\C[M_{g, n}(SL_2(\C))]$.

 In order to take the product of two planar $\Gamma-$tensors, one chooses any mixing of the orderings along the edges which satisfies the planar conditions, and expands, as above.  By this process, a product $V(P_w, \phi_w, A_w)V(P_{w'}, \phi_{w'}, A_{w'})$ of two planar $\Gamma$-tensors is $V(P_{w + w'}, \phi_{w + w'}, A_{w +w'})$ plus a sum of planar $\Gamma-$tensors with strictly smaller spin diagrams in the partial order on edge weights.

\subsection{Regular functions associated to $\Gamma$-tensors}\label{idskein}

We restrict attention to the case $n = 0$ and show that $\Gamma-$tensors coincide with a special class of regular functions
on the character variety $\mathcal{X}(F_g, SL_2(\C))$ called trace-word functions. The trace-word function $\tau_{\omega}$ associated to a word $\omega \in F_g$ is the regular function that takes $(A_1, \ldots, A_g) \in \mathcal{X}(F_g, SL_2(\C))$ to the complex number $tr(\omega(A_1, \ldots, A_g))$.   Let $\mathcal{W}_g \subset \C[\mathcal{X}(F_g, SL_2(\C))]$ be the set of these functions, we will prove the following proposition with a series of lemmas.

\begin{proposition}\label{tracetensor}
The set $\mathcal{S}(\Gamma) \subset \C[M_{\Gamma}(SL_2(\C))] \cong \C[\mathcal{X}(F_g, SL_2(\C))]$ coincides with the set of monomials in the elements of $\mathcal{W}_g$. 
\end{proposition}

By Proposition \ref{gammaequiv}, it suffices to prove Proposition \ref{tracetensor} for $\Gamma = \Gamma_{g, 0}.$ We fix a direction
on the loop edges of $\Gamma_{g, 0}$, and show that these sets are combinatorially in bijection with each other.  Fix a connected abstract $\Gamma_{g,0}$-tensor $\mathcal{V}(P, \phi)$.  We let $T_g$ be the tree obtained by splitting the loop edges of $\Gamma_{g, 0}$, the leaves of this tree are labelled with the $2g$ indices $2i-1, 2i$ $1 \leq i \leq g$.  The label $2i-1$ is given to the outgoing leaf according to the direction assigned to $\Gamma_{g, 0}$, and $2i-1,$$2i$ leaves are identified in the quotient map $T_g \to \Gamma_{g,0}.$  

 Let $p_{i_1, j_1} \in P$ be a path in $V(P, \phi).$  The endpoint $j_1$ is connected with the starting point of the next path $p_{(i_2, j_2)},$ so $j_1 = 2a_1 -1, i_2 = 2a_1$ or $j_1 = 2a_1, i_2 = 2a_1-1$.   Continuing this way, we obtain a word $\omega(\mathcal{V}(P, \phi), p_{(i_1, j_1)}) = x_{a_1}^{\epsilon_1}x_{a_2}^{\epsilon_2}\ldots x_{a_m}^{\epsilon_m}$, where the sign $\epsilon_i$ is determined by the rule $\{j_1 = 2a_1 -1, i_2 = 2a_1 = 2\} \implies \epsilon_i = 1$, $\{j_1 = 2a_1, i_2 = 2a_1-1\} \implies \epsilon_i = -1.$  This word is reduced as the paths defined by $\Gamma_{g,0}$ tensors do not have caps by definition. 

 Given a word $\omega$, we may reverse this recipe to obtain a $\Gamma_{g,0}-$tensor $\mathcal{V}(P_{\omega}, \phi_{\omega}).$  All paths in $T_g$ are determined by their endpoints, so we have a $1-1$ map from the set of connected $\Gamma_{g,0}$-tensors $\mathcal{V}(P, \phi)$ with a choice of initial path  $p_{(i_1,j_1)} \in S$ to the set of reduced words in $F_g.$  Changing the initial path amounts to changing the word $\omega(\mathcal{V}(P, \phi), p_{(i_1,j_1)})$ by a cyclic permutation, so we have proved the following lemma. 

\begin{lemma}
The set $\mathcal{S}_{con}(\Gamma_g)$ of connected $\Gamma-$tensors is in bijection with cyclic equivalence classes of reduced words in $F_g.$
\end{lemma}

Now we show that any connected $\Gamma_{g,0}-$tensor can be ''untangled."

\begin{definition}
Let $V_g \in \C[M_{\Gamma_{g,0}}(SL_2(\C))] = \C[SL_2(\C) \backslash SL_2(\C)^{2g}/SL_2(\C)^g]$ be the $\Gamma_{g,0}$ tensor defined 
by joining the paths $p_{(2g, 1)} \to p_{(2, 3)} \to \ldots \to p_{(2g-2, 2g-1)}$, as in Figure \ref{cycle}. 
\end{definition}

\begin{figure}[htbp]
\centering
\includegraphics[scale = 0.5]{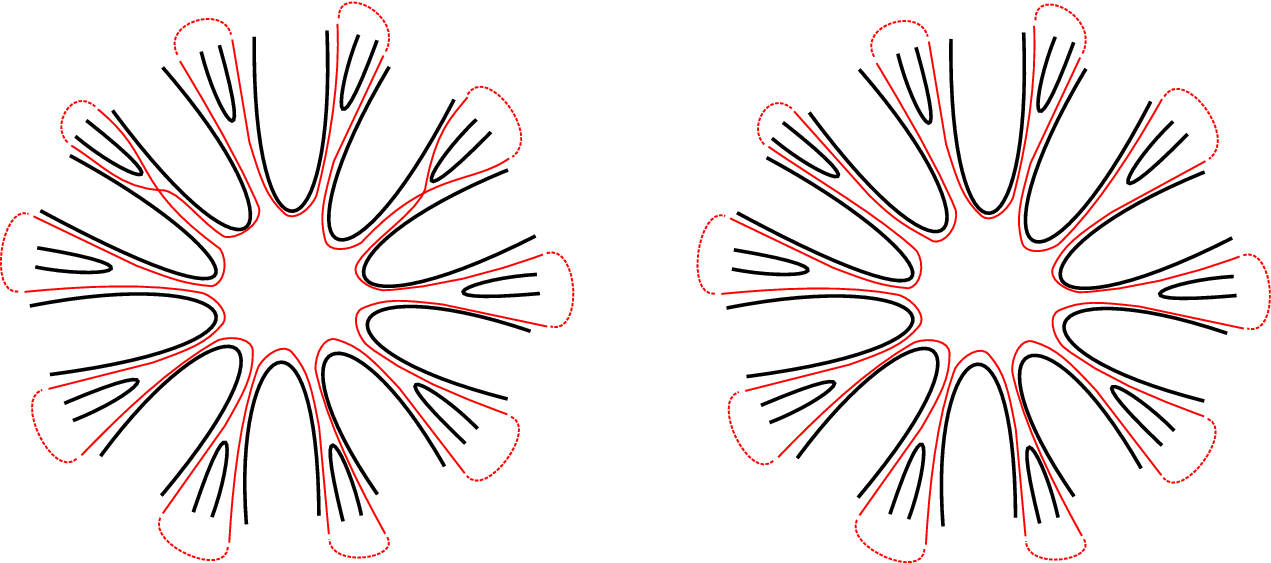}
\caption{Right: $V_{10}$,  Left: A $\Gamma_{10,0}-$tensor obtained from a word with inverses.}
\label{cycle}
\end{figure}

\noindent
Fix a reduced word $\omega \in F_g$, and we let $n = |\omega|$.  There is a natural map $\phi_{\omega}: \mathcal{X}(F_g, SL_2(\C)) \to \mathcal{X}(F_n, SL_2(\C))$ defined by sending $(A_1, \ldots, A_g)$ to the entries of $\omega$ in order, $(\omega_1(\vec{A}), \ldots, \omega_n(\vec{A)}).$ The space $\mathcal{X}(F_g, SL_2(\C))$ is the quotient $SL_2(\C) \backslash SL_2(\C)^{2g}/ SL_2(\C)^g = M_{0, 2g}(SL_2(\C))/SL_2(\C)^g$, by way of the map $\pi_{0, g}(k_1, h_1, \ldots, k_g, h_g) = (k_1h_1^{-1}, \ldots, k_gh_g^{-1}).$  We define a map $\phi_{\omega}'$ by sending $(k_1, h_1, \ldots, k_g, h_g)$ to $(\omega_1(\vec{k}), \omega_1(\vec{h}), \ldots, \omega_n(\vec{k}), \omega_n(\vec{h}))$, this commutes with $\phi_{\omega}$ under the isomorphisms $\pi_{0, g}, \pi_{0, n}.$  We will see where $V_g$ goes under $\phi_{\omega}^*.$ 

\begin{lemma}
\begin{equation}
\phi_{\omega}^*(V_n) = V(P_{\omega}, \phi_{\omega})\\
\end{equation}
\end{lemma}

\begin{proof}
When the Pl\"ucker element $p_{(2i, a), (2i+1, b)}$ is evaluated on $\phi_{\omega}'(k_1, h_1, \ldots, k_g, h_g)$ it gives the determinant of the $a$ column of $\omega_i(\vec{h})$ and the $b$ column of $\omega_{i+1}(\vec{k}).$  The pullback $\phi_{\omega}^*(V_n)$ therefore coincides with $V(P_{\omega}, \phi_{\omega})$ by definition.     
\end{proof}

We now introduce the model trace-word $\tau_g = tr(A_1A_2 \ldots A_g)$.  By the previous lemma, if $\tau_n = V_n \in \C[\mathcal{X}(F_n, SL_2(\C))]$, we have $\mathcal{V}(P_{\omega}, \phi_{\omega}) = \tau_{\omega} = tr(\omega(\vec{A})),$ and Proposition \ref{tracetensor}  follows. 

\begin{lemma}\label{rosetrace}

\begin{equation}
\tau_n = V_n \in \C[\mathcal{X}(F_n, SL_2(\C))]\\
\end{equation}
\end{lemma}

\begin{proof}
We employ the following commutative diagram. 
$$
\begin{CD}
 ([SL_2(\C)\times SL_2(\C)]/SL_2(\C) \times [SL_2(\C) \times SL_2(\C)]/SL_2(\C))^{g-1} @>>> SL_2(\C) \times SL_2(\C)^{g-1}\\
@VVV @VVV\\
([SL_2(\C) \times SL_2(\C)]/SL_2(\C))^{g-1} @>>> SL_2(\C)^{g-1}\\
\end{CD}
$$

$$
\begin{CD}
 (k_1, h_1), (k_2, h_2), \ldots, (k_g, h_g) @>>> (k_1h_1^{-1}, \ldots, k_gh_g^{-1})\\
@VVV @VVV\\
(k_1h_1^{-1}, h_2k_2^{-1}), \ldots, (k_g, h_g) @>>> (k_1h_1^{-1}k_2h_2^{-1}, \ldots, k_gh_g^{-1})\\
\end{CD}
$$

The horizontal arrows in this diagram give the isomorphism between $M_{0, 2g}(SL_2(\C))/SL_2(\C)^g$ and $\mathcal{X}(F_g, SL_2(\C))$. 
The vertical arrow on the right is the matrix composition operation $(A_1, \ldots, A_g) \to (A_1A_2, \ldots, A_g) \in SL_2(\C)^{g-1}.$  The vertical arrow on the left lifts this to $([SL_2(\C) \times SL_2(\C)]/SL_2(\C))^{g}$, where we can talk about $\Gamma-$tensors.  

Notice that $\tau_{g-1}$ pulls back to $\tau_g$ under the right vertical arrow.  It follows that if we can show that $\tau_1 = V_1 \in \C[SL_2(\C)]^{SL_2(\C)},$ and that $V_{g-1}$ pulls back to $V_g$ under the left vertical arrow, we have the lemma.  The first assertion holds by a straightforward calculation.  For the second assertion, it suffices to show that the piece of $V_g$ connecting the $(k_1, h_1), (k_2, h_2)$ is the piece of $V_{g-1}$ connecting the first pair of indices, evaluated at $(k_1h_1^{-1}, h_2k_2^{-1})$.   Fix $2\times 2$ matrices $A, B, C, D \in SL_2(\C)$ and two column vectors $X, U$. Let the columns of a matrix $K$ be $K_1, K_2$, and let $K_1\wedge K_2$ denote the determinant of $K.$ Furthermore, let $M = AB^{-1}, N = DC^{-1}.$ The lemma follows if 

\small
\begin{equation}
(X\wedge A_1) (B_2\wedge C_1) (D_2\wedge U) - (X\wedge A_2) (B_1\wedge C_1) (D_2\wedge U) - (X\wedge A_1) (B_2\wedge C_2) (D_1\wedge U) + (X\wedge A_2) (B_1\wedge C_2) (D_1\wedge U)\\
\end{equation}
\normalsize

\noindent
is equal to
\small
\begin{equation}
(X \wedge M_1)(N_2 \wedge U) - (X \wedge N_2)(N_1 \wedge U)\\
\end{equation}
\normalsize
\noindent
This can be done by a direct computation. 
\end{proof}

Taking this construction back through Proposition \ref{gammaequiv}, a reduced word $\omega$ gives a connected path in $\Gamma$ with spanning tree $\tree$ as follows. For each entry $\omega_i$, place an appropriately directed path $p_i$ on the associated edges in $E(\Gamma) \setminus E(\tree)$.  Then connect the endpoints $p_i \to p_{i+1}$ of these paths through the unique path in $\tree.$ We note that this identification of $\Gamma-$tensors and trace-words depends on the choice of the spanning tree $\tree$ and a direction on each edge in $E(\Gamma) \setminus E(\tree).$  Indeed, this is also the case with the isomorphism $M_{\Gamma}(SL_2(\C)) \cong \mathcal{X}(F_g, SL_2(\C))$, we explore this issue in a companion paper \cite{M16}.

\begin{remark}
The sum the weights along the edges of $\Gamma_{g,0}$ for the spin diagram of $V(P_{\omega}, \phi_{\omega})$ is the word length of $\omega.$ This shows that the natural filtration on $\C[\mathcal{X}(F_g, SL_2(\C))]$ by trace-word length fits into the framework of the filtrations we consider in this paper. 
\end{remark}

\section{Skein relations and conformal blocks}\label{compact}

In this section we focus on the case $\Gamma$ a trivalent graph, with associated curve $(C_{\Gamma}, \vec{p}_{\Gamma}).$ We will show that the valuations $v_x, x \in \mathcal{C}_{\Gamma}$ in Subsection \ref{weightfiltration} extend to $W_{C, \vec{p}}(SL_2(\C))$, and that the associated graded algebra of $v_x,$ when $x(e) \neq 0$ for all $e \in E(\Gamma)$ is a Rees algebra of the semigroup algebra $\C[H_{\Gamma}].$   This gives a method for constructing presentations of the algebras $W_{C_{\Gamma}, \vec{p}_{\Gamma}}(SL_2(\C))$.

\subsection{The $0, 3$ case}

First we compute a presentation of the coordinate ring of $B_{0, 3}(SL_2(\C)).$  Proposition \ref{g0rees} gives the following inclusion. 

\begin{equation}
W_{0, 3}(SL_2(\C)) = \bigoplus_{i, j, k, L} W_{0, 3}(i, i, j,j, k, k, L) \subset\\  
\end{equation}

$$\bigoplus_{i, j, k, L}[V(i, i) \otimes V(j, j) \otimes V(k, k)]^{SL_2(\C)}t^{L} \subset \C[M_{0, 3}(SL_2(\C))]\otimes \C[t]$$

The $SL_2(\C)$ case of Lemma \ref{ueno} and the Clebsh-Gordon rule imply that the  space $W_{0, 3}(i,i, j, j, k, k, L)$ is either $0$ or $V(i)\otimes V(j) \otimes V(k)$, and that it is non-trivial precisely when the following conditions are satisfied. 

\begin{enumerate}
\item (Clebsh-Gordon) $i, j, k$ are the lengths of the sides of a triangle.\\
\item (Clebsh-Gordon) $i + j + k \in 2\Z.$\\
\item (Lemma \ref{ueno}) $i + j + k \leq 2L$\\
\end{enumerate}

The space  $[V(i, i) \otimes V(j, j) \otimes V(k, k)]^{SL_2(\C)}$ is spanned by those  Pl\"ucker monomials in $\mathcal{S}(0, 3) \subset \C[M_{0, 3}(SL_2(\C))]$ 
with $i, j, k$ endpoints in the first, second, and third edges a trinode, respectively. From this it follows that the $L-$th level of the level filtration is spanned by the Pl\"ucker monomials of degree $\leq L.$  As a consequence $W_{0, 3}(SL_2(\C))$ is presented by the $12$ Pl\"ucker generators and any "empty" generator, subject to $15$ homogenized Pl\"ucker relations.

$$\Pncc \Pnck \Pnkc \Pnkk \Pcnc \Pcnk  \Pknc \Pknk \Pccn \Pckn \Pkcn \Pkkn \tri$$ 
\bigskip

$$\tri^2 -  \Pccn  \Pkkn  + \Pckn \Pkcn = \tri \Pncc - \Pccn \Pknc +  \Pcnc \Pkcn = 0$$

\bigskip

In particular, $B_{0, 3}(SL_2(\C))$ is the codimension $2$ subvariety of the Grassmannian $Gr_2(\C^6)$ defined by the equations $p_{12} = p_{34} = p_{56}.$  A  presentation of the graded coordinate ring of the divisor $D_{0,3} \subset B_{0, 3}(SL_2(\C))$ is obtained by setting the empty generator equal to $0$. 

$$\Pccn  \Pkkn - \Pckn \Pkcn = 0 \ \ \ \ \Pccn \Pknc -  \Pcnc \Pkcn = 0$$

\bigskip

\subsection{Skein relations}

Recall the spaces $W_{C_{\Gamma}, \vec{p}_{\Gamma}}(L) = \bigoplus_{r_i \leq L} W_{C_{\Gamma}, \vec{p}_{\Gamma}}(\vec{r}, \vec{r}, L)$. 

\begin{proposition}\label{levelbasis}
The space $W_{C_{\Gamma}, \vec{p}_{\Gamma}}(L)$ has the set of planar $\Gamma-$tensors with $\leq L$ paths through each vertex $v \in V(\Gamma)$ as a basis. The corresponding filtration on  $V_{C_{\Gamma}, \vec{p}_{\Gamma}}(SL_2(\C))$ has an identical description. 
\end{proposition}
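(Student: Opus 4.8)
The plan is to reduce the statement to the local $g = 0$, $n = 3$ computation via the Rees-algebra description, and then use the planar $\Gamma$-tensor basis together with the skein uncrossing relations to control spanning.

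First I would invoke Proposition \ref{gcorrelate}: the correlation morphism $\kappa_\Gamma$ identifies $W_{C_{\Gamma}, \vec{p}_{\Gamma}}(L)$ with the $L$-th piece $F_\Gamma(L)$ of the level filtration on $\C[M_{g,n}(SL_2(\C))]$, and $\kappa_\Gamma$ is built by gluing the local maps $\kappa_v$ at the vertices, with the $\C^*$-factors at the edges (Proposition \ref{ringfact}) forcing a single common level across all vertices. From the $0,3$ analysis (Proposition \ref{ueno} and Clebsch--Gordan) I recall that $F_{0,3}(L)$ is spanned by the Pl\"ucker monomials of degree $\leq L$, equivalently by the local $\Gamma$-tensors with $\leq L$ paths through the trinode. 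Since the $\C^*$-gluing allows padding a low-degree vertex by ``empty'' generators up to the common level without adding any paths, the level of a $\Gamma$-tensor equals the maximum of its vertex path-counts; hence $F_\Gamma(L)$ is spanned by $\Gamma$-tensors with $\leq L$ paths through every vertex.

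Next I would reduce from arbitrary $\Gamma$-tensors to planar ones. By the skein procedure of Subsection \ref{skein}, every $\Gamma$-tensor expands into a sum of planar $\Gamma$-tensors; the key observation is that the uncrossing of Lemma \ref{triuncross} preserves the endpoint data along each edge, and the cap retractions it produces only remove paths, so no summand in this planar expansion has more paths through any vertex than the original. Therefore $F_\Gamma(L)$ is spanned by planar $\Gamma$-tensors with $\leq L$ paths through each vertex. Linear independence is immediate from Proposition \ref{planarassocgraded}(2), which states that the planar $\Gamma$-tensors form a basis of $\C[M_{g,n}(SL_2(\C))]$, so any subset is independent. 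This yields the basis claim, and as a consistency check one may match dimensions by iterated factorization (Theorem \ref{fact}): $\dim W_{C_{\Gamma}, \vec{p}_{\Gamma}}(L)$ equals the number of weight diagrams $[w] \in H_{\Gamma}$ of level $\leq L$, which is exactly the number of such planar $\Gamma$-tensors by Proposition \ref{planarassocgraded}(1).

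The main obstacle I anticipate is the compatibility claim of the second paragraph: confirming that the conformal-block level produced by $\kappa_\Gamma$ coincides vertex-by-vertex with the combinatorial path-count, and in particular that the $\C^*$-quotient identifies the levels so that the global level is the maximum, not the sum, of the local path-counts. For the final sentence, I would pass to $U^n$-invariants: using the identification of $V_{C_{\Gamma}, \vec{p}_{\Gamma}}(SL_2(\C))$ with the $U^n$-invariant subring of $W_{C_{\Gamma}, \vec{p}_{\Gamma}}(SL_2(\C))$, together with the description in Subsection \ref{unipotent} of the unipotent-invariant $\Gamma$-tensors as those indexed by $U_{\Gamma} \subset H_{\Gamma}$ (all leaf orientations [UP]), the same ``$\leq L$ paths through each vertex'' characterization restricts verbatim, giving the identical description for $V_{C_{\Gamma}, \vec{p}_{\Gamma}}(SL_2(\C))$.
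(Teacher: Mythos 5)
Your argument is correct and follows essentially the same route as the paper: both reduce via Proposition \ref{gcorrelate} to the local $W_{0,3}$ computation, identify the level-$\leq L$ piece with the weight diagrams in $H_{\Gamma}(L)$, and conclude because the planar $\Gamma$-tensors lift these diagrams. The paper phrases the final step as passing to the associated graded of the weight filtration rather than your (equivalent) combination of skein-expansion spanning, independence from Proposition \ref{planarassocgraded}, and the factorization dimension count.
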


\begin{proof}
By Proposition \ref{gcorrelate}, the spin diagrams $\Phi_a$ with $a(e) + a(f) + a(g) \leq 2L$ are a basis of the space $W_{C_{\Gamma}, \vec{p}_{\Gamma}}(L)$, the same therefore holds for the corresponding planar $\Gamma-$tensors $V(P_a, \phi_a, A_a)$.
\end{proof}

\begin{definition}
For a vertex $w \in V(\Gamma)$, let $x_w \in C_{\Gamma}$ be defined by $x_w(e) = 1$ if $e$ has $w$ as an endpoint
and $0$ otherwise.  Let $v_w: \C[M_{g, n}(SL_2(\C)) \to \Z \cup \{-\infty\}$ be the corresponding valuation. 
\end{definition}

\begin{corollary}\label{traceconformal}
The space $W_{C_{\Gamma}, \vec{p}_{\Gamma}}(L)$ has the set of planar $\Gamma-$tensors with $v_w(V(P, \phi, A)) \leq 2L$ 
as a basis.  In the $n = 0$ case, $V_{C_{\Gamma}}(L)$ is spanned by the trace-words $\tau_{\omega}$ with $v_w(\tau_{\omega}) \leq 2L$ for all $w \in V(\Gamma).$ 
\end{corollary}

Next we define an increasing filtration on the affine semigroup $H_{\Gamma}$. 

\begin{definition}
The $L-$th part $H_{\Gamma}(L)$ is defined to be the set of weightings $a \in H_{\Gamma}$ with $a(e) + a(f) + a(g) \leq 2L$ for $e, f, g$ containing a common vertex $w \in V(\Gamma).$
\end{definition}

\begin{proposition}\label{commonedegen}
The Rees algebra $\C[H_{\Gamma}^*]$ of $\C[H_{\Gamma}]$  is an associated graded algebra of $W_{C_{\Gamma}, \vec{p}_{\Gamma}}(SL_2(\C))$. 
\end{proposition}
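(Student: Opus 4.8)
The plan is to establish that $\C[H_{\Gamma}^*]$ arises as the associated graded algebra of $W_{C_{\Gamma}, \vec{p}_{\Gamma}}(SL_2(\C))$ with respect to the weight filtration $F_{\Gamma}$, extended from $\C[M_{g,n}(SL_2(\C))]$ to the Rees algebra. The key observation is that the entire construction is compatible with the two filtrations already at our disposal: the level filtration (whose $L$-th piece is $W_{C_{\Gamma}, \vec{p}_{\Gamma}}(L)$) and the weight filtration $F_{\Gamma}$ (whose associated graded is $\C[H_{\Gamma}]$). First I would recall from Proposition \ref{levelbasis} that the planar $\Gamma$-tensors with $\leq L$ paths through each vertex form a vector-space basis of $W_{C_{\Gamma}, \vec{p}_{\Gamma}}(L)$, and from Proposition \ref{planarassocgraded} that each planar $\Gamma$-tensor $V(P_w, \phi_w, A_w)$ maps to the weight diagram $[w]$ in $\C[H_{\Gamma}]$. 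This gives a direct dictionary between basis elements of $W_{C_{\Gamma}, \vec{p}_{\Gamma}}(SL_2(\C))$ and the weightings $w \in H_{\Gamma}$, now additionally graded by the level $L$.

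Next I would impose the weight filtration $F_{\Gamma}$ on $W_{C_{\Gamma}, \vec{p}_{\Gamma}}(SL_2(\C))$ vertex-by-vertex, exactly as in Subsection \ref{weightfiltration}, using the fact (from Proposition \ref{gcorrelate}) that $W_{C_{\Gamma}, \vec{p}_{\Gamma}}(SL_2(\C))$ is built by $SL_2(\C)^{E(\Gamma)}$-invariants from the local rings $W_{0,3}(SL_2(\C))$, and that the local filtration $F_{0,3}$ is $SL_2(\C)^{E(\Gamma)}$-invariant so that it descends. The multiplicative behavior of planar $\Gamma$-tensors under the skein relations of Subsection \ref{skein} is then the crux: a product $V(P_w, \phi_w, A_w) V(P_{w'}, \phi_{w'}, A_{w'})$ equals $V(P_{w+w'}, \phi_{w+w'}, A_{w+w'})$ plus planar $\Gamma$-tensors of strictly smaller edge weight. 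This shows that multiplication in the associated graded of $F_{\Gamma}$ is precisely addition of weight diagrams, i.e. that $gr_{F_{\Gamma}} W_{C_{\Gamma}, \vec{p}_{\Gamma}}(SL_2(\C))$ is a semigroup algebra on the pairs $(w, L)$ with $w \in H_{\Gamma}$ and $w$ satisfying the level-$L$ inequality $w(v,1)+w(v,2)+w(v,3) \leq 2L$ at every vertex.

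Finally I would identify this semigroup algebra with the Rees algebra $\C[H_{\Gamma}^*]$. The filtration $H_{\Gamma}(L)$ from the preceding definition is exactly the collection of admissible weightings at level $L$, so its Rees algebra $\bigoplus_{L \geq 0} \C[H_{\Gamma}(L)] t^L$ has monomial basis indexed by precisely the pairs $(w, L)$ appearing above, with the same additive multiplication. Appealing to the appendix on Rees algebras for the formal construction, this yields the desired isomorphism $gr_{F_{\Gamma}} W_{C_{\Gamma}, \vec{p}_{\Gamma}}(SL_2(\C)) \cong \C[H_{\Gamma}^*]$.

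I expect the main obstacle to be verifying cleanly that the weight filtration on the global algebra is well-defined and that passing to the associated graded commutes with taking $SL_2(\C)^{E(\Gamma)}$-invariants of the local pieces; that is, that the filtration degenerates the global algebra without introducing cross-terms between vertices beyond the expected lower-order skein corrections. Concretely, one must confirm that the lower-order terms produced by uncrossing relations (Lemma \ref{triuncross}) are genuinely lower in the \emph{edge-weight} partial order, uniformly across all vertices simultaneously, so that $w + w'$ is the unique top-weight diagram of the product. This is where the planarity bookkeeping of Subsection \ref{skein} does the real work, and I would lean on the fact that the planar $\Gamma$-tensors are already known to form a basis (so the leading terms cannot cancel) to close the argument.
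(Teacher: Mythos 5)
Your proposal is correct and follows essentially the same route as the paper: the paper likewise induces the weight filtration on $W_{C_{\Gamma}, \vec{p}_{\Gamma}}(SL_2(\C))$ from its embedding into $\C[M_{g,n}(SL_2(\C))][t]$, invokes Proposition \ref{levelbasis} to see that $H_{\Gamma}(L)$ indexes the planar $\Gamma$-tensor basis of $W_{C_{\Gamma}, \vec{p}_{\Gamma}}(L)$, and concludes that the associated graded is $\C[H_{\Gamma}^*]$. The multiplicative leading-term fact you flag as the crux is exactly the statement already established at the end of Subsection \ref{skein}, so your argument closes as written.
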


\begin{proof}
This is a consequence of Propositions \ref{levelbasis} and \ref{mtor}. 
\end{proof}

\begin{definition}
We let $\mathcal{H}_{\Gamma}(1)$ and be the polytope in $\mathcal{H}_{\Gamma}$ defined by $a(e) + a(f) + a(g) \leq 2$.
\end{definition}

Proposition \ref{commonedegen} implies that $B_{C_{\Gamma}, \vec{p}}(SL_2(\C))$ has a toric degeneration to the toric variety associated
to the rational polytope $\mathcal{H}_{\Gamma}(1)$.  Furthermore, $W_{C_{\Gamma}, \vec{p}_{\Gamma}}(SL_2(\C))$ is generated by those $\Gamma$ tensors whose spin diagrams generate $H_{\Gamma}^*$, subject to skein relations of level $\leq$ the degree of relations necessary to present $H_{\Gamma}^*.$  One can define $U_{\Gamma}^*$ and $\mathcal{U}_{\Gamma}^*(1)$ in an identical manner to $H_{\Gamma}^*$, $\mathcal{H}_{\Gamma}(1)$, which then satisfies the analogue of Proposition \ref{commonedegen} with the algebra $V_{C_{\Gamma}, \vec{p}_{\Gamma}}(SL_2(\C))$.  In general, $H_{\Gamma}^*$ is generated by diagrams of level $\leq g +1$. This follows from a modification of the main results of \cite{BBKM} and \cite{M4}, which pertains to the semigroup algebra $\C[U_{\Gamma}^*].$  Now  we discuss these conclusions for some example graphs.

\subsection{Genus $2$ curves}

The algebras $V_{C_{\Gamma}}(SL_2(\C))$ for both of the genus $2$ trivalent graphs are isomorphic to the affine semigroup algebras $\C[H_{\Gamma}^*].$ In Figure \ref{CExamples} we show the defining polytopes $\mathcal{H}_{\Gamma}(1).$

\begin{figure}[htbp]
\centering
\includegraphics[scale = 0.25]{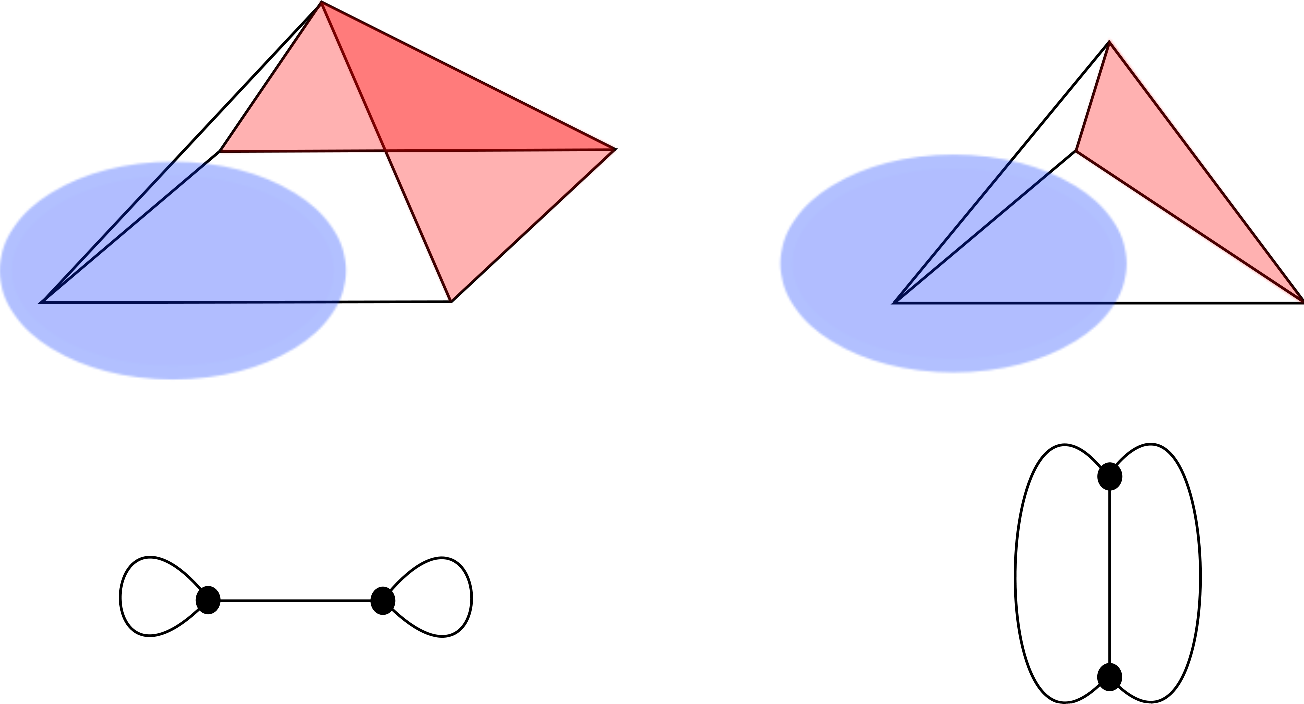}
\caption{The two compactifications of $\mathcal{X}(F_2, SL_2(\C)) = \C^3$.}
\label{CExamples}
\end{figure} 

\noindent
Note that these polytopes agree outside of the highlighted facets, which correspond to the divisor $D_{\Gamma}$.

\subsection{Genus $3$ curves}\label{genus3}

The trivalent genus $3$ graphs are depicted in Figure \ref{g3}.  
We analyze the algebra $V_{C_{\Gamma}}(SL_2(\C))$ for $\Gamma$ the leftmost graph in Figure \ref{g3}.  The semigroup algebra $\C[H_{\Gamma}^*]$ is generated by the simple loops in this graph. Each of these loops $x_{abc}$ is determined by the values $a, b, c$ on the outer edges in Figure \ref{g3label}.

\begin{figure}[htbp]
\centering
\includegraphics[scale = 0.45]{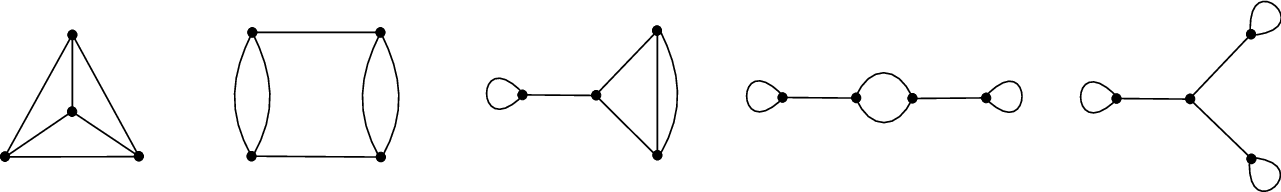}
\caption{The trivalent graphs of genus $3.$}
\label{g3}
\end{figure}

\begin{figure}[htbp]

$$
\begin{xy}
(0, 0)*{\bullet} = "A1";
(16, 0)*{\bullet} = "A2";
(8, 14)*{\bullet} = "A3";
(8, 5)*{\bullet} = "A4";
(8, -2)*{a};
(3, 8)*{b};
(13, 8)*{c};
"A1"; "A2";**\dir{-}? >* \dir{-};
"A2"; "A3";**\dir{-}? >* \dir{-};
"A3"; "A1";**\dir{-}? >* \dir{-};
"A1"; "A4";**\dir{-}? >* \dir{-};
"A2"; "A4";**\dir{-}? >* \dir{-};
"A3"; "A4";**\dir{-}? >* \dir{-};
\end{xy}
$$\\
\caption{The diagram representing $x_{abc}$.}
\label{g3label}
\end{figure}
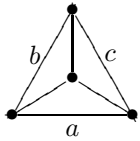

The generating relation is $x_{111}x_{100}x_{010}x_{001} = x_{110}x_{101}x_{011}x_{000}.$  We lift this by expanding the product $x_{110}x_{101}x_{011}x_{000}$ with skein relations. 

$$x_{111}x_{100}x_{010}x_{001} - x_{110}x_{101}x_{011}x_{000} + x_{110}^2x_{000}^2+ x_{101}^2x_{000}^2 + x_{011}^2x_{000}^2$$

$$+ x_{100}^2x_{000}^2 + x_{010}^2x_{000}^2 + x_{001}^2x_{000}^2 + x_{111}^2x_{000}^2 - 4x_{000}^4 + x_{111}x_{100}x_{011}x_{000} + x_{111}x_{010}x_{101}x_{000} + x_{111}x_{001}x_{110}x_{000}$$

$$+ x_{100}x_{010}x_{110}x_{000} + x_{100}x_{001}x_{101}x_{000} + x_{010}x_{001}x_{011}x_{000} = 0$$

\bigskip
The boundary divisor  is defined by the equation $x_{000} = 0$, yielding $x_{111}x_{100}x_{010}x_{001} = 0.$  Each of these generators
can be translated into a trace-word by choosing the spanning tree in $\Gamma$ which misses the edges $a, b, c$ in Figure \ref{g3label}.  The word associated to $x_{abc}$ is then trace of the elements corresponding to the indices $\{a, b, c\}$ with value $1.$

\subsection{A genus $g$ case}

Now we restrict ourselves to the set of graphs $\Upsilon_g$, depicted in Figure \ref{Gammag}. In \cite{M4} the semigroup $H_{\Upsilon_g}^*$ is shown to be generated by those weightings of level $\leq 2$, subject to quadratic relations in these generators.  It follows that generators of $V_{C_{\Upsilon_g}}(SL_2(\C))$ are disjoint unions of loops with edges of multiplicity at most $2$, see Figure \ref{Ggen}. Each of these elements has a pole of order $1$ or $2$ along the divisor $D_{\Upsilon_g} \subset B_{C_{\Upsilon_g}}(SL_2(\C)).$  We identify the generating elements with trace-words by selecting the spanning tree which misses the top edge of each loop in Figure \ref{Gammag} and the loops at the ends of the graph, and ordering these edges from left to right. 

\begin{figure}[htbp]
\centering
\includegraphics[scale = 0.75]{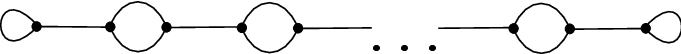}
\caption{The graph $\Upsilon_g$.}
\label{Gammag}
\end{figure} 

\begin{figure}[htbp]
\centering
\includegraphics[scale = 0.5]{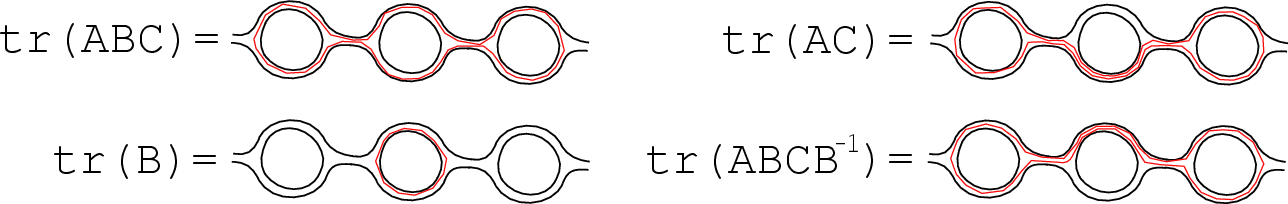}
\caption{Ribbon representations of trace-words.}
\label{Ggen}
\end{figure} 

For an edge $e \in E(\Upsilon_g)$ one can consider products of generators $[w_1][w_2] \in U_{\Upsilon_g}$ which have the same weight on $e$. In this case the left hand sides of these generators with respect to $e$ can be exchanged, producing a relation $[w_1][w_2] = [w_1'][w_2']$, this type of relation is lifted by the operation depicted in Figure \ref{LocalSkein}.  In a similar manner, two loops may intersect in the diagram, these relations are lifted by the operation depicted in Figure \ref{LocalSkein2}. In \cite{M4} it is shown that extensions of the quadratic relations, along with the exchanging relations at edges suffice to generate the relations in $H_{\Upsilon_g}^*$.

\begin{figure}[htbp]
\centering
\includegraphics[scale = 0.65]{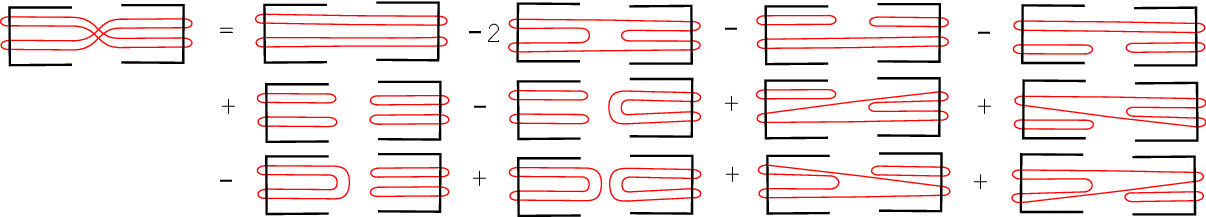}
\caption{Skein relation which resolves a crossing along an edge.}
\label{LocalSkein}
\end{figure} 

\begin{figure}[htbp]
\centering
\includegraphics[scale = 0.65]{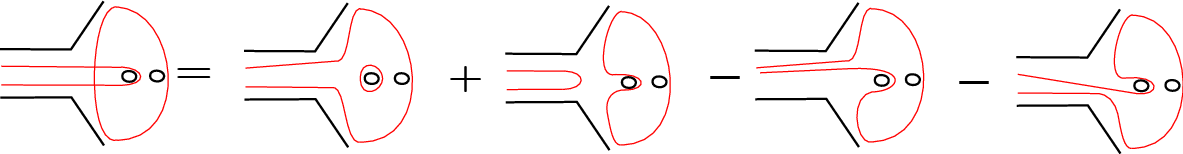}
\caption{Skein relation which resolves a crossing in a trinode.}
\label{LocalSkein2}
\end{figure}

\section{Geometry of $B_{C, \vec{p}}(SL_2(\C))$ and $K_{C, \vec{p}}(SL_2(\C))$}\label{stratificationsection}

  We describe the closed intersection stratification defined by the boundary divisor $D_{\Gamma} \subset B_{C_{\Gamma}, \vec{p}_{\Gamma}}(SL_2(\C))$, and we show that the irreducible components of the compactification of $Spec(\C[H_{\Gamma}])$ defined by the Rees algebra $\C[H_{\Gamma}^*]$ are  degenerations of the components of $D_{\Gamma}.$

\subsection{The ideals $I_S$}

An ideal $I$ in an algebra with a basis of planar $\Gamma-$tensors is called ''planar" if it has a basis of planar $\Gamma$-tensors.  Planar bases exist for any $W_{C_{\Gamma}, \vec{p}_{\Gamma}}(SL_2(\C))/I,$ for $I$ any planar ideal. 

\begin{definition}
For $S \subset V(\Gamma),$ the ideal $I_S \subset W_{C_{\Gamma}, \vec{p}_{\Gamma}}(SL_2(\C))$ is defined to be the homogenous ideal with $L-$th component the span of those $\Gamma-$tensors with $<L$ paths through some $v_i \in S.$
\end{definition}

\begin{proposition}
The ideal $I_S \subset W_{C_{\Gamma}, \vec{p}_{\Gamma}}(SL_2(\C))$, is prime and planar. 
\end{proposition}

\begin{proof}
Primeness of $I_S$ follows from the observation that the product of two $\Gamma$-tensors $V(P_1, \phi_1, A_1),$ $V(P_2, \phi_2, A_2)$ of levels $L_1, L_2$ respectively at $v$ must have $L_1 + L_2$ paths through $v.$     Expanding a $\Gamma-$tensor in $I_S$ by skein relations gives planar $\Gamma-$tensors in $I_S,$ as the resulting planar $\Gamma-$tensors have strictly smaller associated spin diagrams, this proves that $I_S$ is planar. 
\end{proof}

It follows that the graded component $I_S(L)$ has a basis of those planar $\Gamma-$tensors with $<L$ paths through some vertex in $S.$  The component $W_{C_{\Gamma}, \vec{p}_{\Gamma}}(L)/I_S(L)$ has a basis of planar $\Gamma-$tensors which have exactly $L$ paths through each vertex in $S,$ and $\leq L$ paths through each $v \in V(\Gamma) \setminus S$.  Multiplication in $W_{C_{\Gamma}, \vec{p}_{\Gamma}}(SL_2(\C))/I_S$ is computed by expanding a product into planar $\Gamma-$tensors, and removing any summand with $<L$ paths through some vertex in $S$. The $0$-locus $D_S$ of $I_S$ is $\cap_{v \in S} D_v$ as $I_S = \sum_{v \in S} I_v$.

The scheme $Proj(\C[H_{\Gamma}^*])$ is $Spec(\C[H_{\Gamma}])$ compactified by a divisor $K_{\Gamma},$  by arguments which are identical to those used for $V_{C_{\Gamma}, \vec{p}_{\Gamma}}(SL_2(\C))$ and $W_{C_{\Gamma}, \vec{p}_{\Gamma}}(SL_2(\C)).$ The ideals $J_S \subset \C[H_{\Gamma}^*]$ which cut out the irreducible components $K_S \subset K_{\Gamma}$ have an identical description to the $I_S$ as vector spaces, following the bijection between planar $\Gamma-$tensors and spin diagrams.  

\begin{proposition}\label{toricdegenstrata}
Each $D_S$ has a degeneration to the toric scheme $K_S$.
\end{proposition}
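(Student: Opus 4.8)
The plan is to realize the degeneration as the flat family attached to the weight filtration $F_{\Gamma}$, now restricted to the quotient ring $W_{C_{\Gamma}, \vec{p}_{\Gamma}}(SL_2(\C))/I_S$ whose $\Proj$ is $D_S$. Recall from Proposition \ref{commonedegen} that $\gr_{F_{\Gamma}}(W_{C_{\Gamma}, \vec{p}_{\Gamma}}(SL_2(\C))) = \C[H_{\Gamma}^*]$, and from Proposition \ref{planarassocgraded} that the leading term of a planar $\Gamma$-tensor $V(P_w, \phi_w, A_w)$ under $F_{\Gamma}$ is exactly its weight diagram $[w] \in H_{\Gamma}^*$. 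Since $D_S = \Proj(W_{C_{\Gamma}, \vec{p}_{\Gamma}}(SL_2(\C))/I_S)$ and $K_S = \Proj(\C[H_{\Gamma}^*]/J_S)$, it suffices to show that the induced weight filtration on $W_{C_{\Gamma}, \vec{p}_{\Gamma}}(SL_2(\C))/I_S$ has associated graded ring $\C[H_{\Gamma}^*]/J_S$; the Rees-algebra formalism of Section \ref{appendix} then produces a flat family over $\mathbb{A}^1$ whose general fiber is $D_S$ and whose special fiber is the toric scheme $K_S$.

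First I would descend the filtration $F_{\Gamma}$ to the quotient and compute its associated graded. Because $I_S$ is a planar ideal, it carries a basis of planar $\Gamma$-tensors, and each such tensor has leading term a distinct weight-diagram basis element; thus the planar $\Gamma$-tensors form a basis of $W_{C_{\Gamma}, \vec{p}_{\Gamma}}(SL_2(\C))$ which is simultaneously adapted to $F_{\Gamma}$ and to the subspace $I_S$. Consequently the sequence $0 \to I_S \to W_{C_{\Gamma}, \vec{p}_{\Gamma}}(SL_2(\C)) \to W_{C_{\Gamma}, \vec{p}_{\Gamma}}(SL_2(\C))/I_S \to 0$ is strict for $F_{\Gamma}$, so passing to associated graded is exact and $\gr_{F_{\Gamma}}(I_S)$ is spanned precisely by the weight diagrams of the planar $\Gamma$-tensors lying in $I_S$. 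Comparing the explicit bases — the planar $\Gamma$-tensors with $< L$ paths through some vertex of $S$ on the one side, and the weight diagrams with $< L$ paths through some vertex of $S$ on the other — gives $\gr_{F_{\Gamma}}(I_S) = J_S$ on the nose, which is exactly the parallel description of $I_S$ and $J_S$ recorded before the statement.

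This yields $\gr_{F_{\Gamma}}(W_{C_{\Gamma}, \vec{p}_{\Gamma}}(SL_2(\C))/I_S) = \gr_{F_{\Gamma}}(W_{C_{\Gamma}, \vec{p}_{\Gamma}}(SL_2(\C)))/\gr_{F_{\Gamma}}(I_S) = \C[H_{\Gamma}^*]/J_S$. Forming the Rees algebra of $W_{C_{\Gamma}, \vec{p}_{\Gamma}}(SL_2(\C))/I_S$ with respect to this filtration then produces a $\C[t]$-flat algebra with fiber $W_{C_{\Gamma}, \vec{p}_{\Gamma}}(SL_2(\C))/I_S$ away from $t = 0$ and fiber $\C[H_{\Gamma}^*]/J_S$ at $t = 0$; taking $\Proj$ over $\mathbb{A}^1$ gives the desired degeneration of $D_S$ to $K_S$. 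The case $S = V(\Gamma)$ recovers the isomorphism $D_{V(\Gamma)} \cong K_{V(\Gamma)}$ of the previous proposition, where the degeneration is trivial.

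The main obstacle is the strictness step, that is, establishing $\gr_{F_{\Gamma}}(I_S) = J_S$ rather than merely an inclusion. The risk is that leading terms of elements of $I_S$ could in principle combine to produce weight diagrams outside $J_S$. This is exactly what is prevented by the fact that the planar $\Gamma$-tensors form a basis adapted to both the filtration and the ideal: their leading terms are distinct basis weight diagrams, so no cancellation or unexpected leading term can occur. I would therefore isolate and prove this adapted-basis property carefully — it is essentially the content of the planarity of $I_S$ together with Proposition \ref{planarassocgraded} — after which the flatness comes for free from the appendix.
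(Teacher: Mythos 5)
Your proposal is correct and follows essentially the same route as the paper: both filter the coordinate ring of $D_S$ by weight diagrams and identify the associated graded with $\C[H_{\Gamma}^*]/J_S$, the paper phrasing this as the survival of the leading weight diagram under the truncation defining multiplication in $\C[D_S]$, and you phrasing it as strictness of the filtration on the sequence $0 \to I_S \to W_{C_{\Gamma}, \vec{p}_{\Gamma}}(SL_2(\C)) \to \C[D_S] \to 0$ via the adapted planar basis. Your version supplies the details the paper leaves implicit, so it is a faithful (and somewhat more careful) rendering of the same argument.
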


\begin{proof}
The algebras $\C[D_S]$ and $\C[K_S]$ have matching planar bases.  Multiplication in  $\C[D_S]$  is computed by expanding the product in $W_{C_{\Gamma}, \vec{p}_{\Gamma}}(SL_2(\C))$, and cutting off terms with $<L$ paths through some vertex in $S$, a process which does not eliminate the highest term.  The associated graded algebra of $\C[D_S]$ by the filtration defined by any $x \in \mathcal{C}_{\Gamma}$ with $x(e) \neq 0$ for all $e \in E(\Gamma)$ is therefore $\C[K_S]$.
\end{proof}

\begin{corollary}
The scheme $K_{V(\Gamma)}$ is isomorphic to $D_{V(\Gamma)}$
\end{corollary}

\begin{proof}
The ideal $I_{V(\Gamma)}$ has a basis of those planar $\Gamma$ tensors which are not maximal level for some $v \in V(\Gamma)$.  The coordinate ring $\C[D_{V(\Gamma)}] = W_{C_{\Gamma}, \vec{p}_{\Gamma}}(SL_2(\C))/I_{V(\Gamma)}$ is therefore spanned by planar $\Gamma$-tensors which are maximal everywhere in $\Gamma.$  When the product $V(P_w, \phi_w, A_w)V(P_{w'}, \phi_{w'}, A_{w'})$ is expanded into $V(P_{w + w'}, \phi_{w + w'}, A_{w + w'})$ plus lower planar $\Gamma-$tensors, all of the lower summands are cut off. This is precisely the affine semigroup multiplication in $\C[H_{\Gamma}^*]/J_{V(\Gamma)}$. 
\end{proof}

\subsection{The stratification of $K_{C_{\Gamma}, \vec{p}_{\Gamma}}(SL_2(\C))$}

We construct the strata of the scheme $K_{C_{\Gamma}, \vec{p}_{\Gamma}}(SL_2(\C))$ by taking $U^n$ quotients of each of the $D_S.$ The scheme $E_S = D_S/U^n$ is cut out of $K_{C_{\Gamma}, \vec{p}_{\Gamma}}(SL_2(\C))$ by the ideal $I_S^{U^n} = I_S \cap V_{C_{\Gamma}, \vec{p}_{\Gamma}}(SL_2(\C))$.  This is the planar ideal in $V_{C_{\Gamma}, \vec{p}_{\Gamma}}(SL_2(\C))$ with a basis given by those planar $\Gamma-$tensors in $I_S$ which have only [UP] orientations at their leaves.  In particular, the stratification poset of $K_{C_{\Gamma}, \vec{p}_{\Gamma}}(SL_2(\C))$ by the $E_S$ is isomorphic to the stratification poset defined by the $D_S.$

We address when $I_S = I_T$ for distinct subsets $S, T \subset V(\Gamma).$ To simplify our arguments, we treat the case of $K_{C_{\Gamma}, \vec{p}_{\Gamma}}(SL_2(\C))$ and the semigroup $U_{\Gamma}^*$, however everything we say transfers immediately to $B_{C_{\Gamma}, \vec{p}_{\Gamma}}(SL_2(\C))$ and the semigroup $H_{\Gamma}^*.$

\begin{lemma}
The stratification poset of $K_{C_{\Gamma}, \vec{p}_{\Gamma}}(SL_2(\C))$ defined by the $E_S$ is isomorphic to the stratification poset of $Proj(\C[U_{\Gamma}^*])$ by the components defined by the corresponding toric ideals.  
\end{lemma}

\begin{proof}
This follows from the planarity of the ideals $I_S^{U_-^n}$ and Proposition \ref{toricdegenstrata}. 
\end{proof}

\begin{corollary}\label{strat}
For $S \subset V(\Gamma),$ if there is a spin diagram $a \in U_{\Gamma}(L)$ with the property that the sum $a(e) + a(f) + a(g)$ for the edges $e, f, g$ around a vertex $v \in V(\Gamma)$ is $2L$ precisely when $v \in S,$ then $I_S$ is distinct from all $I_T$ with $S \subset T.$
\end{corollary}

We pass to the convex cone $\mathcal{U}_{\Gamma}^*$ in $\R^{E(\Gamma)}\times \R$, and use the fact that  if a rational point can be found which satisfies the requirements of Corollary \ref{strat}, it can be sufficiently multiplied to procure a spin diagram.  

\begin{proposition}\label{leaftop}
If $\Gamma$ has a leaf, then for any $S \subset V(\Gamma)$, there is a point in $\mathcal{U}_{\Gamma}^*$ which satisfies
the condition of Corollary \ref{strat}. 
\end{proposition}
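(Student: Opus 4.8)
The plan is to work entirely inside the real cone supporting $U_\Gamma^*$ in $\R^{E(\Gamma)} \times \R$, as the discussion preceding the statement allows, and to reduce the existence of the required weight diagram to a single linear-algebra fact about the incidence geometry of $\Gamma$. First I would record the reformulation: a point of the cone is a non-negative function $w$ on $E(\Gamma)$ together with a level $L$, subject to the triangle inequalities at every trivalent vertex and the bound $w(e)+w(f)+w(g) \le 2L$ there; since all leaf orientations are $[UP]$ in $U_\Gamma^*$, the leaf edges impose no further constraint. The condition of Lemma \ref{strat} then asks for such a point with $w(e)+w(f)+w(g)=2L$ exactly at the vertices of $S$, and strictly less than $2L$ off $S$.

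Next I would perturb off the uniform saturated weighting. Writing $w(e) = M + s_e$ for a large constant $M$ and setting $2L = 3M$, the sum around a vertex $v$ becomes $2L + (As)_v$, where $A \colon \R^{E(\Gamma)} \to \R^{V(\Gamma)}$ is the unsigned incidence map $(As)_v = \sum_{e \ni v} s_e$ (the internal endpoint of a leaf edge contributing once, a loop at $v$ contributing $2 s_e$). Thus the hypotheses of Lemma \ref{strat} hold as soon as I can choose $s$ with $(As)_v = 0$ for $v \in S$ and $(As)_v < 0$ for $v \notin S$, for instance $(As)_v = -1$ off $S$. For $M$ large the resulting weights $w(e) = M + s_e$ are positive and the triangle inequalities $M + s_e \le 2M + s_f + s_g$ hold automatically, so these positivity checks are routine and not the difficulty.

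The crux is therefore surjectivity of $A$, equivalently injectivity of $A^{T} \colon \R^{V(\Gamma)} \to \R^{E(\Gamma)}$, and this is exactly where the leaf enters. An element $y \in \ker A^{T}$ satisfies $y_u + y_v = 0$ across every internal edge $(u,v)$ and, crucially, $y_{v_0} = 0$ at the internal endpoint $v_0$ of the leaf edge, since that edge contributes only the single coordinate $y_{v_0}$. Propagating $y_u = -y_v$ along edges out of $v_0$ and using that $\Gamma$ is connected forces $y \equiv 0$; hence $\ker A^{T} = 0$, $A$ is onto, and the desired $s$ (and then the desired cone point) exists. I would close by invoking the rationality observation to clear denominators and, to meet the evenness requirement in the definition of $H_\Gamma$, scaling the whole point by $2$, which preserves the saturation pattern while making every vertex sum even. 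The main obstacle is isolating this kernel computation: without a leaf, $\ker A^{T}$ is nonzero exactly when $\Gamma$ is bipartite, spanned by the $\pm 1$ two-coloring, which is precisely the source of the identifications in part (4) of Theorem \ref{strattheorem}; the leaf pins the two-coloring to zero and removes the obstruction.
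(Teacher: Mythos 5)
Your proof is correct, but it reaches the conclusion by a different mechanism than the paper. The paper's argument is constructive: it starts from the uniform level-$3$ weighting assigning $2$ to every edge, and for each vertex $v \notin S$ it pushes an alternating perturbation $-\epsilon, +\epsilon, -\epsilon, \ldots$ along a simple path from $v$ to a leaf; because the signs alternate, the vertex sums are unchanged everywhere except at $v$, where the sum drops by $\epsilon$. That alternating path is precisely an explicit preimage under your unsigned incidence map $A$ of $-\epsilon\,\delta_v$, so the two arguments are dual to one another: you establish surjectivity of $A$ abstractly by showing $\ker A^{T} = 0$ (the leaf edge pins $y_{v_0}=0$ and connectivity propagates the vanishing), while the paper exhibits enough vectors in the image by hand. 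What your route buys is a uniform explanation of parts (3) and (4) of Theorem \ref{strattheorem}: $\ker A^{T}$ is nonzero exactly for leafless bipartite $\Gamma$, where it is spanned by the $\pm 1$ two-coloring, and this is the same obstruction the paper later isolates in Proposition \ref{g0top} and in the final bipartite proposition by a separate sink-and-orientation computation on a spanning tree. What the paper's route buys is that the perturbing vectors are explicit and manifestly small, so positivity, the triangle inequalities, and membership in the cone are visible without introducing a large constant $M$; your version needs, and correctly supplies, the ``take $M$ large, choose $s$ rational, clear denominators, and double for parity'' cleanup, which is consistent with the paper's remark that a rational point of the supporting cone can be scaled to an honest weight diagram.
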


\begin{proof}
We let $w \in \mathcal{U}_{\Gamma}^*$ be the weighting of level $3$ which assigns each edge $2.$ Each vertex 
has the sum $w(e) + w(f) + w(g) = 6$, we show that the value of this sum can be lowered for $v$ in any given set of vertices. 
Fix $v \in V(\Gamma)$, choose a simple path $e_1, \ldots, e_k$ from an edge $e_1$ which borders $v$ to a leaf $e_k = \ell.$  
We obtain a new weighting from $w$, by changing the weight along this path with some rational, sufficiently small $\epsilon.$

$$
w'(e_1) = w(e_1) - \epsilon, w'(e_2) = w(e_2) + \epsilon, \ldots, w'(e_k) = w(e_k) \pm \epsilon
$$

This does not change the total sum around any vertex, except $v$, which has its total sum lowered by $\epsilon.$
We can inductively repeat this procedure for all vertices in $V(\Gamma) \setminus S$ to produce the required weighting. 
\end{proof}

We can adapt the proof of Proposition \ref{leaftop} to prove the following for graphs without leaves. 

\begin{proposition}\label{g0top}
If $S \subset V(\Gamma)$ has a pair of points connected by a simple path of odd length, then there is an element in $U_{\Gamma}^*$ which satisfies
the condition of Corollary \ref{strat}.  If $\Gamma$ has an odd length simple cycle, then for all $S \subset V(\Gamma)$, there is a point in $\mathcal{U}_{\Gamma}^*$ which satisfies the condition of Corollary \ref{strat}. 
\end{proposition}

\begin{proof}
The idea is the same as the proof of Proposition \ref{leaftop}, we can construct paths from any vertex to the odd length cycle, or a node which
has already been "lowered".  We then observe that if $v, w \in V(\Gamma)$ are seperated by a path of odd length, their total sums can be simultaneously lowered by shrinking the weight on the first, respectively last edges in such a path by a common $\epsilon,$ and placing alternating $\pm \epsilon$ on the edges in between.  
\end{proof}

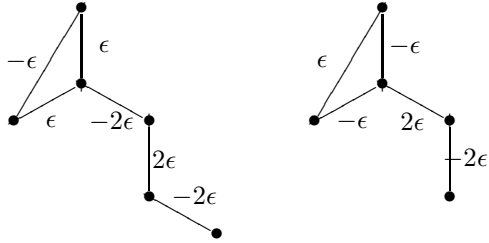
\begin{figure}[htbp]

$$
\begin{xy}
(0, 0)*{\bullet} = "A1";
(0, 10)*{\bullet} = "A2";
(9, -5)*{\bullet} = "A3";
(-9, 15)*{\bullet} = "A4";
(-9, 25)*{\bullet} = "A5";
(-18, 10)*{\bullet} = "A6";
(6, 0)*{-2\epsilon};
(2, 5)*{2\epsilon};
(-5, 10)*{-2\epsilon};
(-13, 10)*{\epsilon};
(-17, 18)*{-\epsilon};
(-6, 20)*{\epsilon};
"A2"; "A1";**\dir{-}? >* \dir{-};
"A1"; "A3";**\dir{-}? >* \dir{-};
"A4"; "A2";**\dir{-}? >* \dir{-};
"A4"; "A5";**\dir{-}? >* \dir{-};
"A6"; "A4";**\dir{-}? >* \dir{-};
"A5"; "A6";**\dir{-}? >* \dir{-};
(40, 0)*{\bullet} = "B1";
(40, 10)*{\bullet} = "B2";
%(49, -5)*{\bullet} = "B3";
(31, 15)*{\bullet} = "B4";
(31, 25)*{\bullet} = "B5";
(22, 10)*{\bullet} = "B6";
%(46, 0)*{\epsilon};
(42, 5)*{-2\epsilon};
(35, 10)*{2\epsilon};
(27, 10)*{-\epsilon};
(23, 18)*{\epsilon};
(34, 20)*{-\epsilon};
"B2"; "B1";**\dir{-}? >* \dir{-};
%"B1"; "B3";**\dir{-}? >* \dir{>};
"B4"; "B2";**\dir{-}? >* \dir{-};
"B4"; "B5";**\dir{-}? >* \dir{-};
"B6"; "B4";**\dir{-}? >* \dir{-};
"B5"; "B6";**\dir{-}? >* \dir{-};
\end{xy}
$$\\
\caption{Changing the weighting along a path to an odd cycle.}
\label{Gammacompute}
\end{figure}

  If $\Gamma$  has only even length simple cycles, $E_S$ is distinct from each $E_T$ for $S \subset T$, if $V(\Gamma) \setminus S$ contains a pair of points an odd distance apart.  Notice that this condition is heritable, if $S \subset T$ and $S$ has this property, then so does $T$.  Furthemore if $S$ does not have this property, then neither does $R \subset S$.  

\begin{proposition}\label{bippos}
If $\Gamma$ is bipartite with no leaves, then the lattice formed by the $D_S$ is the quotient of the Boolean lattice on $V(\Gamma)$ 
by the ideal of those $S \subset V(\Gamma)$ such that $V(\Gamma) \setminus S$ is contained in one of the sets in the partition $B_1 \cup B_2 = V(\Gamma)$ defined by the bipartite structure. 
\end{proposition}

\begin{proof}
We have already shown that if $V(\Gamma)\setminus S$ has an odd length path, we can distinguish $E_S$ from other strata.  This is not the case if and only if $V(\Gamma) \setminus S$ is contained in a set of the partition, say $B_1$.  The graph $\Gamma$ is regular, therefore $|B_1| = |B_2|$.  If we assign weights to $E(\Gamma)$, The total sums of the weights around the vertices in $B_1$ equals this same sum for $B_2$.  It follows that if $w \in \mathcal{U}_{\Gamma}(L)$ assigns all the vertices of $B_2$ weight $2L$, the same must be the case for all vertices in $B_1$.  
\end{proof}

It remains to determine the dimensions of each of the $E_S,$  we do this by calculating the dimensions of the corresponding
faces of the polytope $\mathcal{U}_{\Gamma}(L).$  First we describe $\mathcal{U}_{\Gamma}(L)$ in a more convenient way.  We choose a set of $g$ edges $\{e_1, \ldots, e_g\}$, which give a tree $\tree$ when they are split $\{e_1, e_1', \ldots, e_g, e_g'\}$, this defines a covering $\pi: \tree \to \Gamma.$  We Choose a leaf $\ell_n$ of $\Gamma$, and set this to be a sink in $\tree$ by placing a direction on all edges in $\tree$ toward $\ell_n.$ 

\begin{figure}[htbp]

$$
\begin{xy}
(0, 0)*{\bullet} = "A1";
(0, 10)*{\bullet} = "A2";
(9, 15)*{\bullet} = "A3";
(-9, 15)*{\bullet} = "A4";
(-9, 25)*{\bullet} = "A5";
(-18, 10)*{\bullet} = "A6";
(3, 5)*{\ell_2};
(5, 17)*{\ell_1};
(-6, 22)*{e_1};
(-15, 15)*{e_1^*};
"A1"; "A2";**\dir{-}? >* \dir{>};
"A2"; "A3";**\dir{-}? >* \dir{>};
"A2"; "A4";**\dir{-}? >* \dir{>};
"A4"; "A5";**\dir{-}? >* \dir{>};
"A4"; "A6";**\dir{-}? >* \dir{>};
\end{xy}
$$\\
\caption{Introducing a direction on $\tree.$}
\label{Gammacompute2}
\end{figure}

We introduce slack variables, $L_v$ , $v \in V(\Gamma)$ at the vertices of $\Gamma,$ and we identify $\mathcal{U}_{\Gamma}(L)$ with the labellings $w$ of $E(\tree)$ which satisfy the triangle inequalities at each trinode, $w(e_i) = w(e_i')$, and $w(e_v) + w(f_v) + w(h_v) = L_v$, $L_v \leq L.$  The value of a $w$ on $\tree$ is determined by the $L_v$ and the edges $e_1, \ldots, e_g, \ell_1, \ldots, \ell_{n-1}.$  The dimension of $\mathcal{U}_{\Gamma}(L)$
is $E(\Gamma)$, so no linear relations hold among these parameters.     Forgetting for a moment that $w(e_i) = w(e_i'),$ the value of $w$ on $\ell_n$ is a linear combination of the $L_v, w(e_i), w(e_i'), w(\ell_i)$, with all coefficients equal to $\pm 1,$   with the following signs.

\begin{enumerate}
\item The coefficient of the $w(e_i), w(e_i'), w(\ell_k)$ is equal to $(-1)^{d+1},$ where $d$ is the length of the directed path containing the leaf and $\ell_n.$\\
\item The coefficient of $L_v$ is equal to $(-1)^{d+1}$, where $d$ is the length of the directed path which starts at $v$ and includes $\ell_n.$\\
\end{enumerate}

If we pass to the face corresponding to $E_{V(\Gamma)}$, we set all $L_v$ equal to $L$. The result is freely determined by $n + g -1 = |E(\Gamma)| - |V(\Gamma)|$ parameters.  This allows us to prove the following result on $E_{\Gamma}.$

\begin{proposition}\label{codim}
If $\Gamma$ has a leaf, for any $S \subset V(\Gamma)$, the codimension of $E_S$ is $|S|$
\end{proposition}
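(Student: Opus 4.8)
The strategy is to reduce the codimension count to a dimension computation for a single face of the rational cone supporting $U_{\Gamma}^*$, exploiting the toric degeneration. Since $E_S = D_S/U^n$, and the degeneration of $D_S$ to the toric scheme $K_S$ is compatible with the $U^n$-reduction, $E_S$ flatly degenerates to the toric stratum of $\operatorname{Proj}(\C[U_{\Gamma}^*])$ associated to the face $F_S \subset U_{\Gamma}^*$ consisting of those weightings whose vertex-sum $w(e)+w(f)+w(g)$ attains its maximal value $2L$ at precisely the vertices $v \in S$. As flat degeneration preserves dimension, it suffices to compute $\dim F_S$.

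First I would use the parametrization of $U_{\Gamma}^*$ set up above. After choosing the spanning tree $\tree$, splitting the $g$ extra edges, and introducing the slack variables $L_v$ at each vertex, a point of $U_{\Gamma}^*$ is determined freely by the $L_v$ together with the edge parameters $w(e_1),\dots,w(e_g),w(\ell_1),\dots,w(\ell_{n-1})$, subject only to the matching $w(e_i)=w(e_i')$. In these coordinates the condition that the vertex-sum at $v$ equals $2L$ is precisely $L_v = L$. Exactly as in the computation of the bottom stratum $F_{V(\Gamma)}$, where one sets every $L_v=L$ and finds $g+n-1=|E(\Gamma)|-|V(\Gamma)|$ free parameters, imposing $L_v=L$ only for $v\in S$ cuts down by $|S|$ independent linear conditions among the slack coordinates, so $\dim F_S \ge \dim U_{\Gamma}^* - |S|$; moreover each $L_v=L$ is a supporting hyperplane (since $L_v\le L$ on $U_{\Gamma}^*$), whence $\dim F_S \le \dim U_{\Gamma}^* - |S|$ as well, provided no further constraints are forced to become active.

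The crux, and the step I expect to be the main obstacle, is to rule out such a forced collapse, i.e. to show that the relative interior of $F_S$ is genuinely nonempty: there must exist a weighting with $L_v=L$ exactly on $S$, with $L_v<L$ for $v\notin S$ and all triangle inequalities strict. This is exactly the content of Proposition \ref{leaftop}, and it is here that the leaf hypothesis is indispensable: beginning from the level-$3$ weighting $w_{\Gamma}$ and pushing slack down simple paths terminating at a leaf lowers the vertex-sum at any prescribed vertex without disturbing the others, producing the required relative-interior point. This point lies in $F_S$ and in no face $F_T$ with $T\supsetneq S$, forcing $\dim F_S = \dim U_{\Gamma}^* - |S|$ on the nose.

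Combining the two steps, $\operatorname{codim} E_S = \operatorname{codim} F_S = |S|$, which is the assertion. I would close by remarking that the leaf hypothesis cannot be dropped: without a terminating leaf the lowering paths of Proposition \ref{leaftop} need not exist, certain faces $F_S$ become empty, and the vertex-sum constraints force one another, which is precisely the collapse of the Boolean lattice recorded in part (4) of Theorem \ref{strattheorem}.
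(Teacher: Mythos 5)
Your proof is correct, and it reaches the conclusion by a genuinely different route than the paper. The paper's proof is a global sandwich argument: Proposition \ref{leaftop} shows all the strata $E_S$ are distinct, so each containment $E_T \subsetneq E_S$ for $S \subsetneq T$ drops dimension by at least one and hence $\mathrm{codim}\, E_S \geq |S|$ for every $S$; the paper then performs the explicit parameter count \emph{only} for the single bottom face, obtaining $\mathrm{codim}\, E_{V(\Gamma)} \leq |V(\Gamma)|$, and observes that any $S$ with $\mathrm{codim}\, E_S > |S|$ would propagate down the chain to force $\mathrm{codim}\, E_{V(\Gamma)} > |V(\Gamma)|$, a contradiction. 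You instead compute the codimension of each face $F_S$ directly and locally. Both arguments run through the toric degeneration and both use Proposition \ref{leaftop} as the essential input; yours uses it in a stronger way (to produce a relative interior point of each $F_S$), while the paper only needs the weaker consequence that the strata are pairwise distinct. The trade-off is that your version is self-contained at each stratum but demands more care, and the paper's needs only one dimension count but leans on the poset structure.

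One step of yours deserves sharpening. The sentence asserting $\dim F_S \geq \dim U_{\Gamma}^* - |S|$ because one has ``imposed $|S|$ independent linear conditions'' is not valid for faces of cones as stated: intersecting a polyhedral cone with $k$ supporting hyperplanes can cut the dimension by more than $k$, and this is precisely what happens in the bipartite leafless case of Theorem \ref{strattheorem}(4). The lower bound is a consequence not of the linear-algebra count but of the existence of a point of $U_{\Gamma}^*$ at which the constraints $L_v = L$, $v \in S$, are the \emph{only} active inequalities --- in particular you should also note that the triangle inequalities are strict at the point produced by Proposition \ref{leaftop}, which they are, since the construction starts from $w_{\Gamma}$ (every edge weighted $2$ at level $3$, so each triangle inequality reads $2 \leq 4$) and perturbs by sufficiently small $\epsilon$. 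You correctly identify the relative-interior point as the crux and correctly attribute it to Proposition \ref{leaftop}, so the proof stands once that sentence is read as a statement about the local structure of the cone at such a point rather than as a general fact about intersecting with hyperplanes. Your closing remark about the failure of the leaf hypothesis matches the paper's Theorem \ref{strattheorem}(4) exactly.
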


\begin{proof}
This follows from the fact that each $E_S$ is distinct, and $codim(E_{V(\Gamma)}) = |V(\Gamma)|.$
\end{proof}

If $\Gamma$ has no leaves we can repeat the above construction for $w \in \mathcal{U}_{\Gamma}(L)$, expressing $w(e_1)$ as a linear combination of $w(e_i), w(e_i^*), w(e_1^*)$ and the $L_v.$  If the edge which gives $e_1, e_1^*$ lies in a simple cycle of odd length, then the coefficient of $w(e_1^*)$  is $-1$.  It follows that $w(e_1), w(e_1^*), \ldots, w(e_g), w(e_g^*)$ satisfy a linear equation when the $L_v$ are set to $L$. As a consequence $codim(E_{V(\Gamma)}) = |V(\Gamma)|$ in this case.  If $\Gamma$ has no leaves and is bipartite, the coefficient of $w(e_i)$ in the expansion of $w(e_1)$ is opposite of that of $w(e_i^*)$, and the coefficient of $w(e_1)$ is $1.$  It follows that when all $L_v$ are set to $L$, the remaining parameters satisfy no linear equations.  The following finishes the proof of Theorem \ref{strattheorem}. 

\begin{proposition}
If $\Gamma$ has no leaves, but has a simple cycle of odd length, $E_S$ for any $S \subset V(\Gamma)$ has codimension
$|S|.$ If $\Gamma$ is bipartite and has no leaves, $E_S$ has codimension $|S|$ if $V(\Gamma) \setminus S$ is not contained
in partition set $B_i$, and $2g-3$ otherwise. 
\end{proposition} 

\begin{proof}
By Propositions \ref{g0top} and \ref{bippos} and the argument above, it remains to observe that there is a
complete flag of subsets of $V(\Gamma)$, such that no subset of size $\geq 2$ is contained in a partition set $B_i$.  
\end{proof}

\begin{figure}[htbp]
\centering
\includegraphics[scale = 0.5]{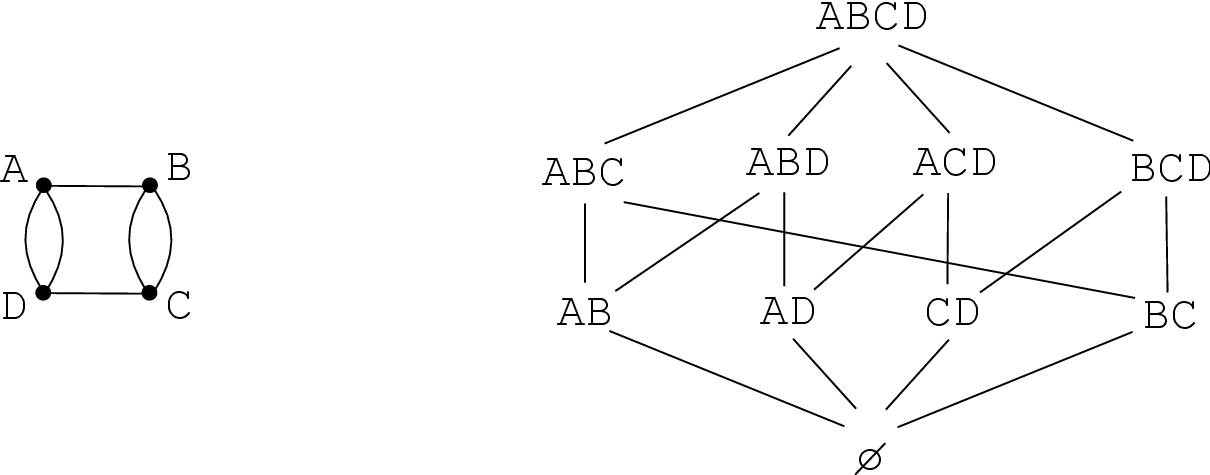}
\caption{A non-Boolean stratification poset.}
\label{Lattice}
\end{figure}

\newpage

\bigskip
\noindent
Christopher Manon:\\
Department of Mathematics,\\ 
George Mason University\\ 
Fairfax, VA 22030 USA 


\begin{thebibliography}{10}\label{biblio}

\normalsize

%\bibitem[4ti2]{4ti2}
%4ti2 team, \emph{4ti2---A software package for algebraic, geometric and combinatorial problems on linear spaces}, Available at www.4ti2.de

\bibitem[A]{A}
T. Abe, \emph{Projective normality of the moduli space of rank 2 vector bundles on a generic curve}, Transactions AMS, (2010), 362, 477-490.

%\bibitem[AW]{AW}
%S. Agnihotri and C. T. Woodward. \emph{Eigenvalues of products of unitary matrices and quantum Schubert calculus.} Math. Res. Lett., 5 (1998), 817–836.

\bibitem[AB]{AB}
V. \ Alexeev and M. \ Brion, \emph{Toric degenerations of spherical varieties},
Selecta Mathematica, (2005),  10, 4,  453-478.

%\bibitem[AGS]{AGS}
%V. \ Alexeev, A. \ Gibney, and D.\ Swinarski, \emph{Conformal blocks divisors on $\bar{\mathcal{M}}_{0,n}$ from $sl2(\C)$},
%http://arxiv.org/abs/1011.6659.

%\bibitem[AGSS]{AGSS}
%M. Arap, A. Gibney, J. Stankewicz, D. Swinarski, \emph{$sl_n$ level 1 conformal blocks divisors on $\bar{M}_{0,n}$}, International Mathematics Research Notices, 2011.

%\bibitem[AMM]{AMM}
%A. Alekseev, A. Malkin, E. Meinrenken, Lie group valued moment maps. J. Diff. Geom. 48 (1998) 445-495.

%\bibitem[AtB]{AtB}
%Atiyah M. F. and Bott R., \emph{The Yang-Mills equations over Riemann surfaces}
%Phil. Trans. R. Soc. London, 308 (1982), 523–615.

\bibitem[Ba]{Ba}
J. Baez, \emph{An Introduction to Spin Foam Models of BF Theory and Quantum Gravity}, Lect. Notes Phys. (2000), 543, 25-94. 

\bibitem[B1]{B1}
A. Beauville, \emph{Conformal Blocks, Fusion Rules, and the Verlinde formula},
Proceedings of the Hirzebruch 65 Conference on Algebraic Geometry, Israel Math. Conf. Proc., {\bf 9}, Bar-Ilan Univ., Ramat Gan  (1996), 75–96.

%\bibitem[B2]{B2}
%A. Beauville, \emph{Fibr´es de rang 2 sur une courbe, fibr´e d´eterminant et fonctions thˆeta}, Bull. Soc. Math. France 116 (1988), no. 4, 431–448.

%\bibitem[BPR]{BPR}
%M. Baker, S. Payne, and J. Rabinoff, \emph{Non-Archimedean geometry, tropicalization, and metrics on curves}, arXiv:1104.0320

%\bibitem[Ba]{Ba} 
%S. Bauer,  \emph{Parabolic bundles, elliptic surfaces and SU(2)-representation
%spaces of genus zero Fuchsian groups}, Math. Ann. 290 (1991) 509–526.

%\bibitem[Be]{Be}
%A. Beauville, \emph{Conformal Blocks, Fusion Rules, and the Verlinde formula}
%Proceedings of the Hirzebruch 65
%Conference on Algebraic Geometry, Israel Math. Conf. Proc., 9, Bar-Ilan Univ., Ramat Gan (1996), 75–96.

\bibitem[BL]{BL}
A. Beauville, Y. Lazlo, \emph{Conformal Blocks and Generalized Theta Functions},
Math. Phys. 164 (1994) 385-419.

%\bibitem[BKMW]{BWMW}
%L. Begin, Anatol N. Kirillov, P. Mathieu, M. A. Walton, \emph{Berenstein-Zelevinsky triangles, elementary couplings and fusion rules}, Lett.Math.Phys. 28 (1993) 257-268.

%\bibitem[Bel2]{Bel2}
%P. Belkale. \emph{Strange duality and the Hitchen/WZW connection}, J. Differential Geom. 82, 2, (2009), 445-465.

%\bibitem[BS]{BS}
% A. Berenstein, R. Sjamaar, \emph{Coadjoint orbits, moment polytopes, and the Hilbert-Mumford criterion}, J. Amer. Math. Soc. 13 (2000), no. 2, 433–466

%\bibitem[BZ1]{BZ1}
%A. \ Berenstein and A. \i Zelevinsky, \emph{Tensor product multiplicities, canonical bases and totally positive varieties}, Invent. Math, (2001), 143, 77--128.

%\bibitem[BZ2]{BZ2}
%A. \ Berenstein and A. \ Zelevinsky, \emph{Canonical bases for the quantum group of type $A_r$ and piecewise-linear combinatorics}, Duke Math. J. 82 (1996), 473.  


\bibitem[BBH]{BBH}
U. Bhosle, I. Biswas, J. Hurtubise, \emph{Grassmannian framed bundles and generalized parabolic structures}, 
arXiv:1202.4239 [math.AG]



\bibitem[Bh]{Bh}
 U. Bhosle, \emph{Parabolic vector bundles on curves}, Ark. Mat. (1989), 27, 15-22.


%\bibitem[BH]{BH}
%H. U. Boden, Y. Hu, \emph{Variations of moduli of parabolic bundles}, Math. Ann. 301 (1995), no. 3, 539-559. 

%\bibitem[Br]{Br}
%M. Brion,\emph{The total coordinate ring of a wonderful variety}, J. Algebra 313 (2007) 61, 99.

%\bibitem[BL]{BL}
%M. Brion, V. Lakshmibai, \emph{A geometric approach to standard monomial theory}, Rep. Theory, 7, 651-680. 

%\bibitem[BKT]{BKT}
%A. S. Buch and A. Kresch, H. Tamvakis, \emph{Gromov-Witten invariants on Grassmannians} J. Amer. Math. Soc. 16 (2003), 901-915.

\bibitem[Bu]{Bu}
W. Buczy\'nska, \emph{Phylogenetic toric varieties on graphs}, J. Algebraic Comb. (2012), 35, 3, 421-460.

\bibitem[BBKM]{BBKM}
W.\ Buczy\'nska, J.\ Buczy\'nski, K.\ Kubjas, and M.\ Michalek, \emph{Degrees of generators of phylogenetic semigroups on graphs}, arXiv:1105.5382v1 [math.CO].

\bibitem[BW]{BW} W. \ Buczy\'nska and J. \ Wie\'sniewski, \emph{On the geometry of binary symmetric models of phylogenetic trees}, JEMS. (2007), 9, 609-635.

%\bibitem[Cal]{Cal}
%P. Caldero, \emph{Toric degenerations of Schubert varieties}, Transform. Groups 7 (2002),no. 1, 51–60.

%\bibitem[CdS]{CdS}
%A. Cannas da Silva, \emph{Lectures on Symplectic Geometry}, Lecture Notes in Mathematics 1764, Springer-Verlag 2001.

%\bibitem[CDS]{CDS}
%J. Cantarella, T. Deguchi, C. Shonkwiler, \emph{Probability Theory of Random Polygons from the Quaternionic Viewpoint}, arXiv:1206.3161 [math.DG].

%\bibitem[CD]{CD}
%M. Carr,  S. Devadoss, \emph{Coxeter complexes and graph associahedra}, Topology and its Applications 153
%(2006) 2155-2168.

%\bibitem[CP]{CP}
%V. Chari, A. Pressley, \emph{A Guide to Quantum Groups}, Cambridge University Press, Jul 27, (1995).

%\bibitem[C]{C}
%F. \ Cools, \emph{On the relation between weighted trees and tropical Grassmannians},
%Journal of Symbolic Computation Volume 44 ,  Issue 8  August (2009), 1079-1086.  

\bibitem[DM]{DM}
P. Deligne, D. Mumford, \emph{The irreducibility of the space of curves of given genus}, Publications Mathématiques de l'IHÉS, (1969), 36, 75-109. 

%\bibitem[Dol]{Dol}
%I. Dolgachev, \emph{Lectures on Invariant Theory}, LMS Lecture Note Series 296, Cambridge University Press, Cambridge, 2003. 

%\bibitem[E]{E}
%D. \ Eisenbud \emph{Commutative Algebra With A View Toward Algebraic Geometry}, Graduate Texts in Mathematics 150, Springer-Verlag, 1995. 

%\bibitem[ERSS]{ERSS}
%N. Eriksson, K. Ranestad, B. Sturmfels, and S. Sullivant. \emph{Phylogenetic algebraic ge-
%ometry.} Projective Varieties with Unexpected Properties, pages 237–255. Walter de
%Gruyter GmbH Co. KG, Berlin, 2005.

%\bibitem[Fak]{Fak}
%N.\ Fakhruddin, \emph{Chern classes of conformal blocks on $\bar{\mathcal{M}}_{0,n}$}, 2009. arXiv:0907.0924v2 [math.AG].

\bibitem[Fal]{Fal}
G. Faltings, \emph{A proof of the Verlinde formula}, J. Algebraic Geometry (1994), 3, 347-374.

\bibitem[F]{F}
C. Florentino, \emph{Schottky uniformization and vector bundles over Riemann surfaces}, Manuscripta Mathematica (2001), 105, 1, 69-83.

\bibitem[FL]{FL}
C. Florentino, S. Lawton, \emph{Character varieties and the moduli quiver representations}, to appear: Papers from 5th Ahlfors-Bers Colloquium, Rice University, Contemp. Math. AMS, 2013. 

\bibitem[FG]{FG}
V.V. Fock, A.B. Goncharov, \emph{Moduli spaces of local systems and higher Teichmuller theory}, Publications Mathématiques de l'IHÉS, (2006), 203, 1–211.

%\bibitem[FMS]{FMS}
%P. Francesco, P. Mathieu, D Senechal, \emph{Conformal Field Theory},
%Springer, Dec 13, (1996),  890.

%\bibitem[Fu]{Fu}
%W. \ Fulton, \emph{Introduction to Toric Varieties} Princeton University Press, Princeton, NJ, 1993.

\bibitem[FH]{FH}
W.\ Fulton, J.\ Harris, \emph{Representation Theory}, GTM, Vol. 129, Springer, Berlin, (1991).

%\bibitem[GG]{GG}
%A. Gibney, N. Giansiracusa, \emph{The cone of type A, level one conformal blocks divisors}, arXiv:1105.3139v2 [math.AG].

%\bibitem[GP]{GP}
%O.\ Gleizer and E. Postnikov, \emph{Littlewood-Richardson coefficients via Yang-Baxter equation}, Internat. Math. Res. Notices, (1999), 741--774.

\bibitem[Gi]{Gi}
D. Gieseker, \emph{A degeneration of the moduli space of stable bundles}, J. Differential Geom, (1984), 19, 1, 173-206.

\bibitem[G1]{G1}
 W. Goldman, \emph{Invariant functions on Lie groups and Hamiltonian flows of surface group representations}, Invent. Math. (1986), 85, 263-302.

\bibitem[G2]{G2}
 W. Goldman, \emph{The symplectic nature of fundamental groups of surfaces}, Advances in Math. (1980), 54, 200-225.

\bibitem[Gr]{Gr}
F.D. \ Grosshans, \emph{Algebraic homogeneous spaces and invariant theory}, Springer Lecture Notes, vol. 1673, Springer, Berlin, (1997).

%\bibitem[HH]{HH}
%M. Herring, B. Howard, \emph{The ring of evenly weighted points on the line},  arXiv:1211.3941 [math.AG].

%\bibitem[HMSV]{HMSV}
%B.\ J.\ Howard, J. J.\ Millson, A.\ Snowden, and R.\ Vakil, \emph{The projective invariants of ordered points on the line},
%Duke Math. J., 146 No. 2 (2009), 175-226.

%\bibitem[HMM]{HMM}
%C. Manon \emph{The toric geometry of triangulated polygons in euclidean space}, Canad. J. Math. 63 (2011), 878-937.

%\bibitem[HTW]{HTW}
%Roger E. Howe, Eng-Chye Tan, and Jeb F. Willenbring, 
%\emph{Stable branching rules for classical symmetric pairs}, Trans. Amer. Math. Soc. 357 (2005), 1601-1626. 

%\bibitem[Hu]{Hu}
%Y. Hu, \emph{Toric degenerations of GIT quotients, Chow quotients, and $\mathcal{M}_{0,n}$}, Asian J. Math. 12, 1, (2008), 47-54. 

%\bibitem[HK]{HK}
 %Y. Hu, S. Keel, Mori dream spaces and GIT. Michigan Math. J. 48, (2000), 331–348.

\bibitem[HJ]{HJ}
J. Hurtubise, L. Jeffrey, \emph{Representations with weighted frames and framed parabolic bundles}, Canad. J. Math. (2000), 52, 6, 1235-1268.

\bibitem[HJS]{HJS}
J. Hurtubise, L. Jeffrey and R. Sjamaar, \emph{ Moduli of framed parabolic sheaves}, Ann. Global Anal. Geom. 28 (2005) 351-370.


%\bibitem[Gi]{Gi}
%B. Iriarte Giraldo, \emph{Dissimilarity Vectors of Trees are Contained in the Tropical
%Grassmannian}, The Electronic Journal of Combinatorics 17, no 1, (2010).  

%\bibitem[Ji]{Ji}
%L. Ji, \emph{Buildings and their applications in geometry and topology}, Asian J. Math, Vo. 10, No. 1,  (2006), 011-080. 

%\bibitem[KLM]{KLM}
 %M. Kapovich, B. Leeb and J. Millson, \emph{Polygons in buildings and their refined side lengths}, Memoirs of AMS, Vol. 192, (2008). 

%\bibitem[KaMil]{KaMil}
%M. Kapovich, J. Millson, \emph{The symplectic geometry of polygons in Euclidean space}, J. Differential Geom. 44 (1996),
%no. 3, 479-513.

%\bibitem[Kas]{Kas}
%M. Kashiwara, \emph{The crystal base and Littelmann’s refined Demazure character formula}, Duke Math. J. 71 (1993), 839–858.

%\bibitem[Kav]{Kav}
%K.\ Kaveh, \emph{Crystal bases and Newton-Okounkov bodies}, arXiv:1101.1687v1 [math.AG].

%\bibitem[KH]{KH}
%K. Kaveh, M. Harada, \emph{Toric degenerations, integrable systems and Okounkov bodies}, arXiv:1205.5249.

\bibitem[JW]{JW}
L. Jeffrey, J. Weitsman, \emph{Bohr-Sommerfeld Orbits in the Moduli Space of Flat Connections and the Verlinde Dimension Formula}, Commun. Math. Phys.  (1992), 150, 593-630. 

%\bibitem[K]{K}
%Ivan Kausz. \emph{A Gieseker type degeneration of moduli stacks of vector bundles on curves.} Trans. Amer. Math. Soc. ,
%357, 12, (2005) 4897-4955.

\bibitem[KMSW]{KMSW}
A.N. \ Kirillov, P. \ Mathieu, D. \ Senechal, and M.A. \ Walton, \emph{Crystalizing the depth rule and WZNW fusion coefficients}, Proceedings of the XIXth International Colloquium on Group Theoretical Physics, Salamanca, Spain, (1992).

%\bibitem[KTW]{KTW}
%A. Knutson, T. Tao, C. Woodward  \emph{The honeycomb model of $GL_n(\C)$ tensor products II: Puzzles determine facets of the Littlewood-Richardson cone}, Journal of the AMS, 17 (2004), 19-48.

%\bibitem[Koi]{Koi} K. Koike, The projective embedding of the conguration space X(2; 8), Technical Reports of Mathematical Sciences,
%Chiba University, 16 (2000).

\bibitem[Kir]{Kir}
F. C. Kirwan, \emph{Symplectic implosion and nonreductive quotients}, Geometric Aspects of Analysis and Mechanics,
Progress in Mathematics, (2011), 292, 213-256.

\bibitem[KM]{KM}
K. Kubjas, C. Manon, \emph{Conformal blocks, Berenstein-Zelevinksy triangles, and group-based models}, J. Algebraic Comb. (2014), 40, 3, 861-886.

%\bibitem[KS]{KS}
%C. \ Korff, C. Stroppel, \emph{The $sl(n)$-WZNW Fusion Ring: a combinatorial construction and a realisation as quotient of quantum cohomology},  Adv Math 225, 1 (2010) 200-268.

\bibitem[Ku]{Ku}
S. Kumar, \emph{Demazure character formula in arbitrary Kac-Moody setting}, Invent. Math. (1987) 89, 395-423 .

\bibitem[KNR]{KNR}
S. Kumar, M.S. Narasimhan, A. Ramanathan, \emph{ Infinite Grassmannians and moduli spaces of G-bundles},
Math. Annalen, (1994),  300, 41-75.

%\bibitem[LaPy]{LaPy}
%LAM AND PYLYAVSKYY

\bibitem[LP]{LP}
S. Lawton, E. Peterson, \emph{Computing $SL(2;C)$ Central Functions with Spin Networks}, Geometriae Dedicata, (2011), 153, 1, 73-105.

%\bibitem[Law]{Law}
%S. Lawton, \emph{Algebraic independence in $SL(3;C)$ character varieties of free groups}, J. Algebra, Vo. 324, 6, (2010) 1383-1391.

\bibitem[LS]{LS}
Y. Lazlo and C. Sorger, \emph{The line bundles on the moduli of parabolic G-bundles over curves and their sections}, Ann. Sci. Ecole Norm. Sup.
 (1997), 30, 4, 499-525.

\bibitem[L] {L}
E. Looijenga, \emph{Conformal blocks revisited}, arXiv:math/0507086v1 [math.AG]

%\bibitem[Lus]{Lus}
%G. \ Lusztig, \emph{Canonical bases arising from quantized enveloping algebras}, J. Amer. Math. Soc. 4 (1991) 356-421.


%\bibitem[M1]{M1}
%C. Manon, \emph{Presentations of semigroup algebras of weighted trees}, J. Algebraic Combin. 31 (2010), no. 4, 467–489.


%\bibitem[M3]{M3}
%C. Manon \emph{Gorenstein semigroup algebras of weighted trees}, J. Algebra (2012) Volume 354, Issue 1, 110–120.

\bibitem[M4]{M4}
C. Manon, \emph{The algebra of conformal blocks}, arXiv:0910.0577 [math.AG]

%\bibitem[M5]{M5}
%C. Manon \emph{Toric degenerations and tropical geometry of branching algebras} (2011), arXiv:1103.2484 [math.AG]

%\bibitem[M6]{M6}
%C. Manon \emph{The $m-$dissimilarity map and representation theory of $SL_m(\C),$}  J. Alg. Comb. 33, no. 2,(2011) 199–213.

\bibitem[M7]{M7}
C. Manon, \emph{ Coordinate rings for the moduli of $SL_2(\C)$ quasi-parabolic principal bundles on a curve and toric fiber products}, J. Algebra, (2011) 365, 1, 163-183.

%\bibitem[M8]{M8}
%C. Manon \emph{Dissimilarity maps on trees and the representation theory of $GL_n(\C)$} Elec. J. Combinatorics, V 19, 3 (2012).

\bibitem[M10]{M10}
C. Manon, \emph{The algebra of $SL_3(\C)$ conformal blocks}, Trans. Groups, (2013), 18, 4, 1165-1187.

%\bibitem[M11]{M11}
%C. Manon, \emph{Cox rings of moduli of quasi parabolic principal bundles and the K-Pieri rule}, arXiv:1309.5666 [math.AG]

%\bibitem[M13]{M13}
%C. Manon \emph{Phylogenetic trees and the tropical geometry of flag varieties}, FPSAC 2012, DMTCS proc, AR (2012)

%\bibitem[MZ]{MZ}
%C. Manon, Z. Zhou, \emph{Semigroups of $sl_3(\C)$ tensor product invariants} (2012), arXiv:1206.2529.

\bibitem[M16]{M16}
C. Manon, \emph{Toric geometry of $SL_2(\C)$ free group character varieties from outer space}, arXiv:1410.0072 [math.AG]

%\bibitem[MT]{MT}
%J. Martens, M. Thaddeus, \emph{Compactifications of reductive groups as moduli stacks of bundles} [WORKTODO]

%\bibitem[MeSe]{MeSe}
%V.B. Mehta, C.S. Sheshadri, \emph{Moduli of Vector Bundles on Curves with Parabolic Structures}, Math. Ann (1980),  248, 205-240.

%\bibitem[Mic]{Mic}
% M. Michalek, \emph{Constructive degree bounds for group based models},  J. Comb. Theory, Ser. A 120(7), (2013), 1672-1694.

%\bibitem[MS]{MS}
%J. Morse, A. Schilling, \emph{A combinatorial formula for fusion coefficient}, DMTCS proc AR (2012), 735-744.

\bibitem[Mu]{Mu}
 D.\ Mumford, {\em Geometric Invariant Theory},
Ergebnisse der Mathematik und Ihrer Grenzgebiete, (1965), 34, Springer.

\bibitem[MSW]{MSW}
G. Musiker, R. Schiffler, L. Williams, \emph{Bases for cluster algebras form surfaces}, Compositio Math. (2013) 149, 2, 217-263

\bibitem[NS]{NS}
M. S. Narasimhan, C. S. Seshadri, \emph{Stable and unitary vector bundles on a compact Riemann
surface}, Ann. of Math. (1965), 82, 540-567.

\bibitem[NaS]{NaS}
D. S. Nagaraj, C. S. Seshadri, \emph{Degenerations of the moduli spaces of vector bundles on curves. II. Generalized
Gieseker moduli spaces}, Proc. Indian Acad. Sci. Math. Sci. (1999), 109, 2, 165–201.

%\bibitem[PaSp]{PaSp}
%L. Pachter, D. Speyer, \emph{Reconstructing Trees From Subtree Weights}, Applied Math. Let. 17 (2004), 615 - 621

\bibitem[P]{P}
C. Pauly, \emph{ Espaces de modules de fibrés paraboliques et blocs conformes}, Duke Mathematical Journal (1996),  84, 217-235.

%\bibitem[P]{P} 
%S. Payne, \emph{Analytification is the limit of all tropicalizations}, Math. Res. Lett. 16, no. 3, (2009), 543-556.

%\bibitem[Polymake]{Polymake}
%E. Gawrilow and M. Joswig. \emph{Polymake: a framework for analyzing convex polytopes.} Polytopes—combinatorics and computation (Oberwolfach, 1997), 43–73, DMV Sem., 29, Birkhäuser, Basel, 2000.

\bibitem[PS]{PS}
J.H. Przytycki, A. Sikora, \emph{On Skein Algebras and $SL_2(\C)-$Character Varieties}, Topology (2000), 39, 1, 115-148.

%\bibitem[Pr]{Pr}
%PROCESI ON TRACE GENERATORS

\bibitem[RW]{RW} J. Rasmussen, M A Walton, \emph{Affine su(3) and su(4) fusion multiplicities as polytope volumes}, J. Phys A, (2002), 35, 32, 6939- 6952.

%\bibitem[RTW]{RTW}
% B. Remy, A. Thullier, A. Werner, \emph{A tropical view on Bruhat-Tits buildings and their compactifications.} Cent. Eur. J. Math. 9, no. 2 (2011), 390–402.

%\bibitem[RSS]{RSS}
%Q. Ren, S. Sam, B. Sturmfels, \emph{Tropicalization of classical moduli spaces}, arXiv:1303.1132 [math.AG]

%\bibitem[S1]{S1}
%A. Sikora, \emph{$SL_n$-Character Varieties as Spaces of Graphs} Trans. Amer. Math. Soc. 353,  no. 7, (2001), 2773-2804.

%\bibitem[S1]{S1}
%A. Sikora, \emph{Character varieties} Trans. Amer. Math. Soc. 364, no. 10, (2012), 5173 -5208.

%\bibitem[S2]{S2}
%SIKORA, GROBNER BASES

%\bibitem[SS]{SS}
%A. Schilling, M. Shimozono, \emph{Fermionic formulas for level-restricted generalized Kostka polynomials and coset branching}
%Commun. Math. Phys. 220 (2001) 105-164.

%\bibitem[Sp]{Sp}
%D. Speyer, \emph{Horn's Problem, Vinnikov Curves and Hives} Duke Journal of Mathematics 127 no. 3 (2005), p. 395-428. 

%\bibitem[SpSt]{SpSt}
%D.\ Speyer and B.\ Sturmfels, \emph{The tropical Grassmannian},
%Adv. Geom. 4, no. 3, (2004), 389-411.

%\bibitem[SW]{SW}
%D. Speyer, L. Williams, \emph{The Tropical Totally Positive Grassmannian}, J. Alg. Comb. Vol. 22, No. 2. (2005), 189-210.

%\bibitem[Sol]{Sol}
%P. Solis, \emph{A Wonderful embedding of the Loop Group}, arXiv:1208.1590 [math.AG]

\bibitem[St]{St}
B. \ Sturmfels, \emph{Grobner bases and convex polytopes}, Lecture notes series 8, AMS, Providence, Rhode Island, (1996).

%\bibitem[StSu]{StSu}
 % B. Sturmfels, S. Sullivant, \emph{Toric ideals of phylogenetic invariants}, J. Comp. Bio., 12, (2005), 204-228.

\bibitem[StV]{StV}
B. Sturmfels and M. Velasco, \emph{Blow-ups of $\mathbb{P}^{n-3}$ 
at $n$ points and spinor varieties}, J. Commutative Algebra (2010), 2, 223-244.

\bibitem[StXu]{StXu}
B. \ Sturmfels and Z. \ Xu, \emph{Sagbi Bases of Cox-Nagata Rings}, 
JEMS. (2010), 12,  2, 429-459.

%\bibitem[Su]{Su}
%S. Sullivant, \emph{Toric Fiber Products}, J. Algebra 316, no. 2 (2007), 560 -577.

%\bibitem[TW]{TW}
%C. \ Teleman and C. \ Woodward, \emph{Parabolic bundles, products of conjugacy classes, and Gromov-Witten invariants}, Annales d L'institut Fourier, 53, no 3, (2003), 713-748.

%\bibitem[Th]{Th}
%M. Thaddeus, \emph{Geometric Invariant Theory and Flips}, JAMS,Vo 9, No 3. (1996). 

%\bibitem[Tr]{Tr}
%T. Treloar, \emph{The symplectic geometry of polygons in the 3-sphere}, Canadian J. of Math. 54 (2002).

\bibitem[TUY]{TUY}
A. Tsuchiya, K. Ueno, and Y. Yamada, \emph{Conformal field theory on 
universal family of stable curves with gauge symmetries}, Adv. Studies in pure Math. (1989), 19, 459-566.

\bibitem[U]{U}
K. Ueno, \emph{Introduction to conformal field theory with gauge symmetry}, Fields Institute Monographs; Vo. 24. (2008).

%\bibitem[WSH]{WSH}
%D. Williamson, R. Skelton, J. Han, \emph{Equilibrium conditions of a tensegrity structure}, Int. J. Solids and Structures, 40 (2003) 6347-6367.

%\bibitem[Zh]{Zh}
%D. \ P. \ Zhelobenko, \emph{Compact Lie Groups and Their Representations},  American Mathematical Society, Providence,  (1973).

%[KMSW93] arXiv:hep-th/9203004 [pdf
%, ps, other]
%Can fusion coefficients be calculated from the depth rule ?
%A.N. Kirillov, P. Mathieu, D. Senechal, M. Walton
 
%[KMSW92] arXiv:hep-th/9209114 [pdf
%, ps, other]
%Crystallising the Depth Rule for WZNW Fusion Coefficients
%A.N. Kirillov, P. Mathieu, D. Sénéchal, M.A. Walton
 
%[RW01]  arXiv:hep-th/0105294 [pdf
%, ps, other]
%On the level-dependence of Wess-Zumino-Witten three-point functions
%Jørgen Rasmussen, Mark A. Walton
 
%[BMW92] arXiv:hep-th/920602 [pdf
%, ps, other]
%su(3)k fusion coefficients
%L. Begin, P. Mathieu, M.A. Walton

%@incollection {ERSS05,
   % AUTHOR = {Eriksson, N. and Ranestad, K. and Sturmfels, B. and Sullivant, S.},
     %TITLE = {Phylogenetic algebraic geometry},
 %BOOKTITLE = {Projective Varieties with Unexpected Properties},
    % PAGES = {237--255},
% PUBLISHER = {Walter de Gruyter GmbH \& Co. KG},
   %ADDRESS = {Berlin},
     % YEAR = {2005},
  % MRCLASS = {14Q99 (92B10 92D15)},
 % MRNUMBER = {2202256 (2006k:14119)},
%MRREVIEWER = {Eugenii Shustin},










\end{thebibliography}
\end{document}